\newcommand\Modified{April 2016}
\newcommand\datver[1]{\def\datverp%
{\par\boxed{\boxed{\text{Version: #1; Run: \today}}}}}
\newcommand{\etalchar}[1]{$^{#1}$}
\newtheorem{theorem}{Theorem}
\newtheorem{definition}[theorem]{Definition}
\newtheorem{corollary}[theorem]{Corollary}
\newtheorem{lemma}[theorem]{Lemma}
\newtheorem{remark}[theorem]{Remark}
\numberwithin{equation}{section}
\numberwithin{theorem}{section}
\definecolor{darkgreen}{cmyk}{1,0,1,.2}
\definecolor{m}{rgb}{1,0.1,1}
\renewcommand{\bar}{\overline}
\renewcommand{\hat}[1]{\widehat{#1}}
\newcommand{\rest}[1]{\big\rvert_{#1}} % restriction e.g. to boundary
\renewcommand{\tilde}{\widetilde}
\newcommand{\wt}[1]{\widetilde{#1}}
\newcommand{\Sig}[2]{\operatorname{Sig}^{\mathrm{an}}_{#1}(#2)}
\newcommand{\Sigt}[2]{\operatorname{Sig}^{\mathrm{top}}_{#1}(#2)}
\newcommand{\Che}[2]{\Omega^{\mathrm{Che}}_{#1}(#2)}
\renewcommand{\sc}[1]{\mathbf{ #1 }^{\bullet}} %sheaf complex
\newcommand\eps\varepsilon
\newcommand\lra{\longrightarrow}
\newcommand\xlra[1]{\xrightarrow{\phantom{x} #1 \phantom{x}}}
\newcommand\pa{\partial}
\newcommand\ie{\operatorname{ie}}
\newcommand\iie{\operatorname{iie}}
\newcommand\Ie{{}^{\ie}} 
\newcommand\Iie{{}^{\iie}}
\newcommand \Z {\mathbb{Z}}
\newcommand \R {\mathbb{R}}
\newcommand\CI{{\mathcal{C}}^{\infty}}
\newcommand\CIc{{\mathcal{C}}^{\infty}_c}
\newcommand{\lrpar}[1]{\left( #1 \right)}
\newcommand\ang[1]{\left\langle #1 \right\rangle}
\newcommand{\lrbrac}[1]{\left\lbrace #1 \right\rbrace}
\DeclareMathOperator*{\btimes}{\times} %To put B under the \times symbol
\newcommand{\an}{\operatorname{an}}
\newcommand{\Ch}{\operatorname{Ch}}
\newcommand{\dR}{\operatorname{dR}}
\newcommand\id{\operatorname{id}}
\newcommand\Id{\operatorname{Id}}
\renewcommand{\Im}{\operatorname{Im}}
\newcommand{\Image}{\operatorname{Image}}
\newcommand{\ind}{\operatorname{ind}}
\newcommand{\Ind}{\operatorname{Ind}}
\renewcommand{\mid}{\operatorname{mid}}
\newcommand{\pt}{\mathrm{pt}}
\newcommand{\reg}{ \mathrm{reg} }
\newcommand{\sign}{\operatorname{sign}}
\newcommand{\tp}{\operatorname{top}}
\newcommand{\calU}{{\mathcal U}}
\newcommand{\frakS}{{\mathfrak S}}
\newcommand{\ovl}{\overline}
\newcommand\Mand{\text{ and }}
\newcommand\Mforall{\text{ for all }}
\newcommand\Mforevery{\text{ for every }}
\newcommand\Mforsome{\text{ for some }}
\newcommand\Mhence{\text{ hence }}
\newcommand\Mif{\text{ if }}
\newcommand\Min{\text{ in }}
\newcommand\Mst{\text{ s.t. }}
\newcommand\Mwhenever{\text{ whenever }}
\newcommand\Mwith{\text{ with }}
\newcommand{\wh}{\widehat}
\DeclareMathAlphabet{\mathpzc}{OT1}{pzc}{m}{it}
\newcommand\paperintro%
\newcommand\paperbody%
\newcommand\bbB{\mathbb{B}}
\newcommand\bbC{\mathbb{C}}
\newcommand\bbN{\mathbb{N}}
\newcommand\bbQ{\mathbb{Q}}
\newcommand\bbR{\mathbb{R}}
\newcommand\bbS{\mathbb{S}}
\newcommand\bbZ{\mathbb{Z}}
\newcommand\cA{\mathcal{A}}
\newcommand\cC{\mathcal{C}}
\newcommand\cD{\mathcal{D}}
\newcommand\cF{\mathcal{F}}
\newcommand\cH{\mathcal{H}}
\newcommand\cI{\mathcal{I}}
\newcommand\cL{\mathcal{L}}
\newcommand\cN{\mathcal{N}}
\newcommand\N{\mathbb{N}}
\newcommand\cS{\mathcal{S}}
\newcommand\cT{\mathcal{T}}
\newcommand\cU{\mathcal{U}}
\newcommand\cW{\mathcal{W}}
\newcommand\sD{\mathscr{D}}
\newcommand\sG{\mathscr{G}}
\newcommand\sW{\mathscr{W}}
\newcommand\sX{\mathscr{X}}
\newcommand\tH{\operatorname{H}}
\newcommand\tK{\operatorname{K}}
\begin{document}

\title{The Novikov conjecture on Cheeger spaces}

\author{Pierre Albin}
\address{University of Illinois at Urbana-Champaign}
\email{palbin@illinois.edu}
\author{Eric Leichtnam}
\address{CNRS Institut de Math\'ematiques de Jussieu}
\author{Rafe Mazzeo}
\address{Department of Mathematics, Stanford University}
\email{mazzeo@math.stanford.edu}
\author{Paolo Piazza}
\address{Dipartimento di Matematica, Sapienza Universit\`a di Roma}
\email{piazza@mat.uniroma1.it}

\begin{abstract}
We prove the Novikov conjecture on oriented Cheeger spaces whose fundamental group satisfies the strong Novikov conjecture.
A Cheeger space is a stratified pseudomanifold admitting, through a choice of ideal boundary conditions, an $L^2$-de Rham cohomology theory satisfying Poincar\'e duality.
We prove that this cohomology theory is invariant under stratified homotopy equivalences and that its signature is invariant under Cheeger space cobordism.
Analogous results, after coupling with a Mischenko bundle associated to any Galois covering, allow us to carry out the analytic approach to the Novikov conjecture: we define higher analytic signatures of a Cheeger space and prove that they are stratified homotopy invariants whenever the assembly map is rationally injective. Finally we show that the analytic signature of a Cheeger space coincides with its topological signature as defined by Banagl.
\end{abstract}

\maketitle

\tableofcontents

%%%%%%%%%%%%%%%%%%%%%%%%
\paperintro
\section*{Introduction}
%%%%%%%%%%%%%%%%%%%%%%%%

Which expressions in the rational Pontryagin characteristic classes are homotopy invariant? Novikov showed that for a simply connected smooth manifold, $X,$ the only answer is the $L$-genus, which by Hirzebruch's theorem computes the signature
\begin{equation*}
	\ang{ \cL(X), [X] } = \sigma(X).
\end{equation*}
For a non-simply connected manifold, $X,$ we can construct the `higher signatures' in term of the classifying map $r:X \lra B\pi_1X,$
\begin{equation*}
	\{ \ang{ \cL(X) \cup r^*\alpha, [X] } : \alpha \in \tH^*(B\pi_1 X;\bbQ) \}  
\end{equation*}
and the Novikov conjecture is that these are homotopy invariant. % and there are no other homotopy invariant expressions.
One very fruitful approach to this conjecture uses functional analysis to `reduce' the conjecture to the {\em strong Novikov conjecture,} recalled below, which only involves the group $\pi_1 X.$ Although the general conjecture remains open, there are now many groups for which the conjecture is known, including discrete subgroups of finitely connected Lie groups, hyperbolic groups in the sense of Gromov, and many others. 
Beyond its explicit statement, the Novikov conjecture is one of the crucial open problems in the topology of manifolds because of its close connections to many open problems in topology, differential geometry, algebra, operator algebras, and representation theory. We refer the reader to e.g., \cite{Rosenberg:Review} for a short discussion of the Novikov conjecture and the references therein for a full discussion.

Because of the importance of the conjecture and the ubiquity of singular spaces, it is natural to ask for an analogous result on some class of singular spaces.
The first obstacle is that the cohomology of singular spaces need not satisfy Poincar\'e Duality and so a singular space may not have a signature.
Goresky and MacPherson introduced the intersection homology of stratified spaces to overcome this difficulty, and showed that it satisfied a `generalized Poincar\'e Duality'. Siegel singled out the class of stratified spaces, called `Witt spaces', for which the `middle perversity' intersection homology groups satisfy Poincar\'e Duality and hence define a signature. 

Intersection homology theory is not homotopy invariant, but it is invariant under stratified homotopy equivalences. Witt spaces have $L$-classes in homology, as shown by Goresky-MacPherson and Cheeger \cite{GM1, Cheeger:Spec, Siegel}, so the Novikov conjecture here is that the higher signatures of $\hat X$ are stratified homotopy invariant. In \cite{ALMP11} the authors proved that, whenever $\hat X$ is a Witt space whose fundamental group $\pi_1\hat X$ satisfies the strong Novikov conjecture, it also satisfies the Novikov conjecture.

There is a larger class of stratified spaces that have a signature extending the classical signature. Following work of MacPherson, Cheeger, and Morgan, Markus Banagl initiated the study of self-dual sheaf complexes compatible with intersection homology in \cite{BanaglShort}. He found a class of spaces `$L$-spaces' which admit a bordism invariant signature. In \cite{ALMP13.1} the authors found an analytic analogue of Banagl's theory, christened `Cheeger spaces' 
(using the concept of ideal boundary conditions, see Definition \ref{def:mezzo})  and studied their de Rham/Hodge theory. 
In this paper, we show that these spaces satisfy

\begin{theorem}
Every Cheeger space $\hat X$ has a signature, depending only on the stratified homotopy class of $\hat X$ and invariant under Cheeger space-cobordism.
If the fundamental group of $\hat X$ satisfies the strong Novikov conjecture, then $\hat X$ satisfies the Novikov conjecture.
\end{theorem}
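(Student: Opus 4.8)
The plan is to carry out the analytic approach to the Novikov conjecture, following the template established for Witt spaces in \cite{ALMP11}, now incorporating the choice of ideal boundary conditions (a mezzoperversity, Definition~\ref{def:mezzo}) that is the defining feature of a Cheeger space. The starting point is the de Rham/Hodge theory of \cite{ALMP13.1}: after resolving $\hat X$ and fixing an iterated incomplete edge (iie) metric, a mezzoperversity $\cW$ singles out a closed, self-adjoint extension $\eth_{\cW}$ of the de Rham operator whose Hodge cohomology is finite dimensional and satisfies Poincar\'e duality; in middle degree this produces a nondegenerate symmetric pairing and hence a signature $\sigma(\hat X)$. The first step is to promote this to $K$-theory: twisting $\eth_{\cW}$ by the Mishchenko bundle $\cE = \widetilde X \times_{\pi_1 X} C^*_r \pi_1 X$ yields a $C^*_r\pi_1 X$-Fredholm operator and an index class $\sigma(\hat X;\cE) \in K_*(C^*_r\pi_1 X)$, while forgetting the bundle gives a $K$-homology class $[\eth_{\cW}] \in K_*(\hat X)$ whose image under $K_*(\hat X) \lra K_*(B\pi_1 X) \xrightarrow{\mu} K_*(C^*_r\pi_1 X)$ recovers $\sigma(\hat X;\cE)$.

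Second, I would establish that these classes do not depend on the auxiliary choices. The space of admissible iie metrics is connected, and a path of such metrics produces a norm-continuous path of (twisted) Fredholm operators, so the index class is constant along it. Two mezzoperversities on $\hat X$ are joined by a mezzoperversity on $\hat X \times [0,1]$; this exhibits a Cheeger-space cobordism from $\hat X$ to itself, and an Atiyah--Patodi--Singer-type gluing/relative index argument shows that $\sigma(\hat X;\cE)$, and in particular the scalar $\sigma(\hat X)$, is unchanged. The same argument applied to a general Cheeger-space cobordism yields the cobordism invariance asserted in the theorem, so that $\sigma(\hat X;\cE)$ depends only on $\hat X$.

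Third --- and this is the crux --- I would prove stratified homotopy invariance of $\sigma(\hat X;\cE)$. Given a stratified homotopy equivalence $f\colon \hat X \to \hat X'$, the goal is to show that $f$ transports a mezzoperversity on $\hat X'$ to one on $\hat X$ and that the corresponding twisted signature index classes agree, $\sigma(\hat X;\cE) = \sigma(\hat X';\cE')$. I expect this to proceed through a stratified, $C^*$-algebraic version of the Hilsum--Skandalis argument: from $f$ one manufactures a bounded operator intertwining the two $L^2$ signature complexes modulo $C^*_r\pi_1 X$-compact errors and then deforms it to an explicit isomorphism on Hodge cohomology; alternatively, one realizes $f$ by an honest Cheeger-space cobordism built from a thickening of the mapping cylinder of $f$ (equipped with a compatible ie-structure and mezzoperversity) and invokes the cobordism invariance from the previous step. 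With homotopy invariance of $\sigma(\hat X;\cE)$ in hand, the Novikov conjecture follows in the standard way: the local index theorem identifies $\ch[\eth_{\cW}] \in \tH_*(\hat X;\bbQ)$ with the homology $\cL$-class of $\hat X$ (the interior contribution being the classical one, the mezzoperversity contributions cancelling against Poincar\'e duality), so the higher signatures $\ang{\cL(\hat X)\cup r^*\alpha,[\hat X]}$ are the pairings of $\alpha \in \tH^*(B\pi_1 X;\bbQ)$ with $r_*\ch[\eth_{\cW}]$; since $\mu$ is rationally injective by the strong Novikov hypothesis and $\sigma(\hat X;\cE) = \mu(r_*[\eth_{\cW}])$, stratified homotopy invariance of $\sigma(\hat X;\cE)$ forces that of $r_*\ch[\eth_{\cW}]$, hence of every higher signature.

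The main obstacle is the third step. On a Witt space the de Rham operator is essentially self-adjoint, there is no ideal boundary condition to choose, and the argument of \cite{ALMP11} applies almost verbatim; on a Cheeger space the mezzoperversity is genuine geometric data carried on the singular strata with no manifest functoriality under stratified homotopy equivalences. One must therefore show simultaneously that a mezzoperversity can be pulled back along $f$ and that the analytically defined signature is insensitive to this transport --- adapting the Hilsum--Skandalis construction to the iterated incomplete edge calculus, or constructing the Cheeger-space cobordism realizing $f$, is where the essential difficulty lies.
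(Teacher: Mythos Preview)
Your overall strategy matches the paper's: define the signature via a self-dual mezzoperversity, promote to $K$-theory via the Mishchenko bundle, prove metric- and mezzoperversity-independence via cobordism, prove stratified homotopy invariance via a Hilsum--Skandalis argument (including the pull-back of mezzoperversities), and deduce Novikov from rational injectivity of assembly together with $\ch[\eth_{\cW}]=\cL(\hat X)$. Two specific steps, however, do not go through as you describe them.

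First, your claim that ``two mezzoperversities on $\hat X$ are joined by a mezzoperversity on $\hat X\times[0,1]$'' is false for the product stratification: each singular stratum is then $Y^k\times[0,1]$ with link $Z^k$, and a flat subbundle of the vertical cohomology over $Y^k\times[0,1]$ is determined by its restriction to either end, so distinct Lagrangians at $t=0$ and $t=1$ cannot be interpolated. The paper (following Banagl) instead equips $\hat X\times[0,1]$ with a \emph{non-product} stratification in which each $Y^k$ contributes three strata $Y^k\times[0,1/2)$, $Y^k\times\{1/2\}$, $Y^k\times(1/2,1]$; the link at the middle slice is the unreduced suspension $SZ^k$, whose middle intersection homology vanishes, so that stratum is Witt and needs no boundary condition. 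The two given mezzoperversities then live independently on the disjoint halves. This suspension trick is the missing idea that makes mezzoperversity-independence work.

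Second, the identification $\ch[\eth_{\cW}]=\cL(\hat X)$ is \emph{not} obtained from a local index theorem; no such theorem is available on Cheeger spaces, and your parenthetical about ``mezzoperversity contributions cancelling'' presumes a heat-kernel expansion with boundary terms that has not been established. The paper instead \emph{defines} $\cL(\hat X)$ by Thom's procedure---pairing with $[\hat X,\bbS^{2q}]$ via signatures of transverse preimages $F^{-1}(N)$, which are themselves Cheeger spaces---and proves $\ch[\eth_{\cW}]=\cL(\hat X)$ by the Moscovici--Wu method: one first shows that $e^{it\eth_{\dR}}$ has finite propagation speed (a nontrivial point here, since multiplication by Lipschitz functions need not preserve $\cD_{\cW}$), then uses this to localize the Alexander--Spanier Chern character to a neighborhood of $F^{-1}(N)$, reducing the computation to a product $\bbS^{2q}\times F^{-1}(N)$ where it follows from a multiplicative formula.
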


There are many interesting ``natural" examples of Cheeger spaces.  For example, as studied by
Banagl and Kulkarni (\cite{Banagl-Kulkarni}), reductive Borel-Serre compactifications of Hilbert modular surfaces
admit self-dual boundary conditions. It is likely that Borel-Serre compactifications of locally symmetric spaces of 
higher $\bbQ-$rank should provide other examples.

\bigskip
In more details, we start in \S\ref{sec:Recall} by recalling the resolution of a stratified space with a Thom-Mather stratification, what we term a smoothly stratified space.
It is on these spaces that we have developed the Hodge/de Rham theory in \cite{ALMP13.1}.
These cohomology groups depend on a choice of geometric data at singular strata, which we refer to as a {\em mezzoperversity} because the resulting groups lie, in a certain sense, between the `lower middle perversity' and `upper middle perversity' groups of Goresky-MacPherson.

In \S\ref{sec:TwistedDeRham} we allow twisting by a flat vector bundle and consider the resulting de Rham operators.
Importantly, since we will carry out the `analytic approach' to the Novikov conjecture (see \cite{Rosenberg:Analytic}), we consider the effect of twisting the de Rham operator by a bundle of projective finitely generated modules over a $C^*$-algebra. Indeed, if $\Gamma$ is a countable, finitely generated, finitely presented group, and $\hat X' \lra \hat X$ a $\Gamma$ covering, then the action of $\Gamma$ on $\hat X'$ and on $C^*_r\Gamma,$ the reduced $C^*$-algebra of $\Gamma,$ produce such a  flat bundle over $\hat X$. 
It is denoted by
$$
\sG(r) = \frac{\hat{X}' \times C^*_r\Gamma}{\Gamma}\, ,
$$
where $r:\hat X \lra B\Gamma$ is the classifying map of the $\Gamma$ covering. We denote   the corresponding `higher' de Rham operator by $\eth_{\dR}^{\sG(r)}.$ We prove that for any choice of mezzoperversity there is an associated domain for $\eth_{\dR}^{\sG(r)}$ which is closed, self-adjoint, and $C^*_r\Gamma$-compactly included in $L^2(X; \Lambda^* \Iie T^*X \otimes \sG(r)).$

In \S\ref{HomotopyInv} we show that the cohomology associated to a mezzoperversity, possibly twisted by a flat bundle, is a stratified homotopy invariant.
The main difficulty is that if $F: \hat X \lra \hat M$ is a stratified homotopy equivalence, the corresponding pull-back of differential forms is generally {\em not a bounded operator} on $L^2$-differential forms. We get around this by using the `Hilsum-Skandalis replacement' of $F^*,$ $HS(F),$ which is a bounded map on $L^2$-forms. The trade-off is that we must work much harder to understand how this replacement map behaves on the domains of the twisted de Rham operators.

The constructions of these sections all work in the general setting of smoothly stratified spaces, but the definition of a signature, and higher signatures, requires a more restrictive category of spaces, which we call Cheeger spaces. In section \ref{sec:Sign} we recall the definition of a Cheeger space and its signature. At first, this signature is defined using a self-dual mezzoperversity and an adapted metric.
We prove an analytic analogue of a cobordism result of Banagl \cite{Banagl:LClasses} and use it to show that the signature is independent of both the mezzoperversity and the metric. Together with the stratified homotopy invariance of the de Rham cohomology, this shows that the signature only depends on the stratified homotopy class of a Cheeger space, and is invariant under Cheeger space-cobordism. 

We also use the results of \cite{ALMP13.1} to show that the signature operator with domain given by a self-dual mezzoperversity defines a K-homology class. The 
higher signature operator $\eth_{\sign}^{\sG(r)}$ defines a class in $\tK_* (C^*_r\Gamma)$ and we show that this class, its $C^*_r \Gamma$-index, is related to the K-homology class of the `lower' signature operator via the `assembly map' $\beta.$

In \S\ref{sec:HigherSign} we follow Thom's procedure to define a homology $L$-class using the signature of submanifolds or, in our case, subspaces transversal to the stratification. We extend a result of Moscovici-Wu from the setting of Witt spaces to show that the Chern character of the K-homology class of the `lower' signature operator is equal to this $L$-class. Using this class we define the higher signatures of a Cheeger space and we prove that, whenever the assembly map $\beta$ is rationally injective, the higher signatures are stratified homotopy invariant, i.e., the Novikov conjecture. %This proves completely Theorem 1.

Finally in section \S\ref{sec:RefTop} we prove that the analytic signature of a Cheeger space coincides with its topological signature as defined by Banagl.

We rely throughout on the analytic techniques developed and explained in \cite{ALMP11}. There are other approaches to
analysis on stratified spaces, though none to our knowledge tailored so well to the types of geometric problems of interest here. 
We refer the reader to \cite{Nazarov-Plamenevsky} and references therein where closely related ideas are developed.

\medskip

\noindent {\bf Acknowledgements.} 
P.A. was partly supported by NSF Grant DMS-1104533 and an IHES visiting position and thanks Sapienza
Universit\`a di Roma, Stanford, and Institut de Math\'ematiques de Jussieu for their hospitality and support.
R.M. acknowledges support by NSF Grant DMS-1105050.
E.L. thanks Sapienza Universit\`a di Roma
for hospitality during several week-long visits;
financial support was  provided  by Istituto Nazionale di Alta Matematica (INDAM) and CNRS
through the bilateral project ``Noncommutative Geometry".
P.P. thanks the {\it Projet Alg\`ebres d'Op\'erateurs} of {\it Institut
de Math\'ematiques de Jussieu}
for hospitality during several short visits and a two months long visit
 in the Spring of 2013; financial support was
provided by Universit\'e Paris 7, Istituto Nazionale di Alta Matematica (INDAM) and CNRS
(through the bilateral project ``Noncommutative geometry")
and Ministero dell'Universit\`a e della Ricerca Scientifica
(through the project ``Spazi di Moduli e Teoria di Lie").
\\

The authors are happy to thank Markus Banagl, Jochen Br\"uning, Jeff Cheeger, Greg Friedman,
Michel Hilsum and Richard Melrose for many useful and interesting discussions.
Thanks are also due to the referee for a careful reading of the original manuscript and for interesting remarks.

%%%%%%%%%%%%%%%%%%%%%%%%
\paperbody
\section{Smoothly stratified spaces, iie metrics, and mezzoperversities} \label{sec:Recall}
%%%%%%%%%%%%%%%%%%%%%%%%

\subsection{Smoothly stratified spaces}
There are many notions of stratified space \cite{Kloeckner}.
Most of them agree that {\bf a stratified space} $\hat X$ is a locally compact, second countable,
Hausdorff 
 topological space endowed with a locally finite (disjoint) decomposition 
\begin{equation*}
	\hat X = Y^0 \cup Y^1 \cup \ldots Y^T
\end{equation*}
whose elements, called strata, are locally closed and verify the {\em frontier condition}: 
\begin{equation*}
	Y^j \cap \bar{Y^k} \neq \emptyset \implies Y^j \subseteq \bar{Y^k}.
\end{equation*}
The depth of a stratum $Y$ is the largest integer $k$ such that there is a chain of strata
$Y = Y^k, \ldots, Y^0$ with $Y^j \subset \overline{Y^{j-1}}$ for $1 \leq j \leq k$. A stratum of maximal 
depth is always a closed manifold. The maximal depth of any stratum in $\hat X$ is called the depth of $\hat X$ as 
a stratified space. Thus a stratified space of depth $0$ is a smooth manifold with no singularity.

Where the definitions differ is on how much regularity to impose on the strata and how the strata `fit together'. 
In this paper we shall mainly consider  {\it smoothly stratified pseudomanifolds} with  {\it Thom-Mather control data}.
We proceed to recall the definition, directly taken from \cite{BHS} and \cite{ALMP11}.

\begin{definition}
A smoothly stratified space of depth $0$ is a closed manifold. Let $k \in \N$, assume that 
the concept of smoothly stratified space of depth $\leq k$ has been defined.
A smoothly stratified space $\hat X$ of depth $k+1$ is a  locally compact, second countable Hausdorff space which admits a locally finite 
decomposition into a union of locally closed {\rm strata} $\frakS = \{Y^j\}$, 
where each $Y^j$ is a smooth (usually open) manifold, with dimension depending on the index $j$. 
We assume the following:

\begin{itemize}
\item[i)] If $Y^i, Y^j \in \frakS$ and $Y^i \cap \overline{Y^j} \neq \emptyset$,
then $Y^i \subset \overline{Y^j}$. 
\item[ii)] Each stratum $Y$ is endowed with a set of `control data' $T_Y$, $\pi_Y$ and $\rho_Y$;
here $T_Y$ is a neighbourhood of $Y$ in $X$ which retracts onto $Y$, $\pi_Y: T_Y \longrightarrow Y$ is 
a fixed continuous retraction and $\rho_Y: T_Y \to [0,2)$ is a  `radial function'  \footnote{In our previous paper
\cite{ALMP11}
we inexplicably required $\rho_Y$ to be proper; this is of course wrong, as it would imply  all strata to be compact} 
tubular neighbourhood such that $\rho_Y^{-1}(0) = Y$. Furthermore, we require that if $Z \in \frakS$ and 
$Z \cap T_Y \neq \emptyset$, then 
\[
(\pi_Y,\rho_Y): T_Y\cap Z \longrightarrow Y \times [0,2)
\]
is a proper differentiable submersion. 
\item[iii)] If $W,Y,Z \in \frakS$, and if $p \in T_Y \cap T_Z \cap W$ and $\pi_Z(p) \in T_Y \cap Z$,
then $\pi_Y(\pi_Z(p)) = \pi_Y(p)$ and $\rho_Y(\pi_Z(p)) = \rho_Y(p)$.
\item[iv)] If $Y,Z \in \frakS$, then
\begin{eqnarray*}
Y \cap \ovl{Z} \neq \emptyset & \Leftrightarrow & T_Y \cap Z \neq \emptyset, \\
T_Y \cap T_Z \neq \emptyset & \Leftrightarrow & Y \subset \ovl{Z}, \ Y=Z\ \  \mbox{or}\ Z \subset \ovl{Y}.
\end{eqnarray*}
\item[v)] There exist a family of smoothly stratified spaces (with Thom-Mather control data) of depth less than or
equal to $k$, indexed by $\frakS$, $\{L_Y, Y\in \frakS\}$, with the property that the restriction $\pi_Y: T_Y \to Y$ is a 
locally trivial fibration with 
fibre the cone $C(L_Y)$ over  $L_Y$ (called the link over $Y$), with atlas $\calU_Y = 
\{(\phi,\calU)\}$ where each $\phi$ is a trivialization $\pi_Y^{-1}(\calU) \to \calU \times C(L_Y)$, and the 
transition functions are stratified isomorphisms  of $C(L_Y)$ which preserve the rays of 
each conic fibre as well as the radial variable $\rho_Y$ itself, hence are suspensions of isomorphisms of 
each link $L_Y$ which vary smoothly with the variable $y \in \calU$. 
\end{itemize}

If in addition we let $\hat X^j$ be the union of all strata of dimensions less than or equal to $j$, and
require that 
\begin{itemize}
\item[vi)] $\hat X = \hat X^n \supseteq \hat X^{n-1} = \hat X^{n-2} \supseteq \hat X^{n-3} \supseteq \ldots \supseteq \hat X^0$ and
$\hat X \setminus \hat X^{n-2}$ is dense in $\hat X$
\end{itemize}
then we say that $\hat X$ is a stratified pseudomanifold. 
\end{definition}
The depth of a stratum $Y$ is the largest integer $k$ such that there is a chain of strata
$Y = Y^k, \ldots, Y^0$ with $Y^j \subset \overline{Y^{j-1}}$ for $1 \leq j \leq k$. A stratum of maximal 
depth is always a closed manifold. The maximal depth of any stratum in $\hat X$ is called the depth of $\hat X$ as 
a stratified space.

We refer to the dense open stratum of a stratified pseudomanifold $\wh{X}$ as its regular set,
and the union of all other strata as the singular set,
\[
\mathrm{reg}(\wh{X}) := \wh{X}\setminus \mathrm{sing}(\wh{X}), \qquad \mathrm{where}\qquad
\mathrm{sing}(\wh{X}) = \bigcup_{{Y\in \mathfrak S}\atop{\mathrm{depth}\, Y > 0}} Y.
\]
In this paper, we shall often for brevity refer to a smoothly stratified pseudomanifold with Thom-Mather
control data as a stratified space. When a distinction is needed we use the label topologically stratified
for a space satisfying the pared down conditions listed at the beginning of this section.

\smallskip
\noindent
{\bf Remarks.}\\
{\bf (i)} If $X$ and $X'$ are two stratified spaces, a stratified isomorphism between them is 
a homeomorphism $F: X \to X'$ which carries the open strata of $X$ to the open strata of $X'$ 
diffeomorphically, and such that $\pi_{F(Y)}^\prime \circ F = F \circ \pi_Y$, $\rho_Y^\prime 
= \rho_{F(Y)} \circ F$ for all $Y \in \frakS(X)$. \\
{\bf (ii)} 
%If $Z$ is any stratified space, then the cone over $Z$, denoted $C(Z)$, is the space $Z \times \RR^+$ 
%with $Z \times \{0\}$ collapsed to a point. This is a new stratified space, with depth one greater than $Z$
%itself. The vertex $0 := Z \times \{0\} / \sim $ is the only maximal depth stratum; $\pi_0$ is the natural
%retraction onto the vertex and $\rho_0$ is the radial function of the cone. \\
%{\bf (iii)}
%There is a small generalization of the coning construction. 
For any $Y \in \mathfrak{S}$, 
let $S_Y = \rho_Y^{-1}(1)$. This is the total space of a fibration $\pi_Y: S_Y \to Y$ with fibre $L_Y$. 
Define the mapping cylinder over $S_Y$ by $\mathrm{Cyl}\,(S_Y,\pi_Y) = S_Y \times [0,2)\, /\sim$ where 
$(c,0) \sim (c',0)$ if $\pi_Y(c) = \pi_Y(c')$. The equivalence class of a point $(c,t)$ is sometimes 
denoted $[c,t]$, though we often just write $(c,t)$ for simplicity. Then there is a stratified isomorphism 
\[
F_Y: \mathrm{Cyl}\,(S_Y,\pi_Y)  \longrightarrow T_Y;
\]
this is defined in the canonical way on each local trivialization $\calU \times C(L_Y)$ and
since the transition maps in axiom v) respect this definition, $F_Y$ is well-defined.\\
{\bf (iii)} For more on axiom (v) we refer the reader to \cite{ALMP11} and references therein.

\medskip

As an example, consider a
 stratified space with a single singular stratum $Y \subseteq \hat X$;  the control data induces a {\em smooth} fibration of $\cT \cap \{ \rho = \eps \}$ over $Y$ with fiber $Z,$ and the neighborhood $\cT$ itself is homeomorphic to the mapping cylinder of this smooth fibration. Another way of thinking about this is to note that the set $\wt X = \hat X \setminus \{ \rho \leq \eps \}$ is a smooth manifold with boundary, its boundary fibers over $Y$ with fiber $Z,$ and the interior of $\wt X$ is diffeomorphic to the regular part of $\hat X.$ We call $\wt X$ the {\em resolution} of $\hat X.$ There is a natural map
\begin{equation*}
	\beta: \wt X \lra \hat X
\end{equation*}
given by collapsing the fibers of the boundary fibration.
Changing $\eps$ yields diffeomorphic spaces, so a more invariant way of making this construction is to replace $Y$ in $\hat X$ with its inward-pointing spherical normal bundle, a process known as the `radial blow-up of $\hat X$ along $Y.$'

This process of replacing $\hat X$ by $\wt X$ is already present in Thom's seminal work \cite{Thom:Ensembles}, and versions of it have 
appeared in Verona's `resolution of singularities' \cite{Verona} and the `d\'eplissage' of Brasselet-Hector-Saralegi \cite{BHS}. These 
constructions show that any stratified space as described above (a Thom-Mather stratification) can be resolved to a smooth manifold, 
possibly with corners. Our version involves modifying $\hat X$ through a sequence of radial blow-ups to obtain a manifold with corners 
$\wt X$, the boundaries of which carry coherent iterated fibrations.  This structure was identified by Richard Melrose, and 
described carefully in \cite[\S 2]{ALMP11}:

\begin{definition}\label{def:ifs}
Let $\wt{X}$ be a manifold with corners, and enumerate its boundary hypersurfaces as $\{H_\alpha\}$ over
some finite index set $\alpha \in A$. An iterated fibration structure on $\wt{X}$ consists of the following data: 
\begin{itemize}
\item[a)] Each $H_\alpha$ is the total space of a fibration $\phi_\alpha: H_\alpha \to \wt Y_\alpha$,
where the fibre $Z_\alpha$ and base $\wt Y_\alpha$ are themselves manifolds with corners.
\item[b)] If two boundary hypersurfaces meet, i.e.\ $H_{\alpha \beta} := H_\alpha \cap H_\beta \neq 
\emptyset$, then $\dim Z_\alpha \neq \dim Z_\beta$.
\item[c)] If $H_{\alpha \beta} \neq \emptyset$ as above, and $\dim Z_\alpha < \dim Z_\beta$, then the 
fibration of $H_\alpha$ restricts naturally to $H_{\alpha\beta} = H_\alpha \cap H_\beta$ (i.e.\ the leaves of the fibration of 
$H_\alpha$ which intersect the corner lie entirely within the corner) to give a fibration of 
$H_{\alpha \beta}$ with fibres $Z_\alpha$, whereas the larger fibres $Z_\beta$ must be transverse to 
$H_\alpha$ at $H_{\alpha\beta}$. Writing $\partial_\alpha Z_\beta$ for the boundaries of these fibres at the 
corner, i.e.\ $\partial_\alpha Z_\beta := Z_\beta \cap H_{\alpha\beta}$, then $H_{\alpha \beta}$ is also the total 
space of a fibration with  fibres $\partial_\alpha Z_\beta$. Finally, we assume that the fibres
$Z_\alpha$ at this corner are all contained in the fibres $\partial_\alpha Z_\beta$, and in fact that
each fibre $\partial_\alpha Z_\beta$ is the total space of a fibration with fibres $Z_\alpha$. 
\end{itemize}
\end{definition}

\bigskip

The constructions in \cite{BHS} and \cite{ALMP11} show that any space $\hat X$ with a Thom-Mather stratification has a {\bf resolution} to a  manifold with corners $\wt X$ carrying an iterated fibration structure and a `blow-down map'
\begin{equation*}
	\beta: \wt X \lra \hat X
\end{equation*}
which is continuous, surjective, and restricts to a diffeomorphism between the interior of $\wt X$ and the regular part of $\hat X.$
There is a useful iterative structure to this resolution construction. Indeed, the closure $\bar Y$ of each singular stratum 
$Y \subseteq \hat X$ inherits the structure of a stratified space and the fibrations of the boundary faces in $\wt X$ have as 
bases the resolutions of these spaces, and the associated fibers are the resolutions of the links of the singular strata. 
For a normal stratified space (i.e., one whose links are connected) the boundary hypersurfaces of $\wt X$ are in one-to-one 
correspondence with the singular strata of $\hat X.$  If the link at a stratum is not connected, then the blow-up of that stratum
produces more than one boundary hypersurface (cf. \cite[Remark 2.4]{ALMP11}) and one should work with `collective boundary 
hypersurfaces' as in \cite{Albin-Melrose:Gps}. This makes only a notational difference, so we assume for simplicity henceforth
 that the boundary hypersurfaces of $\wt X$ and the singular strata of $\hat X$ are in one-to-one correspondence.

There are many advantages of working with $\wt X$ over $\hat X,$ especially for doing analysis (see \cite{ALMP11, ALMP13.1}).
One advantage is that the resolution $\wt X$ picks out a particularly well-behaved subset of functions on $\hat X.$
\begin{definition}\label{def:SmoothFun}
Given a smoothly stratified space $\hat X$ with resolution $\wt X,$ and a smooth manifold $M,$ let
\begin{equation*}
	\CI(\hat X, M) = \{ f \in \cC^0(\hat X, M): f \circ \beta \in \CI(\wt X, M) \}.
\end{equation*}
We abbreviate $\CI(\hat X,\bbR) = \CI(\hat X).$

If $\hat X$ and $\hat M$ are both smoothly stratified spaces, we define
\begin{equation*}
	\CI(\hat X, \hat M) = \{ f \in \cC^0(\hat X, \hat M) : f^*\CI(\hat M) \subseteq \CI(\hat X) \}.
\end{equation*}
\end{definition}
There is a partition of unity of $\hat X$ consisting entirely of functions from $\CI(\hat X)$ \cite{ABLMP}.
We point out that 
\begin{equation*}
	f \in \CI(\hat X, \hat M) \iff \exists \wt f \in \CI(\wt X, \wt M) \Mst f \circ \beta_X = \beta_M \circ \wt f
\end{equation*}
and% \footnote{\blue{This equation was previously 
% %
% \begin{equation*}
% 	f \in \CI(\hat X) 
% 	\iff f = \wt f \circ \beta \Mforsome \wt f \in \CI_{\Phi}(\wt X) = \{ \wt f \in \CI(\wt X) : \forall  H \exists f_H \in \CI(Y_H) \Mst i_H^*\wt f = \phi_H^*f_H \}
% \end{equation*}
% %
% which is wrong.}}
%
\begin{equation*}
	f \in \CI(\hat X)
	\iff f \circ \beta \in \CI_{\Phi}(\wt X) = \{ \wt f \in \CI(\wt X) : \ \forall \,  H \ \exists \, f_H \in \CI(Y_H) \Mst i_H^*\wt f = \phi_H^*f_H \}
\end{equation*}
where $i_H: H \rightarrow \wt X$ denotes the natural embedding.

\begin{definition}\label{def:SmoothBFun}
Our convention is that a manifold with corners has embedded boundary hypersurfaces (bhs). Thus if $H$ is a boundary hypersurface 
of $\wt X$ then there is a {\em boundary defining function} for $H,$ $r_H,$ meaning that $r_H$ is a non-negative smooth function 
on $\wt X,$ $H = r_H^{-1}(0),$ and $dr_H$ does not vanish on $H.$ A map between manifolds with corners $F: \wt X \lra \wt M$ is 
called a {\bf $b$-map} if, for some (hence any) choice of boundary defining functions $\{r_{i}\}$ of $\wt X$ and $\{ \rho_j\}$ of $\wt M,$ 
we have
\begin{equation*}
	F^*(\rho_j) = h_j \prod r_i^{e(j,i)} 
\end{equation*}
with $h_j$ a smooth non-vanishing function on $\wt X$ and $e(i,j) \in \bbN_0,$ for all 
$i,j$ \cite{Melrose:Conormal}, \cite[(A.11)]{Mazzeo:Edge}. 
\end{definition}

Thus if $H$ and $K$ are boundary hypersurfaces of $\wt X$ and $\wt M$, respectively, then the image of the interior of $H$ under a 
$b$-map $F$ is either contained in or else disjoint from $K,$ and moreover, the order of vanishing of the differential of $F$ 
along $H$ is constant. 

A {\bf smooth $b$-map between stratified spaces} $f \in \CI_b(\hat X, \hat M)$ is a smooth map whose lift $\wt f \in \CI(\wt X, \wt M)$ is a $b$-map of manifolds with corners.

By an {\bf isomorphism} of smoothly stratified spaces we mean an invertible smooth $b$-map whose inverse is also a smooth $b$-map

\begin{remark}
In \cite[Definition. 4, pg 258]{ALMP11}  our notion of smooth map between stratified spaces is what here we are calling a smooth $b$-map between stratified spaces.

As explained in \cite{ALMP11}, a smoothly stratified space is a stratified space whose cocycles take values in stratified isomorphisms.
\end{remark}

We recall other classes of maps we will use so as to have all of their definitions in one place.
\begin{definition}\label{def:StratPreserving}
A continuous map between topologically stratified spaces $f:\hat X\lra \hat M$ is {\bf stratum-preserving} if
\begin{equation*}
	T \in \cS(\hat M) \implies f^{-1}(T) = \bigcup S_i, \quad S_i \in \cS(\hat X).
\end{equation*}
Equivalently if the image of a stratum of $\hat X$ is contained in a stratum of $\hat M.$
\end{definition}
A smooth $b$-map between smoothly stratified spaces is necessarily stratum preserving.

\begin{definition}\label{def:CodPreserving} [{cf. \cite[Def. 4.8.4]{Kirwan-Woolf}}]
A stratum preserving continuous map between topologically stratified spaces $f:\hat X\lra \hat M$ is {\bf codimension-preserving} if
\begin{equation*}
	T \in \cS(\hat M) \implies \mathrm{codim} f^{-1}(T) = \mathrm{codim} T.
\end{equation*}
If $\hat X$ and $\hat M$ are smoothly stratified, we denote the set of smooth codimension preserving $b$-maps between them by $\CI_{b,cod}(\hat X, \hat M).$

A {\bf stratified homotopy equivalence} between two topologically stratified spaces is a pair of codimension preserving maps $f: \hat X \lra \hat M$ and $g: \hat M \lra \hat X$ such that $g\circ f$ and $f\circ g$ are homotopic to the identity through codimension-preserving homotopies $\hat X \times [0,1] \lra \hat X$ and $\hat M \times [0,1] \lra \hat M,$ respectively. 
A {\bf smooth stratified homotopy equivalence} between two smoothly stratified spaces is a stratified homotopy equivalence with $f \in \CI_{b,cod}(\hat X, \hat M),$ $g \in \CI_{b,cod}(\hat M, \hat X)$ and with $f\circ g$ and $g \circ f$ homotopic to the identity through smooth codimension preserving $b$-maps.
\end{definition}
This notion of stratified homotopy equivalence is that used in \cite[Proposition 2.1]{Friedman:StratFib}, where they were shown to preserve intersection homology. We will show below that smooth stratified homotopy equivalence preserve refined intersection homology groups.

We will show in the next section that smoothly stratified spaces admit a smooth stratified homotopy equivalence if and only if they admit a continuous stratified homotopy equivalence.

\subsection{Approximation of continuous functions}

Another advantage of working with $\wt X$ is that we can make use of all of the standard machinery of smooth differential geometry.
Since our object of study is $\hat X$ it is important to make constructions consistent with the iterated fibration structure of $\wt X.$
Thus, an {\bf iterated wedge metric} (or {\bf iterated incomplete edge} or {\bf iie} metric) is a Riemannian metric on $\wt X$ that degenerates at each boundary hypersurface by collapsing the fibers.
Inductively, an $\iie$ metric $g$ on a smooth manifold is a Riemannian metric, and on a general manifold $\wt X$ with an iterated fibration structure is one that near the boundary hypersurface $H$ has the form
\begin{equation*}
	dx^2 + x^2g_{\wt Z} + \phi_H^*g_{Y_H}
\end{equation*}
where we have trivially extended the fibration $\phi_H$ to a collar neighborhood of $H.$
Here $x$ is a boundary defining function for $H,$ meaning a smooth non-negative function on $\wt X$ vanishing linearly on its zero set, $H,$ $g_{Y_H}$ is a metric on the base of the fibration and $g_{\wt Z}$ restricts to each fiber of $\phi_H$ to be an $\iie$ metric on $\wt Z.$ (See \cite{ALMP11} for more details; in this paper we will work exclusively with {\em rigid} $\iie$ metrics like these without further comment.) 
% For later reference, we record also the associated metric
% \begin{equation*} \label{eq:ie}
% \frac{dx^2 }{ x^2}+ g_{\wt Z} + \frac{\phi_H^*g_{Y_H}}{x^2}\,.
% \end{equation*}
% This is `partially' complete, i.e., complete in the $x$-direction but not in $Z$ except in the depth $1$ case where $Z$ is smooth. 

Using $\iie$ metrics to measure distances, we next show that appropriate continuous maps can be approximated by smooth $b$-maps 
in the same homotopy class.

\begin{theorem} \label{thm:SmoothApp}
Let $\hat X$ and $\hat M$ be two smoothly stratified spaces and
\begin{equation*}
	h_0:\hat X \lra \hat M 
\end{equation*}
a continuous stratum-preserving map between them. For any $\eps>0,$ there is a smooth $b$-map $h_1 \in \CI_b(\hat X, \hat M)$
and a continuous homotopy
\begin{equation*}
	H: \hat X \times [0,1] \lra \hat M
\end{equation*}
 such that:\\
i) $H_0 = h_0,$ $H_1 = h_1,$ \\
ii) $H_t$ is stratum-preserving for each $t \in [0,1]$, \\ iii) If $h_0$ preserves the codimension of each stratum, 
then so does each $H_t$, $t\in [0,1]$.
\end{theorem}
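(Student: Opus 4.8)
The plan is to argue by induction on the depth of $\hat X,$ at each stage converting the problem of smoothing a stratified map into a finite--dimensional Euclidean approximation problem by means of the embedding Lemma~\ref{lem:Embedding}. To set up, resolve both spaces and apply Lemma~\ref{lem:Embedding} to $\wt M,$ fixing an embedding $e_M\colon \wt M\hookrightarrow\bbR^{N,\ell},$ a neighbourhood $\cU_M$ of $e_M(\wt M),$ and a retraction $R_M\colon\cU_M\to e_M(\wt M)$ satisfying the compatibility squares \eqref{eq:EmbeddingDiag}; the point of those squares is that $R_M$ intertwines the Euclidean boundary coordinates $x_i$ and the orthogonal projections $p_i$ with the boundary defining functions $r_i$ and the fibrations $\phi_i$ of the iterated fibration structure of $\wt M.$ On the source, fix collars $[0,\eps)_{x_i}\times H_i$ of the boundary hypersurfaces of $\wt X$ compatible with its iterated fibration structure, together with a partition of unity of $\hat X$ by functions in $\CI(\hat X)$ (which exists by \cite{ABLMP}).

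The heart of the argument is the construction of $G,$ which I would carry out locally and then patch. Since $g$ is stratum-preserving, for each boundary hypersurface $H_i$ of $\wt X$ the composite $g\circ\beta_X|_{H_i}$ factors through $\phi_i\colon H_i\to Y^i$ as $g|_{Y^i}\circ\phi_i,$ where $Y^i=\beta_X(H_i);$ moreover the restriction $g|_{\overline{Y^i}}\colon\overline{Y^i}\to\hat M$ is again continuous and stratum-preserving and its domain has strictly smaller depth than $\hat X,$ so by the inductive hypothesis it is $\eps$-close to a smooth $b$-map $G^i$ (through a stratum-preserving homotopy). I would then use the collar of $H_i$ to push $\phi_i^*G^i$ a short distance into $\hat X,$ and interpolate --- via the collar cutoffs and the partition of unity, and compatibly at corners of all codimensions using the inductive control on the $G^i$ attached to the deeper strata --- between these near-stratum models and an interior smooth approximation of $g$ on the compact core of $\reg\hat X.$ The interpolation and the interior smoothing are performed componentwise in $\bbR^{N,\ell}$ after composing with $e_M$ (locally, over $\eps$-balls, so that the relevant link bundles of $\hat M$ trivialize and one works in a product model) and are then projected back with $R_M$; the boundary-coordinate components, on the other hand, are prescribed by hand as monomials in the boundary defining functions of $\wt X$ matched to those of $\wt M$ according to how $g$ sends strata, which is exactly what makes the outcome a $b$-map in the sense of Definition~\ref{def:SmoothBFun}. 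The squares \eqref{eq:EmbeddingDiag}, together with the inductive choices, then guarantee that the resulting map commutes with the full tower of fibrations of Definition~\ref{def:ifs}, hence descends to a genuine smooth $b$-map $G\in\CI_b(\hat X,\hat M),$ whose lift is $\wt G=R_M\circ(\text{componentwise approximant}).$

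For the homotopy $H$ I would assemble the inductively supplied homotopies $\phi_i^*H^i$ in the collars with the affine homotopy in $\bbR^{N,\ell}$ between $e_M$ applied to $g$ and the componentwise approximant, glue them with the same cutoffs, compose with $R_M,$ and push down to $\hat M;$ choosing the approximation fine enough keeps every stage inside $\cU_M,$ so $H$ is a well-defined continuous homotopy with $H_0=g$ and $H_1=G.$ Because $R_M$ and the $p_i$ respect the stratification of $\wt M,$ and because the prescribed boundary-coordinate components vanish on exactly the loci forced by $g,$ each $H_t$ carries a stratum of $\hat X$ into a single stratum of $\hat M$ --- note this does \emph{not} make $H$ codimension-preserving as a map on $\hat X\times[0,1],$ only that each $H_t$ is stratum-preserving, as claimed. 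Finally, the estimate $d_M(H_t(\zeta),g(\zeta))<\eps$ follows by taking the Euclidean approximation small and using the local Lipschitz continuity of $R_M,$ together with the fact that inside the tubular neighbourhoods, where the whole construction takes place, the $\iie$ metric has its product normal form, so Euclidean $\delta$-closeness there controls $d_M.$

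The step I expect to be the main obstacle is not the analysis --- smooth approximation, mollification, and the distance bound are routine --- but the bookkeeping needed to make $G$ a $b$-map compatible with the \emph{entire} iterated fibration structure of $\wt M$ while keeping every $H_t$ stratum-preserving: one must control the behaviour simultaneously at corners of all codimensions so that the hand-prescribed monomial boundary components and the inductively smoothed base components assemble into a single smooth map commuting with all the $\phi_{H_iH_j}$ of Definition~\ref{def:ifs}. This is precisely what forces the induction on depth and the insistence on working inside the collar and tubular neighbourhoods, where the compatibility built into Lemma~\ref{lem:Embedding} can be brought to bear.
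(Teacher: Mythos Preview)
Your overall strategy---induct on depth, embed $\wt M$ via Lemma~\ref{lem:Embedding}, work componentwise in $\bbR^{N,\ell}$, and project back with $R_M$---matches the paper's. The gap is in what you feed to the inductive hypothesis. You apply it to the strata closures $\overline{Y^i}$ to smooth the \emph{base} maps $G^i$, then take $\phi_i^*G^i$ as your near-stratum model. But $\phi_i^*G^i$ collapses each link $\wt Z^i_q$ to the single point $G^i(q)$ lying in a singular stratum of $\hat M$; it has no lift to $\wt M$ (the preimage $\beta_M^{-1}(G^i(q))$ is an entire fibre, not a point), so you cannot even compose it with $e_M$, and more fundamentally it carries no \emph{fibre} information. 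A smooth $b$-map $\wt G$ must restrict on each $H_i$ to a fibre-bundle map, i.e.\ must send each link $\wt Z^i_q$ to a link of $\wt M$ by some smooth $b$-map; your construction never produces that link map. The interior smoothing cannot supply it either: as the paper observes, writing $g$ locally as $(y,r,z)\mapsto(w,\rho,f)$, the link component $f$ need not have a limit as $r\to 0$, so an interior approximation has no controlled boundary behaviour in the fibre direction. In the language of the embedding, your ``componentwise approximant'' has base components (via $e_j\circ G^i$) and hand-prescribed radial components, but the components complementary to the projection $p_i$ of \eqref{eq:EmbeddingDiag}---the fibre coordinates---are simply undefined at the boundary. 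What you flag at the end as a bookkeeping obstacle is in fact this missing ingredient.

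The paper instead applies the inductive hypothesis to the \emph{links}: for each singular $q$ it takes the continuous link map $z\mapsto f(q,\tfrac12 a,z)$ at a fixed positive radius---a stratum-preserving map between spaces of strictly smaller depth---and by induction replaces it by a smooth $b$-map $f_q:Z_q\to F_{g(q)}$. The local model is then $k_q(y,r,z)=(w_0(q),\tfrac{b}{a}r,f_q(z))$: constant in the base, linear in the radius, $f_q$ on the link. These $k_q$ \emph{do} have lifts $\wt k_q$ to the resolutions, so $e_M\circ\wt k_q$ makes sense; averaging $\sum_\alpha\wt\lambda_\alpha\,e_M\circ\wt k_{q(\alpha)}$ and applying $R_M$ gives $\wt G$, with the base direction smoothed for free by the partition-of-unity averaging rather than by a separate induction. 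The homotopy is then simply the affine one in $\bbR^{N,\ell}$ followed by $R_M$. Your argument can be repaired by adding this link-level induction, but without it the construction does not yield a $b$-map.
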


\begin{proof}   We first assume that $\hat X$ and $\hat M$ are of depth one.  Choose radial functions $x$ and $x'$ on
each of these spaces near their respective singular strata, $Y$ and $W$, and fix the conic fibrations $\pi$ and $\pi'$ in each of 
the tubular neighborhoods $\mathcal U = \{x < 1\}$ and $\mathcal V = \{x' < 1\}$.  We can cover each of these neighborhoods 
by a finite number of coordinate charts, possibly scaling $x$ and $x'$ if necessary, so that we can represent $h_0$ via 
local adapted coordinates $(x,y,z)$ in $\mathcal U$ and $(x',y',z')$ in $\mathcal V$ as
$$
h_0(x,y, z) = ( x'(x,y, z) , y'(x,y, z),  z'(x,y, z) ). 
$$ 
%(Strictly speaking, we do not need that $z$ is a coordinate on the link $Z$ of the conic fibers, but just a point in $Z$.)
The functions $x'$ and $y'$ are continuous for $0 \leq x \leq 1$; furthermore, $x'(0,y,z) \equiv 0$, and the fact that $h_0$ is 
continuous on $\hat X$ means that $y'(0,y,z)$ is independent of $z$ (we are implicitly using this last fact to say that the image 
of any conic fiber $\pi^{-1}(y)$ remains within a given coordinate chart in $\wt M$). However, in general, $z'(x,y,z)$ need not 
even have a limit as $x \to 0$

We construct an initial homotopy to a map which maps the conic fibers of $\pi$ to those of $\pi'$ and so that 
$z'$ is continuous at $x=0$. Choose a smooth nonnegative function $\chi(t,x)$, $0 \leq t, x \leq 1$, such that 
$\chi(0,x) \equiv x$, $\chi(t,x) > 0$ for all $x \geq 0$ and $\chi(t,x) = x$ for $x \geq 1/2$ when $t > 0$, and 
finally $\chi(1,x) = 1/4$ for $x \leq 1/4$, and choose another such function $\tilde{\chi}(t,x) \geq 0$, again with 
$\tilde{\chi}(0,x) \equiv x$ but now $\tilde{\chi}(1,x) = 0$ for $x \leq 1/4$. Then  
$$
h_t' = ( x'(x,y,z), y'(\tilde{\chi}(t,x), y, z), z'(\chi(t,x),\,y, z))
$$ 
is a homotopy of continuous stratum-preserving maps, and $h_1'$ has the required properties. Strictly speaking
this is defined relative to coordinates, but because we have fixed $\pi$ and $\pi'$, the conic fibers are well-defined
and this procedure makes sense globally. Note that $h_1'$ lifts to a continuous map $\wt X \to \wt M$ 
which respects the fibration structure of the boundaries. We may also, at this stage, choose a further homotopy which
acts only on the radial variables and which results in a map for which $x' = x$ for $x \leq 1/4$.

We now construct the remainder of the homotopy using standard mollification operators in such local coordinate charts.
Note that by construction $x' = x$ and $y'(x,y,z)$ is constant for $x \leq 1/4$, so we need only mollify the fibre-preserving
map $h_0$ along the hypersurface $\{x = 1/4\}$. Locally this amounts to mollifying the family of functions $z'(1/4, y, z)$.
We invoke the argument detailed in \cite[Ch.2\, Thm.\ 2.6]{Hirsch} to pass from mollification in these local neighborhoods 
to a global smooth approximation, which proceeds by inductively extending the approximation over elements of a locally
finite open cover. 

Note that this whole argument only depends on the fact that $\wt X$ has depth $1$. Indeed, suppose that $\wt M$ has depth $\ell$.
We may as well assume that the image of the principal open stratum of $\wt X$ lies in the principal open stratum of $\wt M$.
In addition, the singular stratum $Y$ of $\wt X$ is a closed manifold, hence its image lies entirely within a stratum $W$ of $\wt M$,
and does not intersect any other strata.  Therefore, $\wt M$ effectively has depth $1$ near the image of $\wt X$, and since
the argument above is local, it applies directly.

We now induct on the depth of $\wt X$.   Suppose that the result has been proved for all stratified spaces with 
depth less than $k$, and for all stratified images $\wt M$. Suppose $\wt X$ has depth $k$. As before, let $Y$ denote 
its highest depth stratum and $W$ the stratum in $\wt M$ which contains $h_0(Y)$ but not $h_0(\wt X)$. Also 
choose tubular neighborhoods, fibrations and radial variables $(\mathcal U, \pi, x)$ and $(\mathcal V, \pi', x')$, respectively.  
We first choose a homotopy to a map $h_0'$ which maps the fibers of $\pi$ to the fibers of $\pi'$, satisfies $(h_0')^* (x') = x$, 
and whose projection to a map of links $Z \to Z'$ is constant in $x$ for $x \leq 1/4$.  Now apply the inductive hypothesis and 
the local nature of the proof to homotope $h_0'$ in the region $x \geq 1/4$ to a smooth stratum-preserving map $h_0''$ which 
continues to preserves the fibers of the stratified subspace $\{x = 1/4\}$. We finally choose a homotopy of the
family of maps on the links of these conic fibers which is smooth. This completes the proof. 
\end{proof}

\begin{corollary}
If $g_1: \hat X \lra \hat M$ and $g_2: \hat X \lra \hat M$ are two smooth stratum-preserving maps and they are homotopic through continuous stratum-preserving maps, then they are homotopic through smooth stratum-preserving $b$-maps.
\end{corollary}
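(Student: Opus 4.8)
The plan is to realize the homotopy as a map on $\hat X\times[0,1]$ and to invoke a relative refinement of Theorem~\ref{thm:SmoothApp}. By hypothesis there is a continuous stratum-preserving map $H\colon\hat X\times[0,1]\lra\hat M$ with $H_0=g_1$ and $H_1=g_2$; here $\hat X\times[0,1]$ is again a smoothly stratified space, with resolution $\wt X\times[0,1]$ carrying the iterated fibration structure induced from that of $\wt X$ together with the trivial fibrations of the two new boundary hypersurfaces $\wt X\times\{0\},\wt X\times\{1\}$ over $\wt X$, so Theorem~\ref{thm:SmoothApp} applies to it. First I would reparametrize in the $[0,1]$ factor: fixing a smooth nondecreasing surjection $\chi\colon[0,1]\to[0,1]$ that is $\equiv 0$ on $[0,\tfrac14]$ and $\equiv 1$ on $[\tfrac34,1]$ and replacing $H$ by $H\circ(\id\times\chi)$ (still continuous and stratum-preserving), we may assume $H\equiv g_1$ on $\hat X\times[0,\tfrac14]$ and $H\equiv g_2$ on $\hat X\times[\tfrac34,1]$. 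Thus $H$ is already a smooth $b$-map on the open set $\cO=\hat X\times\big([0,\tfrac14)\cup(\tfrac34,1]\big)$, and $K=\hat X\times\{0,1\}$ is a closed subset of $\cO$ admitting a neighborhood whose closure still lies in $\cO$.

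The key step is a relative version of Theorem~\ref{thm:SmoothApp}: if the stratum-preserving map being approximated is already a smooth $b$-map on an open set $\cO$, then the approximating smooth $b$-map $G$ may be chosen to coincide with it on a neighborhood of any closed $K\subseteq\cO$ of the above kind. This needs only a small modification of the given proof. When choosing the finite cover $\{\cV_\alpha\}$ of $\hat X\times[0,1]$ by distinguished neighborhoods, one arranges that every $\cV_\alpha$ that meets $K$ is contained in $\cO$, and for those indices one takes the local model $k_{q(\alpha)}$ to be the restriction of $H$ itself (which is already a smooth $b$-map there) rather than the constant/cone model. Since $R_M\circ e_M=\id$ on $e_M(\wt M)$ and $\sum_\alpha\wt\lambda_\alpha\equiv 1$, on the open neighborhood of $K$ where only such indices carry nonzero $\wt\lambda_\alpha$ the patched map $\wt G=R_M\big(\sum_\alpha\wt\lambda_\alpha\,e_M\circ\wt k_{q(\alpha)}\big)$ collapses to $R_M\circ e_M\circ\wt H=\wt H$, so $G=H$ there. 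Applying this with the $\cO$ and $K$ above produces a stratum-preserving smooth $b$-map $G\colon\hat X\times[0,1]\lra\hat M$ which agrees with $H$ near $\hat X\times\{0,1\}$; in particular $G_0=g_1$ and $G_1=g_2$.

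It remains to see that $G$ is a homotopy through smooth stratum-preserving $b$-maps. For $t\in(0,1)$ the slice inclusion $\iota_t\colon\hat X\hookrightarrow\hat X\times[0,1]$, $\zeta\mapsto(\zeta,t)$, is a smooth $b$-map: it misses the new boundary hypersurfaces $\wt X\times\{0,1\}$ and so pulls their boundary defining functions back to positive constants, while the remaining boundary defining functions are preserved; hence $G_t=G\circ\iota_t\in\CI_b(\hat X,\hat M)$. For $t\in\{0,1\}$ we have $G_t=g_t$, a smooth $b$-map by hypothesis. Moreover each $G_t$ is stratum-preserving because $G$ is, since the image under $G_t$ of a stratum $S$ of $\hat X$ lies inside $G(S\times(0,1))$, which is contained in a single stratum of $\hat M$. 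Thus $G$ exhibits $g_1$ and $g_2$ as homotopic through smooth stratum-preserving $b$-maps. The only genuinely nontrivial point is the relative refinement of Theorem~\ref{thm:SmoothApp}; the reparametrization is what lets us keep the endpoints fixed, and the remaining checks are routine.
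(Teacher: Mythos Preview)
Your proof is correct and is the natural way to establish this corollary. The paper states the corollary without proof, presumably regarding it as a routine consequence of Theorem~\ref{thm:SmoothApp}; your argument supplies exactly the details one would expect: reparametrize so the homotopy is already smooth near the endpoints, then invoke a relative form of the approximation theorem so that the smoothed homotopy agrees with the original there. Your sketch of the relative refinement (take local models equal to $H$ itself on charts contained in $\cO$, so the partition-of-unity patching returns $H$ near $K$) is the standard modification and goes through without difficulty; this is the same device used in the classical Whitney approximation theorem relative to a closed set.

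Two minor remarks. First, $\hat X\times[0,1]$ is a stratified pseudomanifold \emph{with boundary}, so strictly speaking one should note that the proof of Theorem~\ref{thm:SmoothApp} works verbatim in this setting (the collared boundary causes no trouble), or alternatively double to $\hat X\times S^1$ and work there. Second, you correctly observe that $\iota_t$ fails to be a $b$-map at $t\in\{0,1\}$, and handle this by appealing directly to $G_t=g_t$ there; it is worth saying explicitly that the family $t\mapsto G_t$ is nonetheless continuous (indeed smooth) in $t$ across $t=0,1$, which follows because $G$ itself is smooth on all of $\wt X\times[0,1]$.
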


\begin{corollary}
If $\hat X$ and $\hat M$ are smoothly stratified spaces and there is a continuous stratified homotopy equivalence between them, then there is a smooth stratified homotopy equivalence between them.
\end{corollary}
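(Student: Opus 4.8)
The plan is to smooth out the two maps of a given continuous stratified homotopy equivalence separately, using Theorem \ref{thm:SmoothApp}, then repair the two compositions with the preceding Corollary, keeping careful track throughout that every map and homotopy in sight remains codimension-preserving and not merely stratum-preserving.

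Concretely, given a continuous stratified homotopy equivalence, call its maps $f_0 : \hat X \lra \hat M$ and $g_0 : \hat M \lra \hat X$ (codimension-preserving) and its homotopies $K : g_0 \circ f_0 \simeq \id_{\hat X}$, $L : f_0 \circ g_0 \simeq \id_{\hat M}$ (through codimension-preserving maps). Since a codimension-preserving map is in particular stratum-preserving, I would first apply Theorem \ref{thm:SmoothApp} to $f_0$ and to $g_0$ with a small parameter $\eps > 0$, getting smooth $b$-maps $f \in \CI_b(\hat X, \hat M)$, $g \in \CI_b(\hat M, \hat X)$ and stratum-preserving $\eps$-homotopies $A : f_0 \simeq f$, $B : g_0 \simeq g$. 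The first point to nail down is that, for $\eps$ small, $f$ and $g$ (and each $A_t$, $B_t$) are codimension-preserving: this is not formal, but it falls out of the \emph{shape} of the approximation built in the proof of Theorem \ref{thm:SmoothApp}. There the local model near a point $q$ of a singular stratum is $k_q(y,r,z) = (w_0(q), \tfrac ba r, f_q(z))$, which sends the radial variable to a positive multiple of itself and so carries the local stratum $\{r = 0\}$ into the local stratum of $\hat M$ of the same depth; since this survives the partition of unity and the fibration-respecting retraction $R_M$ (and the straight-line homotopy $R_M(t e_M(G(\cdot)) + (1-t) e_M(g(\cdot)))$ used for $A, B$ preserves the relevant vanishing loci too), $f$, $g$, $A_t$, $B_t$ all send each stratum into a stratum of the same codimension. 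Hence $f^{-1}(T)$ is a union of strata of codimension $\mathrm{codim}\, T$ for every stratum $T$ of $\hat M$; it is nonempty because $f_0^{-1}(T)$ contains a stratum $S$ with $\mathrm{codim}\, S = \mathrm{codim}\, T$ and, for $\eps$ small relative to the geometry of $\hat M$, an $\eps$-homotopy through stratum-preserving maps cannot push the stratum-image of $S$ off the unique codimension-$\mathrm{codim}\,T$ stratum (namely $T$) meeting the $\eps$-neighbourhood of $T$. So $f, g \in \CI_{b,cod}$.

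Next I would assemble the conclusion. The composite $g \circ f : \hat X \lra \hat X$ is a smooth codimension-preserving $b$-map, and concatenating
\[
	g \circ f \;\xlra{g \circ A}\; g \circ f_0 \;\xlra{B \circ f_0}\; g_0 \circ f_0 \;\xlra{K}\; \id_{\hat X}
\]
(pre- or post-composing a codimension-preserving homotopy by a codimension-preserving map stays codimension-preserving) gives a homotopy from $g \circ f$ to $\id_{\hat X}$ through continuous stratum-preserving maps. Both endpoints being smooth stratum-preserving $b$-maps, the preceding Corollary upgrades this to a homotopy through smooth stratum-preserving $b$-maps; re-running the proofs of Theorem \ref{thm:SmoothApp} and of that Corollary while fixing the radial structure, exactly as above, makes every slice codimension-preserving, so $g \circ f \simeq \id_{\hat X}$ through smooth codimension-preserving $b$-maps. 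Symmetrically $f \circ g \simeq \id_{\hat M}$ through smooth codimension-preserving $b$-maps, so $(f, g)$ is the desired smooth stratified homotopy equivalence.

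The hard part is the codimension bookkeeping. Theorem \ref{thm:SmoothApp} and the preceding Corollary are stated only for stratum-preserving maps, and this is genuinely weaker: a stratum-preserving map $\eps$-close to — even homotopic through stratum-preserving maps to — a codimension-preserving map need not be codimension-preserving (e.g. the constant map on the half-open cone $C_1(\{\pt\})$, which is homotopic to $\id$ through stratum-preserving maps yet has empty, rather than codimension-one, preimage of the cone point). What rescues the argument is that the approximation of Theorem \ref{thm:SmoothApp} is built to send radial coordinates to positive multiples of themselves, hence to preserve the vanishing loci of prescribed radial coordinates, i.e. to map strata to strata of equal codimension; the $\eps$-closeness then selects the correct stratum of that codimension. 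So the real work is to record the codimension-preserving refinements of Theorem \ref{thm:SmoothApp} and of its first Corollary, after which the present Corollary follows by the composition-chasing above.
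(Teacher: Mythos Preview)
The paper gives no proof of this corollary, treating it as an immediate consequence of Theorem~\ref{thm:SmoothApp} and its first corollary. Your approach---approximate $f_0,g_0$ by smooth $b$-maps, concatenate the obvious homotopies, then smooth those---is exactly the intended one, and your write-up is in fact more careful than the paper's silent treatment.

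In particular, you correctly flag a genuine gap that the paper glosses over: Theorem~\ref{thm:SmoothApp} and its first corollary are stated only for \emph{stratum}-preserving maps, whereas Definition~\ref{def:CodPreserving} requires a stratified homotopy equivalence to use \emph{codimension}-preserving maps and homotopies. Your observation that the local models $k_q(y,r,z)=(w_0(q),\tfrac{b}{a}r,f_q(z))$ send radial variables to positive multiples of themselves, and that the embedding diagram~\eqref{eq:EmbeddingDiag} makes the retraction $R_M$ respect the boundary faces $\{x_j=0\}$, is exactly what is needed to see that the whole construction (convex combination, retraction, straight-line homotopy) preserves the vanishing loci of the boundary defining functions and hence preserves codimension. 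So the refinement you outline---that Theorem~\ref{thm:SmoothApp} and its first corollary hold with ``codimension-preserving'' in place of ``stratum-preserving'' when the input has that property---is correct and is the real content here.

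One small point: your argument that $f^{-1}(T)$ is nonempty via ``$\eps$-closeness selects the correct stratum'' is slightly roundabout. It is cleaner to note directly that since each $\wt k_{q(\alpha)}$ sends the boundary face corresponding to $S$ into the boundary face corresponding to $T$ (and nowhere else of that codimension), the averaged map $\wt G$ does too; no separate $\eps$-argument is needed once you have the radial-variable observation.
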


Miller \cite{Miller} established a criterion for recognizing if certain maps between two {\em homotopically stratified} spaces is a stratified homotopy equivalence. His criterion applies to smoothly stratified spaces, and our theorem shows that it recognizes smooth homotopy equivalences.

\begin{corollary}
Let $\hat X$ and $\hat M$ be smoothly stratified spaces and $g\in \CI(\hat X, \hat M).$ Assume that $\hat X$ and $\hat M$ have the same number of strata, the pre-image of a stratum of $\hat M$ under $g$ is a stratum of $\hat X$ and the induced map between the partially order sets (corresponding to the partial ordering of the strata) is an isomorphism. Then $g$ is a homotopy equivalence (through smooth maps with these properties) if and only if the induced maps on strata and links are stratified homotopy equivalences. 
\end{corollary}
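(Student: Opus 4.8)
The plan is to prove the two implications separately, the ``only if'' direction being essentially formal and the ``if'' direction going by induction on depth. For ``only if'' I would restrict: given a homotopy inverse $h$ of $g$ and codimension-preserving homotopies $hg\simeq\id$, $gh\simeq\id$, their restrictions to a stratum $Y$ of $\hat M$ (and, using the local triviality from axiom (v), to a link $L_Y$) exhibit the induced maps on that stratum and on that link as stratified homotopy equivalences. So the content is the ``if'' direction.

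For ``if'', I would first reduce to the continuous category: by Theorem~\ref{thm:SmoothApp} and the two corollaries preceding this one, it suffices to produce a \emph{continuous} codimension-preserving homotopy inverse together with continuous codimension-preserving homotopies, since these can then be replaced by smooth $b$-maps in the same homotopy class. I would then induct on $\mathrm{depth}(\hat X)=\mathrm{depth}(\hat M)$ (equal, since the map of posets is an isomorphism and depth is read off from the poset). The base case, depth $0$, says only that a homotopy equivalence of closed manifolds has a smooth homotopy inverse through smooth maps, which is standard smooth approximation. For the inductive step, fix a singular stratum $Y\subseteq\hat M$ and its preimage $Y'=g^{-1}(Y)$, together with the Thom--Mather tubes $T_Y\cong\mathrm{Cyl}(S_Y,\pi_Y)$ and $T_{Y'}\cong\mathrm{Cyl}(S_{Y'},\pi_{Y'})$. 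Because $g$ is codimension preserving and compatible with the control data up to homotopy, over a slightly shrunken tube $g$ is homotopic, through codimension-preserving maps, to the mapping cylinder of a bundle map $S_{Y'}\to S_Y$ covering $g|_{Y'}\colon Y'\to Y$ and restricting on fibers to the given link map $L_{Y'}\to L_Y$. By hypothesis $g|_{Y'}$ is a stratified homotopy equivalence, and the inductive hypothesis applied to the link map --- whose own induced maps on strata and links are stratified homotopy equivalences, by the hypotheses on $g$ --- shows the link map is one as well. Combining a homotopy inverse of $g|_{Y'}$ with a fiberwise stratified homotopy inverse of the link bundle map yields, over $T_Y$, a codimension-preserving homotopy inverse of $g$ with all the required homotopies, compatible with the conic structure.

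Next I would build a homotopy inverse away from the tubes. The complements $\hat X\setminus\beta_X(\{\rho<\tfrac12\})$ and $\hat M\setminus\beta_M(\{\rho<\tfrac12\})$ are smoothly stratified spaces with boundary of strictly smaller depth, on which $g$ restricts to a map with the same three properties and with boundary behavior the bundle map above; a relative version of the inductive hypothesis (obtained, e.g., by doubling across the boundary, or by working directly with the manifold-with-corners resolutions) produces a codimension-preserving homotopy inverse there agreeing on $\{\rho=\tfrac12\}$ with the one already built over the tubes. I would then patch the two homotopy inverses, and the two families of homotopies, using a partition of unity of $\hat X$ from $\CI(\hat X)$ subordinate to $\{\rho<\tfrac34\}$ and $\{\rho>\tfrac14\}$ together with the product collar $[\tfrac14,\tfrac34]_\rho\times\{\rho=\tfrac12\}$; the conic structure keeps the glued maps codimension preserving, and Theorem~\ref{thm:SmoothApp} then upgrades everything to smooth $b$-maps with the stated properties. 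An alternative to this induction is to invoke Miller's recognition theorem \cite{Miller}: a smoothly stratified pseudomanifold is a manifold homotopically stratified space whose homotopy link at $Y$ is homotopy equivalent to $S_Y$, so the hypotheses are precisely Miller's, and the resulting continuous stratified homotopy equivalence is made smooth via Theorem~\ref{thm:SmoothApp} and the corollary identifying continuous and smooth stratified homotopy equivalences.

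The step I expect to be the main obstacle is the final gluing: the homotopy inverses built over the tubes $T_Y$ and over the complement need not agree on the overlap, so one must interpolate along the collar while preserving codimension and compatibility with the iterated fibration structure --- the same delicate bookkeeping that, in Theorem~\ref{thm:SmoothApp}, forced the construction to be performed inside the Euclidean embedding from Lemma~\ref{lem:Embedding} and smoothed only at the very end.
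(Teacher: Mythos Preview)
Your alternative at the end \emph{is} the paper's proof. The corollary is stated immediately after the sentence ``Miller \cite{Miller} established a criterion\ldots\ His criterion applies to smoothly stratified spaces, and our theorem shows that it recognizes smooth homotopy equivalences,'' and no further argument is given: the content is that smoothly stratified spaces are homotopically stratified in Miller's sense, so his theorem supplies the continuous stratified homotopy equivalence, and the preceding corollaries upgrade it to the smooth category. Your main inductive construction is therefore not wrong, but it is unnecessary---you would be re-deriving Miller's recognition principle by hand, including the gluing step you correctly flag as the delicate point. If you want to present the direct argument, the tube/complement decomposition is the right skeleton, but the paper buys all of that wholesale by citing \cite{Miller}.
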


\subsection{Mezzoperversities}

Iterated incomplete edge ($\iie$) metrics and the corresponding dual metrics are degenerate on $T\wt X$ and $T^*\wt X,$ respectively. 
However there are rescaled versions of these bundles on which these are {\em non-degenerate} bundle metrics.
An efficient description of the {\bf iie-cotangent bundle} $\Iie T^*X$ is given by its space of sections: 
\begin{equation*}
	\CI(\wt X; \Iie T^*X) = \{ \omega \in \CI(\wt X; T^*\wt X) : |\omega|_g \text{ is pointwise bounded} \}.
\end{equation*}
Thus in local coordinates near $H$ it is spanned by
\begin{equation*}
	dx, \quad
	dy, \quad
	x dz
\end{equation*}
where $y$ is a local coordinate along $Y_H$ and $z$ is a local coordinate along the fibers of $\phi_H.$
The {\bf iie-tangent bundle} $\Iie TX$ is the dual bundle of $\Iie T^*X.$ 

There is an analogous construction of `iterated edge', or $\ie$, bundles and metrics. Briefly, a vector field on $\wt X$ is an 
$\ie$-vector field if its restriction to any boundary hypersurface is tangent to the fibers of the boundary fibration and the 
$\ie$-vector fields are the sections of vector bundle, the {\bf ie-tangent bundle $\Ie TX.$} For more details on the $\ie$ 
and $\iie$ tangent bundles, we refer the reader to \cite[\S 4.4]{ALMP11}. This $\ie$-tangent bundle will play an important role 
in section \ref{sec:HSmaps}.

An important fact is that $\Iie T^*X$ restricted to the interior of $\wt X$ is canonically isomorphic to the cotangent bundle of $X,$ the regular part of $\hat X.$
Thus when studying differential forms on $X$ we can equally well work with sections of $\Lambda^*(\Iie T^*X).$
This has several advantages when analyzing the behavior of the exterior derivative $d$ and the de Rham operator $\eth_{\dR} = d+\delta$ of an $\iie$-metric. For example it brings out some remarkable symmetry and inductive structure of these operators at each boundary hypersurface. This is exploited in \cite{ALMP11} and \cite{ALMP13.1} to study geometrically natural domains for these differential operators when acting on $L^2.$\\

Let $(\hat X, g)$ be a stratified space with an $\iie$ metric, and let $L^2(X;\Lambda^* \Iie T^*X)$ denote the Hilbert space of differential forms on $X$ with respect to the induced measure. The operator $\eth_{\dR} = d+\delta$  is initially defined as a linear operator
\begin{equation*}
	\eth_{\dR}: \CIc(X;\Lambda^* \Iie T^*X) \lra \CIc(X;\Lambda^* \Iie T^*X)
\end{equation*}
and has two canonical extensions to a closed unbounded operator on $L^2.$
The first is the minimal domain
\begin{multline*}
	\cD_{\min}(\eth_{\dR})
	= \{ \omega \in L^2(X;\Lambda^* \Iie T^*X) : \\
	\exists (\omega_n) \subseteq \CIc(X;\Lambda^* \Iie T^*X) \Mst \omega_n \xlra{L^2} \omega \Mand (\eth_{\dR}\omega_n) \text{ is $L^2$-Cauchy} \}
\end{multline*}
and the second is the maximal domain
\begin{equation*}
	\cD_{\max}(\eth_{\dR})
	= \{ \omega \in L^2(X;\Lambda^* \Iie T^*X) : 
	\eth_{\dR}\omega \in L^2(X;\Lambda^* \Iie T^*X) \},
\end{equation*}
where $\eth_{\dR}\omega$ is evaluated distributionally.
These are adjoint domains and if they coincide we say that $\eth_{\dR}$ is essentially self-adjoint.

There are two obstructions to these domains coinciding.
The first is geometric. An $\iie$ metric $g$ on $\hat X$ induces an $\iie$ metric on each link $Z$ of $\hat X,$ and so a corresponding de Rham operator $\eth_{\dR}^Z.$
Often a closed domain for $\eth_{\dR}$ will induce a closed domain for each $\eth_{\dR}^Z$ and the {\em small eigenvalues} of $\eth_{\dR}^Z$ obstruct essential self-adjointness of $\eth_{\dR}.$ Fortunately it is easy to avoid the non-zero small eigenvalues by working with `properly scaled' metrics, see \cite[\S 5.4]{ALMP11}, \cite[\S 3.1]{ALMP13.1}, and we always assume that we have done this.

The second obstruction is topological. For example, if all of the links are odd-dimensional then for a suitably scaled $\iie$ metric the de Rham operator is essentially self-adjoint. The precise condition needed was formulated by Siegel \cite{Siegel}. A {\bf Witt space} is a pseudomanifold $\hat X$ such that every even-dimensional link $Z$ satisfies
\begin{equation*}
	IH^{\bar m}_{\dim Z/2}(Z) = 0
\end{equation*}
where $IH^{\bar m}_*$ is the upper middle perversity intersection homology of Goresky-MacPherson \cite{GM1}. These are precisely the spaces where a suitably scaled $\iie$ metric has essentially self-adjoint de Rham operator.\\

Suppose that $Y^1, \ldots, Y^T$ are the singular strata of $\hat X$ organized with non-decreasing depth, so that the link of $\hat X$ at $Y^1$ is a smooth manifold $Z^1,$ say of dimension $f^1.$ We denote by $H^i$ the boundary hypersurface of $\wt X$ lying above $Y^i.$
The $\iie$ metric $g$ on $\hat X$ induces a Riemannian metric on each $Z^1$ and we say that {\em $g$ is suitably scaled at $Y^1$} if each $\eth_{\dR}^{Z^1}$ does not have non-zero eigenvalues with absolute value less than one.
For these metrics, in \cite{ALMP13.1} we prove that any element $u \in \cD_{\max}(\eth_{\dR})$ has an asymptotic expansion at $Y^1,$
\begin{equation*}
	u \sim x^{-f^1/2}(\alpha_1(u) + dx \wedge \beta_1(u)) + \wt u
\end{equation*}
in terms of a boundary defining function $x$ for $H^1,$ however the terms in this expansion are distributional,
\begin{equation*}
	\alpha_1(u), \beta_1(u) \in H^{-1/2}(Y^1; \Lambda^*T^*Y^1 \otimes \cH^{f^1/2}(H^1/Y^1)), \quad
	\wt u \in xH^{-1}(X; \Lambda^*\Iie T^*X).
\end{equation*}
Here $\cH^{f^1/2}(H^1/Y^1)$ is the bundle over $Y^1$ with typical fiber $\cH^{f^1/2}(Z^1),$ the Hodge cohomology group of $Z^1$ with its induced metric. It has a natural flat connection discussed in detail below in \S\ref{sec:GMConn} and it inherits a metric.
This is true for all values of $f^1,$ with the convention that if $f^1$ is not even then $\cH^{f^1/2}(H^1/Y^1) =0,$ and so $\alpha_1(u) = \beta_1(u) =0.$

The stratum $Y^1$ is a {\em Witt stratum} of $\hat X$ if the group $\cH^{f^1/2}(Z^1)$ is trivial.
Otherwise the distributional differential forms $\alpha(u),$ $\beta(u)$ serve as `Cauchy data' at $Y^1$ which we use to define {\em Cheeger ideal boundary conditions}. 
For any subbundle
\begin{equation*}
\xymatrix{ W^1 \ar[rd] \ar[rr] & & \cH^{f^1/2}(H^1/Y^1) \ar[ld] \\ & Y^1 & }
\end{equation*}
that is parallel with respect to the flat connection, we define 
\begin{multline}\label{eq:DefDmaxW1}
	\cD_{\max, W^1}(\eth_{\dR}) = \{ u \in \cD_{\max}(\eth_{\dR}) : \\
	\alpha_1(u)  \in H^{-1/2}(Y^1; \Lambda^*T^*Y^1 \otimes W^1), \quad
	\beta_1(u) \in H^{-1/2}(Y^1; \Lambda^*T^*Y^1 \otimes (W^1)^{\perp}) \}.
\end{multline}
Canonical choices are to take $W^1$ equal to the zero section or $W^1$ equal to $\cH^{f^1/2}(H^1/Y^1).$

If the link of $\hat X$ at $Y^2,$ $Z^2,$ is smooth we can carry out the same procedure at $Y^2.$
If $Z^2$ is not smooth, then the compatibility conditions between the strata show that the link of $Z^2$ at its singular stratum is again $Z^1$ (see, e.g., \cite[\S1.2]{ALMP13.1}). Thus the choice of flat bundle $W^1$ induces a domain for the de Rham operator on $Z^2,$
\begin{equation*}
	\cD_{W^1}(\eth_{\dR}^{Z^2})
\end{equation*}
by imposing Cheeger ideal boundary conditions corresponding to $W^1.$
The metric $g$ is suitably scaled at $Y^2$ if at each $Z^2$ the operator $(\eth_{\dR}^{Z_2}, \cD_{W^1}(\eth_{\dR}^{Z^2}))$ does not have non-zero eigenvalues with absolute value less than one.

The null space of $\eth_{\dR}^{Z^2}$ with this domain is finite dimensional and denoted $\cH^*_{W^1}(Z^2).$
These spaces form a bundle over $Y^2,$
\begin{equation*}
	\cH^*_{W^1}(H^2/Y^2) \lra Y^2,
\end{equation*}
which is again naturally endowed with a flat connection and a bundle metric.
We prove in \cite{ALMP13.1} that every differential form $u \in \cD_{\max, W^1}(\eth_{\dR})$ has a distributional asymptotic expansion at $Y^2,$
\begin{equation*}
	u \sim x^{-f^2/2}(\alpha_2(u) + dx \wedge \beta_2(u)) + \wt u'
\end{equation*}
where now $x$ is a boundary defining function for $H^2,$ $f^2$ is the dimension of $Z^2,$ and 
\begin{equation*}
	\alpha_2(u), \beta_2(u) \in H^{-1/2}(Y^2; \Lambda^*T^*Y^2 \otimes \cH^{f^2/2}_{W^1}(H^2/Y^2)), \quad
	\wt u' \in xH^{-1}(X; \Lambda^*\Iie T^*X).
\end{equation*}
(As with $f^1,$ if $f^2$ is odd then our convention is that $\cH^{f^2/2}_{W^1}(H^2/Y^2))=0$ and so $\alpha_2(u)= \beta_2(u)=0.$)
Thus to define Cheeger ideal boundary conditions at $Y^2,$ compatibly with the choice at $Y^1,$ we need to choose a flat subbundle 
\begin{equation*}
	\xymatrix{ W^2 \ar[rr] \ar[rd] & & \cH^{f^2/2}_{W^1}(H^2/Y^2) \ar[ld] \\ & Y^2 & }
\end{equation*}
and then we set
\begin{multline*}
	\cD_{\max, (W^1,W^2)}(\eth_{\dR}) = \{ u \in \cD_{\max, W^1}(\eth_{\dR}) : \\
	\alpha_2(u)  \in H^{-1/2}(Y^2; \Lambda^*T^*Y^2 \otimes W^2), \quad
	\beta_2(u) \in H^{-1/2}(Y^2; \Lambda^*T^*Y^2 \otimes (W^2)^{\perp}) \}.
\end{multline*}
Now proceeding inductively we can give the general definition.

\begin{definition}
Let $(\hat X, g)$ be a stratified space with an $\iie$ metric. A collection of bundles
\begin{equation*}
	\cW = \{ W^1 \lra Y^1, W^2 \lra Y^2, \ldots, W^T \lra Y^T \}
\end{equation*}
is a {\bf (Hodge) mezzoperversity} adapted to $g$ if, inductively, at each $q\in Y^i$ the operator $\eth_{\dR}^{Z^i_q}$ with its induced domain does not have any non-zero eigenvalues of absolute value less than one, and each $W^i$ is a flat subbundle
\begin{equation*}
	\xymatrix{ W^i \ar[rr] \ar[rd] & & \cH^{f^i/2}_{(W^1, \ldots, W^{i-1})}(H^i/Y^i) \ar[ld] \\ & Y^i & }
\end{equation*}
where $\cH^{f^i/2}_{(W^1, \ldots, W^{i-1})}(H^i/Y^i)$ is zero if $f^i$ is odd and otherwise is the bundle over $Y^i$ with typical fiber equal to the null space of $\eth_{\dR}^{Z^i}$ with domain $\cD_{(W^1, \ldots, W^{i-1})}(Z^i).$
\end{definition}
Every Hodge mezzoperversity $\cW$ on $\hat X$ induces a Hodge mezzoperversity on each link $Z$ which we denote $\cW(Z).$

Every mezzoperversity induces a closed self-adjoint domain for $\eth_{\dR},$
\begin{equation*}
	\cD_{\cW}(\eth_{\dR}),
\end{equation*}
which we prove in \cite{ALMP13.1} is compactly included in $L^2(X;\Lambda^*\Iie T^*X).$ Thus $(\eth_{\dR}, \cD_{\cW}(\eth_{\dR}))$ is Fredholm with discrete spectrum, facts already used in the inductive definition of the mezzoperversity. We denote the corresponding Hodge cohomology groups by 
\begin{equation*}
	\cH^*_{\cW}(\hat X)
\end{equation*}
or $\cH^*_{\cW}(\hat X;g)$ when we want to consider the dependence on the metric.\\

We can also use a mezzoperversity to define domain for the exterior derivative as an unbounded operator on $L^2$ differential forms. Namely,
we take the closure of $\cD_{\cW}(\eth_{\dR})$ in the graph norm of $d,$
\begin{equation*}
	\cD_{\cW}(d)
	= \{ \omega \in L^2(X;\Lambda^* \Iie T^*X) : 
	\exists (\omega_n) \subseteq \cD_{\cW}(\eth_{\dR}) \Mst \omega_n \xlra{L^2} \omega \Mand (d\omega_n) \text{ is $L^2$-Cauchy} \}.
\end{equation*}
Similarly we define $\cD_{\cW}(\delta)$ and show that these domains are mutually adjoint and satisfy
\begin{equation}\label{eq:intersec}
	\cD_{\cW}(\eth_{\dR}) = \cD_{\cW}(d) \cap \cD_{\cW}(\delta).
\end{equation}

An important feature of these domains is that, if we denote
\begin{equation*}
	\cD_{\cW}(d)^{[k]} = \cD_{\cW}(d) \cap L^2(X; \Lambda^k (\Iie T^*X))
\end{equation*}
then
\begin{equation*}
	d: \cD_{\cW}(d)^{[k]} \lra \cD_{\cW}(d)^{[k+1]}
\end{equation*}
so that $\cD_{\cW}(d)^{[*]}$ forms a complex. We denote the corresponding de Rham cohomology groups by $\tH^*_{\cW}(\hat X).$ We prove in \cite{ALMP13.1} that
these groups are independent of the choice of metric and that there is a canonical isomorphism
\begin{equation}\label{eq:HodgeDeRham}
	\tH^*_{\cW}(\hat X) \cong \cH^*_{\cW}(\hat X).
\end{equation}

A {\bf de Rham mezzoperversity} is defined like a Hodge mezzoperversity but using de Rham cohomology groups instead of Hodge cohomology groups. Note that we need an $\iie$ metric to talk about a Hodge mezzoperversity, since it involves bundles of harmonic forms. However, as we recall below, the de Rham cohomology groups are metric independent, so we can talk about a de Rham mezzoperversity without reference to a particular metric. Since we can think of a de Rham mezzoperversity as an equivalence class of Hodge mezzoperversities, with the advantage of being metric independent, we often denote a de Rham mezzoperversity by $[\cW].$

In more detail, suppose that $(\hat X, g, \cW)$ and $(\hat X, g', \cW')$ are Hodge mezzoperversities with adapted $\iie$ metrics, say 
\begin{equation*}
	\cW = \{ W^1 \lra Y^1, \ldots, W^T \lra Y^T \}, \quad
	\cW' = \{ (W^1)' \lra Y^1, \ldots, (W^T)' \lra Y^T \}.
\end{equation*}
Since $g$ and $g'$ are quasi-isometric over $\wt X$ as metrics on $\Iie TX,$ we have
\begin{equation*}
	L^2(X; \Lambda^* \Iie T^*X; g) 
	=L^2(X; \Lambda^* \Iie T^*X; g'). 
\end{equation*}
The two Hodge mezzoperversities represent the same de Rham mezzoperversity if, at each $Y^i,$ the Hodge-de Rham isomorphism \eqref{eq:HodgeDeRham} identifies $W^i$ and $(W^i)'.$
We proved in \cite[Theorem 5.9]{ALMP13.1} that in that case
\begin{equation*}
	\cD_{\cW}(d) = \cD_{\cW'}(d), \Mhence \tH^*_{\cW}(\hat X;g) = \tH^*_{\cW'}(\hat X;g') =: \tH^*_{[\cW]}(\hat X)
\end{equation*}
where we use $[\cW]$ to indicate the de Rham mezzoperversity associated to $\cW.$\\

Let us say that an $\iie$ metric $g$ is adapted to the de Rham mezzoperversity $[\cW]$ if the Hodge-de Rham isomorphisms corresponding to $g$ and its induced $\iie$ metrics on the links produce a Hodge mezzoperversity $\cW$ adapted to $g.$

%%%%%%%%%%%%%%%%%%%%%%%%
\section{Twisted de Rham operators and induced domains} \label{sec:TwistedDeRham}
%%%%%%%%%%%%%%%%%%%%%%%%

Let $g$ be an $\iie$ metric on $\hat X$ and let $\eth_{\dR} = d + \delta$ be the corresponding de Rham operator.
If $f \in \CI(\hat X)$ then 
\begin{equation*}
	[\eth_{\dR}, f] 
\end{equation*}
is a linear combination of the exterior product by $df$ and the interior product by $\nabla f,$ and hence is a bounded operator on $L^2(X;\Lambda^* \Iie T^*X).$
It follows that $u \in \cD_{\max}(\eth_{\dR})$ implies $fu \in \cD_{\max}(\eth_{\dR})$ and, since there is a partition of unity consisting of functions in $\CI(\hat X),$ we have
\begin{equation}\label{eq:DmaxIsLocal}
	u \in \cD_{\max}(\eth_{\dR}) \iff fu \in \cD_{\max}(\eth_{\dR}) \Mforall f \in \CI(\hat X).
\end{equation}
We say that a domain for the de Rham operator is {\bf a local domain} if it satisfies \eqref{eq:DmaxIsLocal}.
Directly from their definition, $\cD_{\min}(\eth_{\dR})$ and $\cD_{\cW}(\eth_{\dR})$ are local domains for any mezzoperversity $\cW.$\\

Consider a covering of a stratified space 
\begin{equation*}
	\Gamma - \hat X' \lra \hat X
\end{equation*}
where $\Gamma$ is a countable, finitely generated, finitely presented group, with classifying map $r:\hat X \lra B\Gamma.$
In \cite{ALMP11} we made use of the following fact: 
if $\cU_q \cong \bbR^h \times C(Z)$ is a distinguished neighborhood of a point $q \in \hat X,$ then the induced $\Gamma$-coverings over $\cU_q$ and over $C(Z)$ are trivial (since the cone induces a nullhomotopy).
It follows that if we have a representation of $\Gamma$ on $\bbR^N$  and we form the flat bundle $E = \hat X' \btimes_{\Gamma} \bbR^N$ and the twisted de Rham operator $\eth_{\dR}^E$ then over a distinguished neighborhood we can identify
\begin{equation*}
	E\rest{\cU_q} \cong \cU_q \times \bbR^N, \quad \eth_{\dR}^E\rest{\cU_q} = \eth_{\dR}\rest{\cU_q} \otimes \Id_{\bbR^N}
\end{equation*}
since not only the bundle $E$ is trivial over $\cU_q$ but also its connection. (In this last equality and below we abuse notation, as we should more correctly conjugate $\eth_{\dR}^E$ by the identification of $E\rest{\cU_q}$ with $\cU_q \times \bbR^N$ to obtain $\eth_{\dR}\rest{\cU_q} \otimes \Id_{\bbR^N}.$)

We can use this to define a domain for $\eth_{\dR}^E$ for each mezzoperversity $\cW.$
Indeed, on any distinguished neighborhood we have $L^2(\cU_q;E) = L^2(\cU_q)^N,$ so it makes sense to define
\begin{multline*}
	\cD_{\cW}(\eth_{\dR}^E) = \{ u \in L^2(X; \Lambda^*\Iie T^*X \otimes E): 
	\eth_{\dR}^Eu \in L^2(X; \Lambda^*\Iie T^*X \otimes E) \text{ and, } \\
	\text{ for all } f \in \CI(\hat X) \text{ supported in a distinguished neighborhood, }
	fu \in \cD_{\cW}(\eth_{\dR})^N \}.
\end{multline*}
Notice that, because $\cD_{\cW}(\eth_{\dR})$ is local, we have $\cD_{\cW}(\eth_{\dR}^E) = \cD_{\cW}(\eth_{\dR})$ when $E$ is the trivial flat $\bbR$-bundle.
On the other hand each domain $\cD_{\cW}(\eth_{\dR}^E)$ is local by construction.

In the same way, let us denote the flat connection on $E$ by $d^E$ and define
\begin{multline*}
	\cD_{\cW}(d^E) = \{ u \in L^2(X; \Lambda^*\Iie T^*X \otimes E): 
	d^Eu \in L^2(X; \Lambda^*\Iie T^*X \otimes E) \Mand, \\
	\text{ for all } f \in \CI(\hat X) \text{ supported in a distinguished neighborhood, }
	fu \in \cD_{\cW}(d)^N \}.
\end{multline*}
Since, by construction, these domains are just copies of the untwisted domains near each singular stratum, Theorem 5.6 in \cite{ALMP13.1} implies the following:

\begin{theorem}
Let $(X,g,\cW)$ be a stratified space endowed with an $\iie$-metric $g$ and adapted Hodge mezzoperversity. 
Let $\Gamma$ be a countable, finitely generated, finitely presented group, $\hat X' \lra \hat X$ a $\Gamma$ covering, and $E \lra X$ a flat bundle associated to an orthogonal representation of $\Gamma$ on $\bbR^N.$
The operator $(\eth_{\dR}^E, \cD_{\cW}(\eth_{\dR}^E))$ is self-adjoint and its domain is compactly included in $L^2(X; \Lambda^*\Iie T^*X \otimes E),$ hence $\eth_{\dR}^E$ is Fredholm with compact resolvent. In particular the corresponding Hodge cohomology groups
\begin{equation*}
	\cH^*_{\cW}(\hat X;E)
\end{equation*}
are finite dimensional.

The operators $(d^E, \cD_{\cW}(d^E))$ define a Fredholm Hilbert complex and hence finite dimensional de Rham cohomology groups
\begin{equation*}
	\tH^*_{\cW}(\hat X;E),
\end{equation*}
which are canonically isomorphic to the Hodge cohomology groups.
The domains $\cD_{\cW}(d^E),$ and hence these groups, are independent of the choice of metric and depend on $\cW$ only through the de Rham mezzoperversity $[\cW].$
\end{theorem}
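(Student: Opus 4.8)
The plan is to deduce every assertion from the corresponding untwisted statements in \cite{ALMP13.1} (Theorems 5.6 and 5.9) by exploiting the fact, recalled above, that over any distinguished neighborhood $\cU_q \cong \bbR^h\times C(Z)$ of $\hat X$ the flat bundle $E$ \emph{together with its connection} is trivial, since the cone factor supplies a nullhomotopy. First I would make this reduction precise: after the identification $E\rest{\cU_q}\cong\cU_q\times\bbR^N$ one has $\eth_{\dR}^E\rest{\cU_q} = \eth_{\dR}\rest{\cU_q}\otimes\Id_{\bbR^N}$ and $d^E\rest{\cU_q}=d\rest{\cU_q}\otimes\Id_{\bbR^N}$, so the locally imposed conditions defining $\cD_{\cW}(\eth_{\dR}^E)$ and $\cD_{\cW}(d^E)$ read $\cD_{\cW}(\eth_{\dR})^N$ and $\cD_{\cW}(d)^N$. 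On overlaps the two flat trivializations differ by a \emph{constant} orthogonal transition matrix, which commutes with $\eth_{\dR}\otimes\Id$ and preserves $\cD_{\cW}(\eth_{\dR})^N$, so these conditions patch to a well-defined domain, independent of the chosen cover. Consequently the entire inductive apparatus of \cite{ALMP13.1} transcribes: every $u\in\cD_{\max}(\eth_{\dR}^E)$ has at each $H^i$ a distributional expansion $u\sim x^{-f^i/2}(\alpha_i(u)+dx\wedge\beta_i(u))+\wt u$ with $\alpha_i(u),\beta_i(u)$ in the appropriate (twisted) Sobolev sections over $Y^i$, and $u\in\cD_{\cW}(\eth_{\dR}^E)$ is characterized by the Cheeger ideal boundary conditions $\alpha_i(u)\in W^i\otimes\bbR^N$, $\beta_i(u)\in(W^i)^\perp\otimes\bbR^N$.

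Granting this, I would then read off each conclusion. For self-adjointness, $\cD_{\min}(\eth_{\dR}^E)\subseteq\cD_{\cW}(\eth_{\dR}^E)\subseteq\cD_{\max}(\eth_{\dR}^E)$, and for $u,v\in\cD_{\max}(\eth_{\dR}^E)$ the boundary defect $\ang{\eth_{\dR}^E u,v}-\ang{u,\eth_{\dR}^E v}$ localizes near the $H^i$ (using the $\CI(\hat X)$-partition of unity and boundedness of $[\eth_{\dR}^E,f]$) to a pairing of the Cauchy data, where $W^i\otimes\bbR^N$ and $(W^i)^\perp\otimes\bbR^N$ form a maximal self-dual pair exactly as untwisted; this is Theorem 5.6 of \cite{ALMP13.1} tensored with $\bbR^N$. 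For the compact inclusion $\cD_{\cW}(\eth_{\dR}^E)\hookrightarrow L^2(X;\Lambda^*\Iie T^*X\otimes E)$, the inductive Rellich-type estimate of \cite{ALMP13.1} uses only the asymptotic expansions and compactness over the regular part and is unaffected by the fixed finite-rank factor $\bbR^N$; hence $\eth_{\dR}^E$ is Fredholm with compact resolvent and discrete spectrum, and $\cH^*_{\cW}(\hat X;E)=\ker\eth_{\dR}^E$ is finite dimensional. For the complex, $d^E\circ d^E=0$ by flatness, and $d^E:\cD_{\cW}(d^E)^{[k]}\to\cD_{\cW}(d^E)^{[k+1]}$ together with $\cD_{\cW}(\eth_{\dR}^E)=\cD_{\cW}(d^E)\cap\cD_{\cW}(\delta^E)$ follow from the local identification with $N$ copies of the untwisted complex; its Laplacian is $(\eth_{\dR}^E)^2$ on $\cD_{\cW}(\eth_{\dR}^E)$, which has compact resolvent, so the Hilbert complex $(\cD_{\cW}(d^E)^{[*]},d^E)$ is Fredholm and the Hodge decomposition yields the canonical isomorphism $\tH^*_{\cW}(\hat X;E)\cong\cH^*_{\cW}(\hat X;E)$ and finite dimensionality.

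For metric independence and dependence only on $[\cW]$, I would invoke Theorem 5.9 of \cite{ALMP13.1} through the same localization. If $g,g'$ are $\iie$ metrics with adapted Hodge mezzoperversities $\cW,\cW'$ representing the same de Rham mezzoperversity, then $L^2(X;\Lambda^*\Iie T^*X\otimes E;g)=L^2(X;\Lambda^*\Iie T^*X\otimes E;g')$ since the metrics are quasi-isometric on $\Iie TX$ and $E$ carries a fixed metric; near each $H^i$ the Hodge--de Rham isomorphism identifying $W^i$ with $(W^i)'$ tensors with $\Id_{\bbR^N}$ to identify the local defining conditions, whence $\cD_{\cW}(d^E)=\cD_{\cW'}(d^E)$ and $\tH^*_{\cW}(\hat X;E)=\tH^*_{\cW'}(\hat X;E)=:\tH^*_{[\cW]}(\hat X;E)$.

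I expect the only genuine work to lie in the consistency bookkeeping of the first paragraph: checking that the domain defined by conditions on distinguished neighborhoods is independent of the distinguished cover and of the flat trivializations, and — more delicately — that the triviality of $(E,\nabla^E)$ is compatible with the inductive structure on links, where the link of a link reappears (cf.\ \S1.2 of \cite{ALMP13.1}) and one must know the induced flat bundle there is again trivial for the same cone-nullhomotopy reason, so that the induced domains $\cD_{\cW(Z^i)}(\eth_{\dR}^{E,Z^i})$ used in the inductive definition are $N$ copies of the untwisted ones. Once that is secured, the remainder is a direct transcription of \cite[Theorems 5.6 and 5.9]{ALMP13.1}.
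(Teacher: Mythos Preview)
Your proposal is correct and follows exactly the paper's approach: the paper's entire justification is the single sentence preceding the theorem, ``Since, by construction, these domains are just copies of the untwisted domains near each singular stratum, Theorem 5.6 in \cite{ALMP13.1} implies the following,'' and you have carefully unpacked precisely this reduction. Your treatment is in fact more detailed than the paper's own, including the consistency check on overlaps (where the transition matrices are locally constant orthogonal, a harmless refinement of your ``constant'') and the explicit invocation of Theorem~5.9 of \cite{ALMP13.1} for the metric-independence clause.
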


A key fact for this paper is that all of the above is true if we replace the finite rank bundle $E \lra \hat X$ with certain bundles of projective finitely generated modules over a $C^*$-algebra.
Let us assume that we have a $\Gamma$-covering as above, but now consider the action of $\Gamma$ on $C^*_r\Gamma,$ the reduced $C^*$-algebra of $\Gamma$ (i.e., the closure of the algebra of operators generated by the elements of $\Gamma$ in the left regular representation),
to define a bundle $\sG(r)$ of free left $C^*_r\Gamma$-modules of rank one:
\begin{equation}\label{sGDef}
	\sG(r) = C^*_r\Gamma \times_\Gamma \hat X'
\end{equation} 
The vector space of smooth sections with compact support of the bundle $\Lambda^*\Iie T^*X \otimes \sG(r) 
\rightarrow X$ is endowed with the (usual) $C^*_r \Gamma-$scalar product defined by:
$$
\langle \sum_{\gamma_1} a_{\gamma_1} \gamma_1 ;  \sum_{\gamma_2} b_{\gamma_2} \gamma_2 \rangle = 
\sum (a_{\gamma_1} ; \gamma_1 \cdot b_{(\gamma_2)^{-1}})_{L^2} \,\, \gamma_1 (\gamma_2)^{-1} \in C^*_r \Gamma\,.
$$ Its completion with respect to $\langle \cdot ; \cdot \rangle$ is denoted $L^2(X; \Lambda^*\Iie T^*X \otimes \sG(r)).$ (Note that this notation is slightly different from that employed in \cite{ALMP11}.)

The trivial connection on $C^*_r\Gamma \times \hat X'$ induces a flat connection on $\sG(r),$ and we denote the corresponding twisted exterior derivative (i.e., the flat connection) by $d^{\sG(r)}$ and the twisted de Rham operator by $\eth_{\dR}^{\sG(r)}.$
As above, we can use localization to define a domain $\cD_{\cW}(\eth_{\dR}^{\sG(r)})$ for every mezzoperversity $\cW.$ 
For example over a distinguished neighborhood we can identify
\begin{equation*}
	\sG(r) \rest{\cU_q} \cong \cU_q \times C^*_r\Gamma, \quad \eth_{\dR}^{\sG(r)}\rest{\cU_q} = \eth_{\dR}\rest{\cU_q} \otimes \Id_{C^*_r\Gamma}
\end{equation*}
Then the previous theorem implies (cf. \cite[Proposition 6.3]{ALMP11}):

\begin{theorem}\label{thm:HigherSA}
With the notation of the previous theorem,
the operator $(\eth_{\dR}^{\sG(r)}, \cD_{\cW}(\eth_{\dR}^{\sG(r)}))$ is self-adjoint and its domain is $C^*_r \Gamma-$compactly included in $L^2(X; \Lambda^*\Iie T^*X \otimes \sG(r) );$ in particular it has a $C^*_r \Gamma-$compact resolvent. 
\end{theorem}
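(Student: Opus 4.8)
The plan is to deduce the theorem from the previous one by exploiting the structural fact recorded just above it: over any distinguished neighborhood $\cU_q$ the bundle $\sG(r)$, together with its flat connection, is trivial, so that $\eth_{\dR}^{\sG(r)}\rest{\cU_q}=\eth_{\dR}\rest{\cU_q}\otimes\Id_{C^*_r\Gamma}$ and the Hilbert $C^*_r\Gamma$-module of sections of $\Lambda^*\Iie T^*X\otimes\sG(r)$ over $\cU_q$ is the Hilbert-module completion of $L^2(\cU_q;\Lambda^*\Iie T^*X)\otimes C^*_r\Gamma$. Concretely I would re-run the proof of the previous theorem with $C^*_r\Gamma$ in place of $\bbR^N$ and with ``compact operator'' replaced throughout by ``$C^*_r\Gamma$-compact operator'', then isolate the two places where something genuinely new is needed. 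That $\cD_{\cW}(\eth_{\dR}^{\sG(r)})$ is a dense \emph{local} domain on which $\eth_{\dR}^{\sG(r)}$ is symmetric is immediate from the construction exactly as in the finite-rank case, since $[\eth_{\dR}^{\sG(r)},f]$ is again a bounded bundle endomorphism (a combination of exterior and interior multiplication by $df$, cf. the computation preceding \eqref{eq:DmaxIsLocal}) for $f\in\CI(\hat X)$.

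For self-adjointness I would fix a finite partition of unity $\{\phi_j\}\subset\CI(\hat X)$ subordinate to a cover of $\hat X$ by distinguished neighborhoods $\cU_{q_j}$. Locality of the domain and boundedness of the commutators $[\eth_{\dR}^{\sG(r)},\phi_j]$ reduce the computation of the adjoint domain to the local models $\eth_{\dR}\rest{\cU_{q_j}}\otimes\Id_{C^*_r\Gamma}$ on $\cD_{\cW}(\eth_{\dR})\rest{\cU_{q_j}}\otimes C^*_r\Gamma$. The one new ingredient is the standard Hilbert-module lemma that if $(T,\Dom(T))$ is self-adjoint on a Hilbert space, then $T\otimes\Id_{C^*_r\Gamma}$ is a \emph{regular} self-adjoint operator on the Hilbert-module completion of $\Dom(T)\otimes C^*_r\Gamma$ (Hilsum-Skandalis; Lance). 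Since $(\eth_{\dR},\cD_{\cW}(\eth_{\dR}))$ is self-adjoint by the previous theorem, each local model is regular self-adjoint, and patching the pieces $\phi_j u$ identifies $\cD_{\cW}(\eth_{\dR}^{\sG(r)})$ with its adjoint domain and yields regularity of $\eth_{\dR}^{\sG(r)}$.

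For the $C^*_r\Gamma$-compactness of the inclusion $\cD_{\cW}(\eth_{\dR}^{\sG(r)})\hookrightarrow L^2(X;\Lambda^*\Iie T^*X\otimes\sG(r))$ --- equivalently, of the resolvent --- the new ingredient is a Rellich-type observation. Examining the proof of the corresponding statement for a finite-rank bundle $E$, one sees that it factors the inclusion through a continuous inclusion of the domain into an intermediate space whose embedding into $L^2$ is compact, and this embedding localizes: cut off by any $\phi\in\CI(\hat X)$ supported in a distinguished neighborhood it becomes a compact operator carried by a single chart. Over such a chart the corresponding twisted operator has the form $K\otimes\Id_{C^*_r\Gamma}$ on the \emph{free} module $L^2(\cU_{q_j};\Lambda^*\Iie T^*X)\otimes C^*_r\Gamma$ with $K$ Hilbert-space-compact; for rank-one $K$ the operator $K\otimes\Id_{C^*_r\Gamma}$ is itself a rank-one operator on the free module, hence $C^*_r\Gamma$-compact, and the general case follows by norm limits. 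A finite sum of $C^*_r\Gamma$-compact operators being $C^*_r\Gamma$-compact, the inclusion is $C^*_r\Gamma$-compact; equivalently, one assembles a parametrix for $\eth_{\dR}^{\sG(r)}\pm i$ out of local parametrices, each of the form $Q_j\otimes\Id_{C^*_r\Gamma}$, and checks that the remainder is $C^*_r\Gamma$-compact in the same way --- this is the viewpoint of \cite[Proposition 6.3]{ALMP11} and, ultimately, of the Mischenko-Fomenko calculus.

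The hard part is this last step. Self-adjointness is essentially formal once the tensor-with-identity lemma is available, but $C^*$-compactness of an inclusion does \emph{not} survive tensoring for general Hilbert modules; the argument genuinely uses that $\sG(r)$ is locally free of rank one, and it requires checking that the compactness proof of \cite{ALMP13.1} is sufficiently local --- that the weights, cut-offs, rescalings, and inductive reductions to the links used there all respect the factorization $(\,\cdot\,)\otimes C^*_r\Gamma$ --- so that every compact operator it produces is, globally, a finite sum of operators of the form $K\otimes\Id_{C^*_r\Gamma}$ with $K$ Hilbert-space-compact. Granting that, the conclusion, including the existence of a $C^*_r\Gamma$-compact resolvent, follows.
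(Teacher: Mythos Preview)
Your proposal is correct and follows precisely the approach the paper indicates: the paper does not write out a proof at all, stating only that ``the previous theorem implies'' the result, with a parenthetical reference to \cite[Proposition~6.3]{ALMP11}. Your argument --- localizing via a partition of unity in $\CI(\hat X)$, using that over each distinguished neighborhood the operator is $\eth_{\dR}\otimes\Id_{C^*_r\Gamma}$ on a free module, invoking the regular-self-adjointness of $T\otimes\Id$ for self-adjoint $T$, and obtaining $C^*_r\Gamma$-compactness from the fact that $K\otimes\Id_{C^*_r\Gamma}$ is $C^*_r\Gamma$-compact on a free module when $K$ is Hilbert-space compact --- is exactly the content of that cited proposition, so you have supplied the details the paper omits.
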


%%%%%%%%%%%%%%%%%%%%%%%%
\section{Stratified homotopy invariance of twisted de Rham cohomology} \label{HomotopyInv} 
%%%%%%%%%%%%%%%%%%%%%%%%

We define morphisms between iterated fibration structures and show that they induce pull-back maps on $L^2$-cohomology.
Iterated edge metrics are used both to define the cohomologies and to define the pull-back map.
We show that a (smooth codimension preserving) stratified homotopy equivalence induces an equivalence in $L^2$-cohomology. For stratified diffeomorphisms, this map is just pull-back of differential forms, but for a general homotopy equivalence we use the `Hilsum-Skandalis replacement' construction from \cite{ALMP11, Hilsum-Skandalis}.

%%%%%%%%%%%%%%%%%%%%%%%%%%%%%
\subsection{Gauss-Manin connection}\label{sec:GMConn}
%%%%%%%%%%%%%%%%%%%%%%%%%%%%%

It will be useful to have a more invariant description of the flat connections on the vertical cohomology bundles of the links, so we start by describing this.\\

Let $(\hat X,g,\cW)$ be a stratified space equipped with an $\iie$ metric and a (de Rham) mezzoperversity $\cW.$
Each hypersurface $H^i$ of $\wt X,$ the resolution of $\hat X,$ is the total space of a fibration
\begin{equation*}
	H^i \xlra{\phi_i} Y^i
\end{equation*}
with fiber $\wt Z^i,$ the resolution of the link of $\hat X$ at $Y^i.$

For any $\ell \in \bbN_0,$ the vertical cohomology of $L^2$-de Rham cohomology with Cheeger ideal boundary conditions forms a bundle
\begin{equation*}
	\tH^\ell_{\cW(Z^i)}(Z^i) -
	\tH^{\ell}_{\cW(Z^i)}(H^i/Y^i) \lra Y^i
\end{equation*}
and we described in \cite{ALMP13.1} a flat connection for this bundle using a connection for $\phi_i.$

This is a version of the Gauss-Manin connection for $\phi_i$ which, as is well-known \cite{Katz-Oda},  can be obtained through the Leray-Serre spectral sequence without using a connection for $\phi_i.$ We now review this construction in the $L^2$-setting.

\begin{lemma}
There is a natural {\bf $L^2$-Gauss-Manin connection} $\nabla^{\tH}$ on the vertical cohomology bundle 
\begin{equation*}
	\tH^{\ell}_{\cW(Z^i)}(H^i/Y^i) \lra Y^i
\end{equation*}
that does not depend on the choice of $\iie$-metric or on a choice of connection for $\phi_i.$
\end{lemma}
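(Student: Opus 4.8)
The plan is to construct $\nabla^{\tH}$ via the Leray--Serre filtration, following the classical argument of Katz--Oda \cite{Katz-Oda} transplanted to the $L^2$-de Rham complex with Cheeger ideal boundary conditions, and then to verify flatness and independence of choices by a direct check. Concretely, I would work locally on a trivializing chart $\cU \subseteq Y^i$ over which $H^i\rest{\cU} \cong \cU \times \wt Z^i$ and the fiberwise Cheeger domains assemble into the bundle $\tH^{\ell}_{\cW(Z^i)}(H^i/Y^i)$. The total complex $\cD_{\cW}(d)^{[*]}$ on $H^i$ carries the filtration $F^p$ by forms that vanish when contracted with $p$ vertical vector fields; this is the same filtration whether or not one picks a connection for $\phi_i$, since it only refers to the subbundle $\phi_i^* T^*Y^i \subseteq {}^{\iie}T^*H^i$. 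The associated graded is $\Lambda^* T^*Y^i \otimes \Lambda^*_{\mathrm{fib}}$, and the $d_1$ differential of the spectral sequence is exactly the fiberwise de Rham differential with Cheeger boundary conditions, so $E_1 = \Lambda^* T^*Y^i \otimes \tH^*_{\cW(Z^i)}(H^i/Y^i)$ and the $d_1$-differential on $E_1$ is, by definition, a first-order operator $\Lambda^0 T^*Y^i \otimes \tH \to \Lambda^1 T^*Y^i \otimes \tH$ that squares to zero modulo fiberwise exact forms: this operator is $\nabla^{\tH}$.

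The key steps, in order: (1) Recall from \cite{ALMP13.1} that the fiberwise operator $(\eth_{\dR}^{Z^i}, \cD_{\cW(Z^i)}(\eth_{\dR}^{Z^i}))$ is Fredholm with discrete spectrum and the fiberwise Hodge cohomology $\cH^*_{\cW(Z^i)}(Z^i_y)$ varies smoothly in $y\in Y^i$, so that $\tH^*_{\cW(Z^i)}(H^i/Y^i)$ is genuinely a smooth bundle; this is needed for $d_1$ to make sense as a differential operator on sections. (2) Set up the filtration $F^p\cD_{\cW}(d)^{[*]}(H^i)$ and check that $d$ respects it — this is where one must be careful that differentiating in the base direction does not leave the Cheeger domain; the point is that the domains $\cD_{\cW}$ are local (they satisfy the analogue of \eqref{eq:DmaxIsLocal}) and the boundary conditions at the deeper strata of $\wt Z^i$ are imposed fiberwise, hence are preserved by base derivatives. (3) Identify $E_1$ and extract the connection $\nabla^{\tH}$ as the induced $d_1$, taking $\nabla^{\tH}[\omega] := [d\tilde\omega]$ where $\tilde\omega\in F^0$ is any lift of a harmonic (or closed) representative $\omega$ of a section of $\tH$; well-definedness modulo $F^2 + d(F^0)$ gives a $\Lambda^1$-valued class. (4) Flatness: $(\nabla^{\tH})^2$ is the $d_2$-type obstruction, and $d^2 = 0$ on the total complex forces $(\nabla^{\tH})^2 = 0$ on $E_1$ after passing to cohomology — equivalently, any two lifts differ by something in $F^1$, whose $d$ lands in $F^1$, so the curvature is fiberwise exact, hence zero in $\tH$. (5) Independence of the $\iie$-metric: a different adapted metric $g'$ gives the same de Rham mezzoperversity and, by \cite[Theorem 5.9]{ALMP13.1}, the same domains $\cD_{\cW}(d)$, hence the same filtered complex and the same $d_1$; independence of a connection for $\phi_i$ is automatic because the filtration $F^p$ never used one. (6) Finally, reconcile this with the connection built in \cite{ALMP13.1} using a chosen connection for $\phi_i$ — show the two agree by noting that on the Hodge representatives the covariant derivative along a horizontal lift, followed by fiberwise harmonic projection, computes the same class as $[d\tilde\omega]$.

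The main obstacle I expect is step (2): verifying that the Leray filtration is genuinely $d$-stable at the level of the \emph{domains} $\cD_{\cW}(d)$, not just the smooth forms. The subtlety is that $\cD_{\cW}(d)^{[*]}(H^i)$ on a hypersurface carrying its own iterated fibration structure is defined by boundary conditions at the strata of $\wt Z^i$, and one must ensure that the operation ``differentiate a section of the vertical cohomology bundle in a base direction'' — which underlies $d_1 = \nabla^{\tH}$ — lands back in the correct domain. This requires knowing that the Cheeger boundary data $\alpha_\bullet, \beta_\bullet$ depend smoothly (and in the right function-space sense) on the base parameter $y\in Y^i$, which in turn rests on the smooth dependence of the small-eigenvalue spectral projections and of the harmonic forms $\cH^*_{\cW(Z^i)}$ in the fiber; all of this is available from the analysis in \cite{ALMP13.1}, but assembling it cleanly — rather than re-deriving a connection from a chosen $\phi_i$-connection, which would be circular given what the lemma asserts — is the delicate part. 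A clean way to sidestep re-deriving estimates is to \emph{define} $\nabla^{\tH}$ abstractly via $E_1$ of the spectral sequence and then only check that it is first-order and flat, deferring all the hard analytic input to the already-established Fredholmness and smooth fiber-dependence.
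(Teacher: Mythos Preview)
Your proposal is correct and follows essentially the same route as the paper: construct the connection as the $E_1$-differential of the Cartan/Leray--Serre filtration on the $L^2$-de Rham complex with Cheeger ideal boundary conditions, deduce flatness from the vanishing of the square, and obtain metric- and connection-independence from the fact that the filtration and the domain $\cD_{\cW(Z^i)}(d_{H^i})$ depend on neither. Two small points: you have an indexing slip (the fiberwise differential is $d_0$ on $E_0$, not $d_1$; the connection is $d_1$ on $E_1$), and the paper makes the ``$\partial_1$ is a connection'' step explicit by checking the Leibniz rule against the $\Omega_c^*(Y^i)$-module structure, which you should do rather than merely asserting that $d_1$ is first order---your step (6) is then unnecessary, since the paper never compares with a connection-dependent construction.
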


\begin{proof}
We describe the smooth construction in parallel to the $L^2$ construction as the former makes the latter more transparent.
Recall that we have shown that the cohomology groups $\tH^{\ell}_{\cW(Z^i)}(H^i/Y^i)$ only depend on the de Rham mezzoperversity $\cW(Z^i)$ and not on the $\iie$ metric.

Recall that the $\iie-$geometry is defined in Section 1. We then have a short exact sequences of bundles over $H^i,$ %
\begin{equation}\label{eq:BundleSES}
\begin{gathered}
	0 \lra \Iie TH^i/Y^i \lra \Iie TH^i \lra \phi^*(\Iie TY^i)  \lra 0 \\
	0 \lra \phi^*(\Iie T^*Y^i) \lra \Iie T^*H^i \lra \Iie T^*(H^i/Y^i) \lra 0
\end{gathered}
\end{equation}
which induce short exact sequences of sections over $H^i,$ e.g., 
\begin{equation*}
\begin{gathered}
	0 \lra \CI(H^i;\Lambda^* \phi^*(\Iie T^*Y^i)) \lra \CI(H^i; \Lambda^* \Iie T^*H^i) \xlra{\pi_{\infty}} \CI(H^i; \Lambda^* \Iie T^*(H^i/Y^i)) \lra 0 \\
	0 \lra L^2(H^i;\Lambda^* \phi^*(\Iie T^*Y^i)) \lra L^2(H^i; \Lambda^* \Iie T^*H^i) \xlra{\pi_{L^2}} L^2(H^i; \Lambda^* \Iie T^*(H^i/Y^i)) \lra 0.
\end{gathered}
\end{equation*}
For our purposes we need the corresponding sequence for $L^2$-differential forms in the domain of $d,$
\begin{equation}\label{eq:DomainSES}
	0 \lra \cD_{\cW(Z^i)}(d_{H^i}) \cap L^2( H^i; \Lambda^*(\phi^*\Iie TY^i) )
	\lra \cD_{\cW(Z^i)}(d_{H^i})
	\lra \pi_{L^2}(\cD_{\cW(Z^i)}(d_{H^i})) \lra 0.
\end{equation}
It is useful to observe that each of 
\begin{equation*}
	\CI(H^i; \Lambda^*(\Iie T^*H^i) ), \quad L^2(H^i; \Lambda^*( \Iie T^*H^i ) ), \quad \cD_{\cW(Z^i)}(d_{H^i})
\end{equation*}
is a module over $\phi^*\CIc(Y^i).$

Note that since we are not imposing boundary conditions on the boundary of $Y^i,$ the first non-zero term in \eqref{eq:DomainSES} can be identified with
\begin{equation*}
	\{ \omega \in L^2( H^i; \Lambda^*(\phi^*\Iie TY^i) ) : d\omega \in L^2( H^i; \Lambda^*(\phi^*\Iie TY^i) ) \},
\end{equation*}
while the last non-zero term can be thought of as forms whose restriction to each fiber is in $\cD_{\cW}(Z^i)(d_{Z_i}).$ We can make this precise by using the invariance under multiplication by $\CIc(Y^i)$ to localize to a neighborhood and mollify in $Y^i$ and thereby obtain a dense subdomain of forms which can be restricted to an individual fiber.

The exact sequence \eqref{eq:BundleSES} induces Cartan's filtration
\begin{multline*}
	 F_j^\ell \CI 
	= \CI( H^i; \Lambda^j(\phi^*(\Iie T^*Y^i)) ) \;\hat\otimes\; \CI( H^i;\Lambda^{\ell - j}(\Iie T^*H^i) )\\
	= \text{ degree $\ell$ differential forms on $H^i$ with `horizontal degree' at least } j
\end{multline*}
which can also be described by
\begin{equation*}
	F_j^{\ell}\CI = \{\omega \in \CI( H^i;\Lambda^{\ell}(\Iie T^*H^i) ) : \omega(V_1, \ldots, V_\ell)=0 \Mwhenever \ell-j+1 \text{ of the $V_k$ are vertical } \},
\end{equation*}
where a vector field is vertical when it is a section of $TH^i/Y^i.$
We similarly obtain filtrations of $L^2$ differential forms
\begin{equation*}
	F_j^{\ell}L^2 = \{\omega \in L^2( H^i;\Lambda^{\ell}(\Iie T^*H^i) ) : \omega(V_1, \ldots, V_\ell)=0 \Mwhenever \ell-j+1 \text{ of the $V_k$ are vertical } \}.
\end{equation*}
and $F_{j, \cW(Z^i)}^{\ell}= F_j^\ell L^2 \cap \cD_{\cW(Z^i)}(d).$
These filtrations are compatible with the exterior derivative
\begin{equation*}
	d: F_j^*\CI \lra F_j^*\CI, \quad d:F_j^* L^2 \cap \cD_{\max}(d) \lra F_j^*L^2, \quad d: F_j^*\cD_{\cW(Z^i)} \lra F_j^*\cD_{\cW(Z^i)}
\end{equation*}
so there are associated spectral sequences.
As in \cite{Katz-Oda} we show that the differential on the $E_1$ page of the spectral sequence yields the connection we are looking for.

We can use  \eqref{eq:BundleSES} to compute $E_0,$ the graded complex associated to the filtration.
First note that
\begin{multline*}
	E_0^{j,k}\CI = F_j^{j+k}\CI / F_{j+1}^{j+k}\CI \\
	= \CI(H^i; \Lambda^j(\phi^*T^*Y^i)) \;\hat\otimes\; \CI(H^i;\Lambda^{k}T^*(H^i/Y^i))
	= \CI \lrpar{ H^i; \Lambda^j(\phi^*T^*Y^i) \;\hat\otimes\; \Lambda^{k}T^*(H^i/Y^i) }
\end{multline*}
inherits a differential 
\begin{equation*}
	\pa_{0,\CI}^{j,k}: E_0^{j,k}\CI \lra E_0^{j,k+1}\CI,
\end{equation*}
that only raises the `vertical degree'.
Moreover since the individual terms in the Cartan filtration are $\phi^*\CIc(Y^i)$-modules, this differential is linear over $\phi^*\CIc(Y^i).$
This allows us to localize to a contractible neighborhood of $Y^i$ and identify $\pa_{0,\CI}^{j,k}$ with the vertical exterior derivative,
\begin{equation*}
	\pa_{0,\CI}^{j,k} = d_{H^i/Y^i}.
\end{equation*}
Similarly we can identify
\begin{equation*}
	E_0^{j,k}L^2 
	= L^2(H^i; \Lambda^j(\phi^*T^*Y^i) \;\hat\otimes\; \Lambda^{k}T^*(H^i/Y^i) )
\end{equation*}
and
\begin{equation*}
	\pa_{0,L^2}^{j,k} = d_{H^i/Y^i}
\end{equation*}
is the densely defined vertical exterior derivative.
The same holds, {\em mutatis mutandis}, on $E_0^{j,k}\cD_{\cW(Z^i)} = F_j^{j+k}\cD_{\cW(Z^i)} / F_{j+1}^{j+k}\cD_{\cW(Z^i)}.$

The $E_1$ page is the cohomology of the $E_0$ page with the induced differential, so by the description of the latter we have
\begin{equation*}
	E_1^{j,k}\CI = \frac{\ker \pa_{0,\CI}^{j,k}}{\Image \pa_{0,\CI}^{j,k-1}} = 
	\CI( Y^i; \Lambda^j T^*Y^i \;\hat\otimes\; \tH^k_{\CI}(H^i/Y^i) )
	=: \Omega^j( Y^i; \tH^k_{\CI}(H^i/Y^i) )
\end{equation*}
Similarly
\begin{equation*}
\begin{gathered}
	E_1^{j,k}\cD_{\cW(Z^i)} = 
	\{ \omega \in L^2(Y^i; \Lambda^j T^*Y^i \;\hat\otimes\; \tH^k_{\cW(Z^i)}(H^i/Y^i) ) : d\omega \in L^2(Y^i; \Lambda^j T^*Y^i \;\hat\otimes\; \tH^k(H^i/Y^i) ) \} \\
	=: \Omega^j_{L^2}( Y^i; \tH^k_{\cW(Z^i)}(H^i/Y^i) )
%	= \cD_{\max}^{[j]}\lrpar{ d_{Y^i; \tH^k_{\cW(Z^i)}(H^i/Y^i)} }
\end{gathered}
\end{equation*}
The exterior derivative on $H^i$ induces a differential on this page 
\begin{equation}\label{eq:E1diffs}
\begin{gathered}
	\pa_{1, \CI}^{j,k}: 
	\Omega^j( Y^i; \tH^k_{\CI}(H^i/Y^i) ) \lra
	\Omega^{j+1}( Y^i; \tH^k_{\CI}(H^i/Y^i) ) \\
	\pa_{1, L^2}^{j,k}: 
	\Omega^j_{L^2}( Y^i; \tH^k_{\cW(Z^i)}(H^i/Y^i) ) \lra
	\Omega^{j+1}_{L^2}( Y^i; \tH^k_{\cW(Z^i)}(H^i/Y^i) )
\end{gathered}
\end{equation}
by restriction from $\CI(H^i; \Lambda^* \Iie T^*H^i)$ to the kernel of the composition 
\begin{equation*}
	F_j^{j+k}\CI \lra F_j^{j+k+1}\CI \lra F_j^{j+k+1}\CI/ F_{j+1}^{j+k+1}\CI
\end{equation*}
and similarly for $L^2$-forms.

Note that 
$\CIc(Y^i; \Lambda^*T^*Y^i) = \Omega_c^*(Y^i)$ 
is a subset of both $\Omega^*( Y^i; \tH^0_{\CI}(H^i/Y^i) )$ and, because of the compact support, also of
$\Omega^*_{L^2}( Y^i; \tH^0_{\cW(Z^i)}(H^i/Y^i) ).$
Both of the $E_1$ pages are closed under exterior product with a form in $\Omega_c^j(Y^i)$ and satisfy
\begin{equation*}
	\omega \in \Omega_c^{j'}(Y^i), \eta \in E_1^{j,k} \implies 
	\pa_1^{j+j',k}(\omega \wedge \eta) = d\omega \wedge \eta +(-1)^{j'} \omega \wedge \pa_1^{j,k}\eta
\end{equation*}
with their respective differentials.
However this shows that 
\begin{equation*}
\begin{gathered}
	\pa_{1,\CI}^{0,k}: \CI(Y^i; \tH^k_{\CI}(H^i/Y^i)) \lra  \Omega^1( Y^i; \tH^k_{\CI}(H^i/Y^i) ) \\
	\pa_{1,L^2}^{0,k}: \Omega^0_{L^2}(Y^i; \tH^k_{\cW(Z^i)}(H^i/Y^i)) \lra  \Omega^1_{L^2}( Y^i; \tH^k_{\cW(Z^i)}(H^i/Y^i) ) 
\end{gathered}
\end{equation*}
are connections and that the maps \eqref{eq:E1diffs} are induced by these connections.
In particular $\pa_{1}^{1,k} \circ \pa_{1}^{0,k}$ is exterior multiplication by the curvature of the connection, and  since $\pa_1$ is a differential, the connections are flat.
We denote these connections by $\nabla^{\tH}$ in either case.

Finally note that the connection on $\tH^k_{\cW(Z^i)}(H^i/Y^i)$ is independent of choices since all rigid, suitably scaled, $\iie$ metrics yield the same $L^2$ space and the same $\cD_{\cW(Z^i)}(d_{H^i}).$
\end{proof}

\begin{remark}\label{rmk:GaussManin}
It follows from the proof of this lemma that a map on differential forms between the total spaces of two fibrations will induce maps intertwining the Gauss-Manin connection if:\\
i) it commutes with the exterior derivative, and\\
ii) it is compatible with Cartan's filtration, i.e., it takes degree $k$ forms pulled-back from the base to degree $k$ forms pulled-back from the base.
\end{remark}

%%%%%%%%%%%%%%%%%%%%%%%%%%%%%
\subsection{Iterated fibration morphisms}\label{sec:IfsHom}
%%%%%%%%%%%%%%%%%%%%%%%%%%%%%

Given a smooth map $f:\wt X \lra \wt M$ between manifolds with corners and iterated fibration structures, we can pull-back differential forms on $\wt M$ to differential forms on $\wt X.$ However this will not pull-back $\ie$-forms to $\ie$-forms (nor $\iie$-forms to $\iie$-forms) unless $f$ is a $b$-map whose restriction to each boundary hypersurface is a fiber bundle map. For this reason we think of these maps as the `morphisms' in this setting.

Recall from Definition \ref{def:SmoothBFun} that $F \in \CI_b(\hat X, \hat M)$ means that $F \in \cC^0(\hat X, \hat M)$ has a lift
\begin{equation*}
	\wt F: \wt X \lra \wt M
\end{equation*}
which is a smooth $b$-map between manifolds with corners; note that $\wt F$ necessarily preserves the boundary fibration structures in that its restriction to a boundary hypersurface is a fibre bundle map.
The differential of a smooth $b$-map extends by continuity from the interior to a map between the iterated edge tangent bundles 
\begin{equation*}
	D\wt F: {}^{\ie}TX \lra {}^{\ie}TM
\end{equation*}
and we say that $F$ is an {\bf $\ie$-submersion} if this map is surjective.
 %{\em when restricted to the interior}.\red{This needs changing ....}
As an example, consider a boundary hypersurface $H$ in a manifold with corners $\wt M$ endowed with an iterated fibration 
structure. By definition, there is a fibration $H \xlra \phi \wt Y$, and both $H$ and $\wt Y$ inherit boundary fibration structures 
from $\wt M$.  It follows, directly from the definitions, that $\phi$ is a smooth $b$-map.

The exponential mapping associated to an $\ie$ metric $g$ provides another important example of an $\ie$-submersion.
We explain this carefully. First note that the unit ball bundle ${}^{\ie}\bbB M \subset {}^{\ie}TM$ is an open manifold with an 
iterated fibration structure described as follows. To each boundary hypersurface $H$ of $\wt M$ corresponds a boundary hypersurface
\begin{equation*}
	\Ie \bbB_HM = \Ie\bbB M\rest{H}
\end{equation*}
in $\Ie \bbB M$, which is is the total space of a fibration $\psi:  \Ie \bbB_HM \lra \wt Y$, where $\psi$ is the composition of the maps
\begin{equation*}
	\Ie \bbB_HM \lra \Ie\bbB H \xlra{D\phi} \Ie \bbB Y \lra \wt Y
\end{equation*}
(the first map here is orthogonal projection and $D\phi$ is the differential of the fibration $\phi:H \lra \wt Y$).  

\begin{lemma} The exponential map $\exp: \Ie \bbB M\rest{M} \lra M$ 
extends uniquely to a smooth $\ie$-submersion 
\begin{equation*}
	\bar\exp: \Ie\bbB M \lra \wt M,
\end{equation*}
which for each boundary hypersurface $H$ fits into the commutative diagram 
\begin{equation}\label{eq:expcoversid}
	\xymatrix{
	\Ie \bbB_HM \ar[rr]^-{\bar\exp\rest{H}}  \ar[d]^{\psi} & & H \ar[d]^{\phi} \\
	\wt Y \ar[rr]^-{\id} & & \wt Y}
\end{equation}
\end{lemma} 
\begin{proof} We first prove this when $M$ has a simple edge, so that the tubular neighborhood of its singular stratum
is a bundle of cones, each of which has a smooth compact cross-section. In this case, the resolution $\wt M$ is a manifold 
with fibered boundary:
$$ \phi: \partial \wt M \rightarrow  Y\,,$$ 
with typical fiber $Z$.  Introduce local coordinates $(x,y,z)$ so that 
$$
g = \frac{ dx^2}{x^2} + \frac{\phi^* g_Y}{x^2} + h, 
$$ 
where $h$ is a symmetric $2$-tensor orthogonal to $dx/x$ which restricts to a smooth family of metrics on each fiber 
$Z \simeq \phi^{-1} \{y\}$.  
%The orthogonal complement $H_p$ to the vertical tangent space at any point $p$ is spanned by the vectors $\partial_{y_j} + 
%A_p (\partial_{y_j})$, where $A_p: T_{y} Y \rightarrow T_z Z$, $p = (x,y,z)$. 

Now let $\gamma(t) = (x(t), y(t), z(t))$ denote a geodesic starting at some point $p_0 = (x_0, y_0, z_0)$ with $x_0 >0$ and with
initial tangent vector $(\dot{x}_0, \dot{y}_0, \dot{z}_0) \in T_{p_0} M$ with $g$-norm no greater than $1$. We analyze the 
behavior of the first two coordinates $x(t)$ and $y(t)$ as the parameter $x_0 \searrow 0$. 
% Observe that
% $$
% (y'(t), z'(t)) = (y'(t), A_{z(t)} y'(t) )+ (0, z'(t)- A_{z(t)} y'(t)) \in T_{z(t)}Z\oplus T_{\gamma(t)}Z^\perp,
% $$ 
% and clearly, 
% $$
% g(y'(t), A_{z(t)} y'(t) )= \frac{g_Y( y'(t))}{x(t)^2}\, .
% $$
Indeed, from $|\gamma'(t)|_g\leq 1$ we obtain 
\[
\left|\frac{x'(t)}{x(t)}\right| \leq C, \quad g_Y(y'(t), y'(t)) \leq C x(t)^2,
\]
whence $x(t) = x_0 B_1(t)$, where $|B_1(t)| \leq C$ independently of initial conditions and for $|t| \leq \epsilon_0$.  Then from
$|y'(t)| \leq x(t)B_2(t)$, where $B_2(t)$ is similarly bounded, we obtain finally that $y(t) = y_0 + x_0 B_3(t)$. 
% $$
% + \frac{g_Y( y'(t))}{x(t)^2} + h_{y(t)} ( z'(t)-A_{z(t)} y'(t))\, \leq 1, \quad 0 \leq t < \epsilon_0.
% $$ 
% We can then write 
% $$
% \frac{x'(t)}{x(t)} = \rho(t) \cos \phi(t) ,\ \mbox{and}\ \  \frac{g_Y( y'(t))}{x(t)^2} = \rho(t)^2 \sin^2 \phi(t), \, 
% $$ 
% where $\rho(t) \in [0,1]$. One gets 
% $$
% x(t)= x_0 \exp { \int_0^t \rho(u) \cos \phi(u)\, du }\,. 
% $$ 
% Then, since $g_Y( y'(t))= x(t)^2 \rho(t)^2 \sin^2 \phi(t)$, one  gets: $y'(t)= x_0 B(t)$ where the norm of $B(t)$ remains bounded independently on 
% $x_0, y_0, t\in [0,\epsilon]$. Then, one gets:
% $$
% y(t)= y_0 + x_0 C(t) \, ,
% $$ where the norm of $C(t)$ remains bounded independently on 
% $x_0, y_0, t$.
Now letting $x_0 \to 0$, one obtains that $x(t)= 0, y(t)= y_0$ for all $t\in [0,\epsilon]$ for all geodesics starting at a point
$(0, y_0, z_0)$.  This proves the result for depth one spaces.

Next assume that $M$ has depth two. Using the notation above, replace $Y$ by $Y_0$ and assume that $Z$ is a bundle of cones 
$C(Z_1)$ over the smooth manifold $Z_1$. In local coordinates $(x_0, y_0,x_1, y_1, z_1)$, the complete metric $g$ takes the form
$$
\frac{ d x_0^2}{ x_0^2 x_1^2} + \frac{ \phi_0^* g_{Y_0}}{x_0^2 x_1^2} + \frac{d x_1^2}{ x_1^2} + \frac{\phi_1^* g_{Y_1}}{ x_1^2} + h_{Z_1}\,.
$$ 
A geodesic $\gamma(t) = (x_0(t), y_0(t), x_1(t), y_1(t), z_1(t))$ with $|\gamma'(t)| \leq 1$ satisfies
\[
\left| \frac{x_0'(t)}{x_0(t)x_1(t)}\right| \leq C, \quad g_{Y_0}(y_0'(t), y_0'(t)) \leq C (x_0(t) x_1(t))^2,\quad 
\left| \frac{x_1'(t)}{x_1(t)}\right| \leq C, \quad g_{Y_1}(y_1', y_1') \leq C x_1(t)^2.
\]
Arguing as before, 
\begin{multline*}
x_1(t) = x_1(0) B_1(t), \quad x_0(t) = x_0(0)x_1(0) B_0(t),  \\ y_1(t) = y_1(0) + x_1(0) \tilde{B}_(t), \quad
y_0(t) = y_0(0) + x_0(0)x_1(0)\tilde{B}_0(t), 
\end{multline*}
where the coefficient functions are all uniformly bounded and smooth.  This implies the corresponding conclusion in this case as well.

The general case is very similar and is left to the reader. 

 \end{proof}

It follows that $\bar\exp_M$ restricts to a map of each fiber of $\phi,$ and indeed it is easy to see that this is
the exponential map for the induced $\ie$-metric on the fiber, i.e., 
\begin{equation*}
	\xymatrix{
	\Ie \bbB_HM\rest{\phi^{-1}(q)} \ar[rr]^{\bar\exp_M} \ar[rd]_{\pi} &  & \phi^{-1}(q)\\
	& \Ie\bbB \phi^{-1}(q) \ar[ru]_{\bar\exp_{\phi^{-1}(q)}} & }
\end{equation*}
where $\pi$ is the orthogonal projection.
\subsection{Hilsum-Skandalis maps} \label{sec:HSmaps}
%%%%%%%%%%%%%%%%%%%%%%%%%%%%%

If $\hat X$ and $\hat M$ are smoothly stratified spaces and $F \in \CI_b(\hat X,\hat M)$ then pull-back of forms defines a map
\begin{equation*}
	F^*: \CI(M; {}^{\iie}\Lambda^*) \lra \CI(X; {}^{\iie}\Lambda^*)
\end{equation*}
which unfortunately need not act continuously on $L^2.$
However we explained in \cite[\S9.3]{ALMP11}, following \cite{Hilsum-Skandalis}, how to replace $F^*$ with a `Hilsum-Skandalis map'
\begin{equation*}
	HS(F): \CI(M; {}^{\iie}\Lambda^*) \lra \CI(X; {}^{\iie}\Lambda^*)
\end{equation*}
which extends to a bounded operator on $L^2.$
Recall that this map is defined on ${}^{\ie}\Omega^*$ (and then on ${}^{\iie}\Omega^*$) using an $\ie$-metric $g_M$ on $M$ 
and a Thom form $\cT_M$ for ${}^{\ie}\bbB M \lra M,$ by:
 first defining $\pi_X$ and $\bbB(F)$ through the natural diagram
\begin{equation*}
	\xymatrix{
	F^*({}^{\ie}\bbB M) \ar[rr]^-{\bbB(F)} \ar[d]^{\pi_X} & & {}^{\ie}\bbB M \ar[d]^{\pi_M} \\
	X \ar[rr]^-{F} & & M }
\end{equation*}
and then setting
\begin{equation*}
	HS(F)(\omega) 
	= (\pi_X)_*( \bbB(F)^*\cT_M \wedge (\bar\exp_M \circ \bbB(F) )^*\omega)
	\Mforall \omega \in \CI(M; {}^{\ie}\Lambda^*).
\end{equation*}

We will make use of a boundary version of this map.
Let $Y_X$ be a singular stratum of $X$ and let $F$ map $Y_X$ into $Y_M$ a singular stratum of $M.$
With the notation above, the lift of $F$ acting between the resolutions of $X$ and $M$ participates in
\begin{equation}\label{eq:wtFcoversF}
	\xymatrix{
	H_X \ar[d]^{\phi_X} \ar[r]^{\wt F} & H_M \ar[d]^{\phi_M} \\
	Y_X \ar[r]^F & Y_M }
\end{equation}
We let $J: \Ie \bbB H_M \hookrightarrow \Ie \bbB_{H_M}M$ be the inclusion and define
\begin{equation*}
	HS_{Y_X}(\wt F)(\omega) 
	= (\pi_X\rest{H_X})_*( \bbB(\wt F)^*\cT_M\rest{H_M} \wedge (\bar\exp_M\rest{H_M} \circ J \circ \bbB(\wt F) )^*\omega)
	\Mforall \omega \in \CI(H_M; {}^{\ie}\Lambda^*  T^*H_M).
\end{equation*}
The fact that the exponential map on $M$ limits to the exponential map on the fibers of $\phi_M$ indicates that the map $HS(F)$ should limit to $HS_{Y_X}(F)$ which we now make precise.

\begin{lemma}\label{lem:RegPullBack}
Given $q\in Y_X,$ let $\cU_{F(q)} \cong [0,1)_{x_M} \times \bbB \times Z^M_{F(q)}$ be a distinguished neighborhood of $F(q) \in Y_M,$ and let $\omega \in \cD_{\cW}(d_M),$ supported in $\cU_{F(q)},$ have the form
\begin{equation*}
	\omega = \phi x_M^{-f/2}(\alpha(\omega)+dx_M\wedge\beta(\omega)) + \wt \omega
\end{equation*}
where $\phi \in \CI(\hat M)$ is a smooth cutoff function, $\alpha$ and $\beta$ are smooth in $\bbB$ and independent of $x_M$ (as sections of $\Iie T^*M$), and $\wt\omega$ is in $x_ML^2(M;\Lambda^* \Iie T^*M).$
Note that $\alpha(\omega)$ and $\beta(\omega)$ both necessarily have vertical degree $f/2.$

Then $\eta = HS(F)\omega$ has an expansion at $Y_X$ with leading term
\begin{equation*}
	x_X^{-f/2}(\alpha(\eta) + dx_X \wedge \beta(\eta))
\end{equation*}
and we have
\begin{equation}\label{eq:BdyHSAction}
	x_X^{-f/2}\alpha(\eta)\rest{H_X} = HS_{Y_X}(\wt F)( x_M^{-f/2}\alpha(\omega)\rest{H_M} ).
\end{equation}

\end{lemma}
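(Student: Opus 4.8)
The plan is to compute $HS(F)\omega$ explicitly near $Y_X$ using the distinguished-neighborhood product structure, and to identify the leading asymptotics by tracking how each ingredient of the Hilsum--Skandalis construction (the Thom form $\cT_M$, the exponential map $\bar\exp_M$, and the fiber integration $(\pi_X)_*$) degenerates at the boundary hypersurface $H_X$. The key geometric input is \eqref{eq:expcoversid} and the diagram below it: $\bar\exp_M$ covers the identity on $Y_M$ and restricts on the fibers of $\phi_M$ to the exponential map of the induced $\ie$-metric on the fiber. Consequently, on the collar $[0,1)_{x_M}\times \bbB \times Z^M_{F(q)}$, the composite $\bar\exp_M\circ \bbB(\wt F)$ splits, up to terms that improve the power of $x_M$, into a base part (which converges to the identity factor $\bbB \xrightarrow{F}\bbB$) and a fiber part (which limits to $\bar\exp_M\rest{H_M}\circ J\circ \bbB(\wt F)$). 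This is exactly what is needed to see that pulling back the model form $\phi x_M^{-f/2}(\alpha(\omega)+dx_M\wedge\beta(\omega))$ produces a form of the same type in $x_X$, since $\wt F^*(x_M) = h\, x_X$ with $h$ smooth and positive (as $\wt F$ is a $b$-map whose restriction to $H_X$ is a fiber bundle map over $F$), so $\wt F^*(x_M^{-f/2})$ contributes $h^{-f/2}x_X^{-f/2}$ and $\wt F^*(dx_M) = x_X\, dh + h\, dx_X$ contributes the $dx_X\wedge\beta(\eta)$ term at leading order.

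The steps, in order. First I would record the local form of all the players over $\cU_{F(q)}$: the bundle ${}^{\ie}\bbB M$ trivializes compatibly with the collar, the Thom form $\cT_M$ can be chosen to respect the product (so that $\cT_M\rest{H_M}$ is the Thom form used in $HS_{Y_X}(\wt F)$), and $F^*({}^{\ie}\bbB M)$ and $\bbB(F)$ fiber over $Y_X$ via $\pi_X\rest{H_X}$. Second, I would expand $(\bar\exp_M\circ\bbB(\wt F))^*\omega$: substitute the assumed expansion of $\omega$, use that $\alpha(\omega),\beta(\omega)$ are $x_M$-independent and smooth in $\bbB$, and separate the leading $x_M^{-f/2}$ term from the remainder $\wt\omega \in x_M L^2$; the remainder pulls back into $x_X L^2$ and contributes nothing to the leading asymptotics after fiber integration because $(\pi_X)_*$ is bounded on the relevant weighted spaces. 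Third, I would wedge with $\bbB(\wt F)^*\cT_M$ and push forward along $\pi_X$; the fiber integration is over the fibers of ${}^{\ie}\bbB M \to M$, which near the boundary are transverse to the $x_M$-direction, so the power $x_X^{-f/2}$ is unaffected and commutes out of the integral. Fourth, I would restrict to $H_X = \{x_X=0\}$: by \eqref{eq:expcoversid} every "base" factor becomes the identity in the fiber of $\phi_M$ over $F(q)$, the exponential map collapses to $\bar\exp_M\rest{H_M}$ composed with the inclusion $J$, and what survives is precisely the definition of $HS_{Y_X}(\wt F)$ applied to $x_M^{-f/2}\alpha(\omega)\rest{H_M}$; the $dx_M\wedge\beta(\omega)$ piece and the $dh$ correction do not survive the restriction to $x_X=0$ when one extracts the coefficient of $x_X^{-f/2}$ without a $dx_X$, giving \eqref{eq:BdyHSAction}. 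One also has to check that $\eta$ genuinely has an expansion of the stated type, but this follows from $\eta \in \cD_{\cW}(d_X)$ (which will be established in the surrounding development, or follows from $HS(F)$ mapping the domain to the domain) together with the asymptotic expansion theorem of \cite{ALMP13.1} quoted in \S\ref{sec:Recall}.

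The main obstacle will be Step 3--4: controlling the interchange of the fiber integration $(\pi_X)_*$ with the restriction to $H_X$ and with extraction of the leading singular coefficient. The fibers of ${}^{\ie}\bbB M \to M$ are themselves degenerating at the boundary (the $\ie$-ball bundle has the boundary fibration $\Ie\bbB_H M \to \Ie\bbB H \xrightarrow{D\phi} \Ie\bbB Y \to Y$ described above \eqref{eq:expcoversid}), so one must be careful that "restrict then integrate" equals "integrate then restrict" at the level of leading terms --- this is where the compatibility of the Thom form with the collar and the fact that $\bar\exp_M$ limits to the fiberwise exponential (the commutative triangle after \eqref{eq:expcoversid}) do the real work. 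Everything else is a bookkeeping of powers of $x_M$ versus $x_X$ under the $b$-map $\wt F$, which is routine given that $\wt F\rest{H_X}$ is a fiber bundle map over $F$.
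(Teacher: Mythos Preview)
Your approach is essentially the paper's: dispose of the remainder $\wt\omega$ (it lands in $x_X L^2$ because $\wt F$ is a $b$-map), then track the model term $\phi\,x_M^{-f/2}(\alpha+dx_M\wedge\beta)$ through the Hilsum--Skandalis construction using that $\bar\exp_M$ limits at $H_M$ to the fiberwise exponential and that the Thom form and $(\pi_X)_*$ restrict compatibly. The paper packages your Steps~2--4 a bit more cleanly: it sets $p=\bar\exp_M\circ\bbB(F)$, observes that $p$ is smooth down to $\{x_X=0\}$ with boundary value $p_0=\bar\exp_M\rest{H_M}\circ J\circ\bbB(\wt F)\rest{\pi_X^{-1}H_X}$, and then notes that for any section $\gamma$ of $j^*\Lambda^*\Iie T^*H_M$ one has $p^*\gamma=p_0^*\gamma+dx_X\wedge\gamma'+\gamma''$ with $\gamma''\in\cO(x_X)$. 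This single identity simultaneously gives the form of the expansion and the restriction formula~\eqref{eq:BdyHSAction}, so your ``restrict then integrate versus integrate then restrict'' worry dissolves: everything in sight is smooth to the boundary and restricts to its $H$-version.

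One genuine issue: your closing sentence justifies the \emph{existence} of the expansion of $\eta$ by invoking $\eta\in\cD_{\cW}(d_X)$, i.e., that $HS(F)$ preserves the relevant domains. That is circular --- this lemma is exactly the ingredient used afterwards to prove that $HS(F)(\cD_{\cW_M}(d^M))\subseteq\cD_{\cW_X}(d^X)$. The expansion has to be produced directly by your Steps~2--4, and indeed it is: the identity $p^*\gamma=p_0^*\gamma+dx_X\wedge\gamma'+\cO(x_X)$ already shows that $p^*(x_M^{-f/2}\alpha(\omega))$ has vertical degree $f/2$ at $x_X=0$, so after wedging with $\bbB(\wt F)^*\cT_M$ and pushing forward one reads off the $x_X^{-f/2}$ leading term without appealing to any domain membership. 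Drop that last sentence and your argument is complete.
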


\begin{proof}
Note first that we can ignore the `error term' $\wt\omega;$ indeed, since $\wt F$ is a $b$-map, the pull-back $x_M$ under $\wt F$ is
a product of nonnegative integer powers of defining functions, one of is certainly $x_X$. Thus $HS(F)\wt \omega \in 
x_X L^2(X;\Lambda^* \Iie T^*X).$

If $\psi$ is a smooth function on $\wt M$ that is constant on the fibers of the boundary fibration, then $HS(F)(\psi)$ is a smooth function on $\wt X$ that is constant on the fibers of the boundary fibrations. Indeed, the value of $HS(F)(\psi)$ at a point on the fiber over $q \in Y_X$ is an average of the values of $\psi$ on the fiber over $F(q) \in Y_M,$ but $\psi$ is constant on the fibers.

In particular, $HS(F)(\phi)$ will be a smooth cut-off function on $X$ with support in a distinguished neighborhood $\cU_q$ of $q,$ and independent of $x_X$ in a small enough distinguished neighborhood of $q.$

Next note that since $\alpha(\omega)$ is independent of $x_M$ as a section of $\Iie T^*M,$ $x_M^{-f/2}\alpha(\omega)\rest{H_M}$ is naturally a section of $\Lambda^*\Iie T^*H_M,$ and so the right hand side of \eqref{eq:BdyHSAction} makes sense.
In fact it is convenient to interpret both 
\begin{equation*}
	\wt\alpha(\omega) = x_M^{-f/2}\alpha(\omega)
	\Mand
	\wt\beta(\omega) = x_M^{-f/2}\beta(\omega)
\end{equation*}
as sections of $\Lambda^*\Iie T^*H_M$ pulled-back to $\cU_{F(q)},$ which we denote $j^*\Lambda^*\Iie T^*H_M.$

The function
\begin{equation*}
	p = \bar \exp_M \circ \bbB(F)
\end{equation*}
is smooth down to $\{x_X=0\}$ where it restricts to
\begin{equation*}
	p_0 = \bar\exp_M\rest{H_X} \circ \bbB(\wt F)\rest{\pi_X^{-1}H_X}.
\end{equation*}
Given a section $\gamma$ of $j^*\Lambda^*\Iie T^*H_M,$ we can pull it back by either $p$ or $p_0,$ and clearly we have
\begin{equation*}
	p^*\gamma = p_0^*\gamma + dx \wedge \gamma' + \gamma'', \Mforsome \gamma', \gamma'' \Mwith |\gamma''| \leq C x_X.
\end{equation*}
This shows that $p^*(x^{-f/2}\alpha(\omega))$ will have vertical degree $f/2$ at $x_X=0,$ and so the left hand side of \eqref{eq:BdyHSAction} makes sense and is naturally interpreted as a section of $\Lambda^* \Iie T^*H_X.$
It also establishes the equality \eqref{eq:BdyHSAction} since the Thom form over $H_M$ is the restriction of the Thom form over $M,$ and the push-forward is along the restriction of $\pi_X$ to $H_X.$

\end{proof}

%%%%%%%%%%%%%%%%%%%%%%%%%%%%%
\subsection{Pull-back of mezzoperversities}
%%%%%%%%%%%%%%%%%%%%%%%%%%%%%

Let $(\hat X, g_X)$ and $(\hat M, g_M)$ be smoothly stratified spaces endowed with $\iie$ metrics and resolutions $\wt X$ and 
$\wt M$ respectively. Recall from Definition \ref{def:StratPreserving} that a smooth codimension preserving map between them
\begin{equation*}
	F \in \CI_{b,cod}(\hat X, \hat M)
\end{equation*}
is a map $F \in \cC^0(\hat X, \hat M)$ that lifts to a smooth $b$-map $\wt F \in \CI(\wt X, \wt M)$ such that the codimension of a stratum of $\hat M$ coincides with the codimension of its inverse image in $\hat X.$ Given such a map, we use the boundary Hilsum-Skandalis maps to define the pull-back of a mezzoperversity from $\hat M$ to $\hat X$ and then show that it is a mezzoperversity on $\hat X.$

Let $Y^1_X, \ldots, Y^T_X$ be an ordering of the strata of $\hat X$ by non-decreasing depth, and let
\begin{equation*}
	Y^i_M \text{ be the stratum of $M$ containing $F(Y^i_X)$}
\end{equation*}
Note that the list $Y^1_M, \ldots, Y^T_M$ may repeat strata of $\hat M,$ and may not have all of the strata of $\hat M.$
Let $H^i_X$ and $H^i_M$ be the boundary hypersurfaces of $\wt X$ and $\wt M$ sitting above $Y^i_X$ and $Y^i_M$ respectively, and denote the fibrations by
\begin{equation*}
	Z^i_X - H^i_X \xlra{\phi^i_X} Y^i_X, \quad
	Z^i_M - H^i_M \xlra{\phi^i_M} Y^i_M.
\end{equation*}
For each stratum $Y^i_M,$ let $W^i_M = W(Y^i_M)$ be the flat subbundle of $\tH^{\mid}_{\cW_M(Z^i_M)}(H^i_M/Y^i_M)$ in $\cW_M.$
Also, let use denote by 
\begin{equation*}
	\cH^{\ell}_{\cW(Z^i_M)}(H^i_M/Y^i_M)
\end{equation*}
the bundle of vertical Hodge cohomology with boundary conditions.\\

Note that, at each stratum $Y^i_X$ and each $q \in Y^i_X,$ the induced map on the fibers
\begin{equation*}
	\wt F_q = \wt F\rest{(Z^i_X)_q} : (Z^i_X)_q \lra (Z^i_M)_{F(q)}
\end{equation*}
is itself a smooth codimension-preserving $b$-map. Indeed, it is clearly a smooth map and the `codimension-preserving' assumption is that it preserves the dimension of the links, but `the link of a link is a link' so this is automatic.

In particular, since the fibers $Z^1_X$ are depth zero stratified spaces so are their images, i.e., they are both smooth closed manifolds.
For $q\in Y^1_X,$ it follows from \cite{ALMP11} and is easy to see directly, that $HS(\wt F_q)$ induces a map in de Rham cohomology, $[HS(\wt F_q)],$ that only depends on the homotopy class of $\wt F_q.$

\begin{definition} We set:
\begin{equation*}
	F^{\sharp}( \tH^{\ell}(H^1_M/Y^1_M) )_q = [HS(\wt F_q)] \lrpar{ \tH^{\ell}( (Z^1_M)_{F(q)} ) } \subseteq \tH^{\ell}( (Z^1_X)_{q} ), 
\end{equation*}
\end{definition}
It follows from the previous definition that
\begin{equation*}
	F^{\sharp}( \tH^{\ell}(H^1_M/Y^1_M) ) = 
	\bigcup_{q \in Y^1_X} F^{\sharp}( \tH^{\ell}(H^1_M/Y^1_M) )_q
\end{equation*}
is a vector subbundle of $\tH^{\ell}( H^1_X/Y^1_X )$ over $Y^1_X.$
Notice that the pull-back maps $HS(\wt F_q)$ fit together to form $HS_{Y^1_X}(\wt F)$ which, from  \eqref{eq:expcoversid} and \eqref{eq:wtFcoversF} satisfies
\begin{equation*}
	HS_{Y^1_X}(\wt F) (\phi_M^1)^*\CI(Y^1_M, \Lambda^jT^*Y^1_M) \subseteq 
	(\phi_X^1)^*\CI(Y^1_X, \Lambda^jT^*Y^1_X).
\end{equation*}
%
%Indeed, from  \eqref{eq:expcoversid} and \eqref{eq:wtFcoversF} we see that the maps involved in $HS_{Y^1_X}(\wt F)$ preserve the fibers of the fibrations.
It follows, by Remark \ref{rmk:GaussManin}, that the map $HS_{Y^1_X}(\wt F)$ intertwines the Gauss-Manin connections.
Thus $F^{\sharp}( \tH^{\ell}(H^1_M/Y^1_M) )$ is a flat subbundle of  $\tH^{\ell}( H^1_X/Y^1_X )$ with its natural flat structure.

Reasoning in the same way, if we define
\begin{equation*}
	W(Y^1_X) = F^{\sharp}( W(Y^1_M) ) = 
	\bigcup_{q \in Y^1_X} [HS(\wt F_q)] \lrpar{ W(Y^1_M)_{F(q)}  }
\end{equation*}
then $W(Y^1_X)$ is a flat subbundle of  $\tH^{\mid}( H^1_X/Y^1_X )$ with its natural flat structure.
Moreover it follows from Lemma \ref{lem:RegPullBack} that 
\begin{equation*}
	HS(F) \cD_{\cW_M}^{\reg}(\eth_{\dR}^M) \subseteq \cD_{\max, \cW(Z^1_X)}(d^X)
\end{equation*}
where the space on the right is defined in \eqref{eq:DefDmaxW1}, and the space used on the left is defined in \cite[Lemma 2.2]{ALMP13.1} as
\begin{multline*}
	\cD_{\cW_M}^{\reg}(\eth_{\dR}^M) = \{ u \in \cD_{\cW_M}(\eth_{\dR}^M) : \text{ in a distinguished neighborhood $\cU_q = \bbB\times [0,1] \times Z$} \\
	\text{ around a point of a stratum $Y,$ the terms in its asymptotic expansion are smooth on $\bbB$} \}.
\end{multline*}

Now since $HS(F)$ intertwines $d^M$ and $d^X$ and is a bounded map on $L^2,$ it follows that we can replace the domain on the left with its graph closure with respect to $d^M$ and obtain
\begin{equation*}
	HS(F) \cD_{\cW_M}(d^M) \subseteq \cD_{\max, \cW(Z^1_X)}(d^X).
\end{equation*}
These considerations suggests the following definition and theorem, and indeed constitute most of the proof for depth one pseudomanifolds.

\begin{definition}
At the non-Witt strata of $X$ we define
\begin{equation*}
	W(Y^i_X) = F^{\sharp}W(Y^i_M) = 
	\bigcup_{q \in Y^i_X} HS(\wt F_q)( W(Y^i_M)_{F(q)} )
\end{equation*}
and we refer to the set
\begin{equation*}
	\cW_X = F^{\sharp}\cW_M = \{ W(Y^i_X) \}.
\end{equation*}
as the {\bf pull-back of the mezzoperversity $\cW_M$ by $F.$}
\end{definition}

\begin{theorem}\label{thm:UntwistedHS}
Let $\hat X,$ $\hat M,$ $F$ be as above.
The pull-back of a mezzoperversity $\cW_M$ on $M$ is itself a mezzoperversity on $X,$
\begin{equation*}
	\cW_X = F^{\sharp}\cW_M.
\end{equation*}
The Hilsum-Skandalis replacement of $F$ satisfies
\begin{equation*}
	HS(F) \cD_{\cW_M}(d^M) \subseteq \cD_{\cW_X}(d^X), \quad
	HS(F) d^M = d^X HS(F) 
\end{equation*}
and hence defines a map in de Rham cohomology
\begin{equation}\label{eq:HSCoho}
	[HS(F)]: \tH_{\cW_M}^j(\hat M) \lra \tH_{\cW_X}^j(\hat X), \quad \Mforevery j \in \bbN_0.
\end{equation}
This map is unchanged by varying $F$ smoothly through smooth codimension-preserving $b$-maps.
\end{theorem}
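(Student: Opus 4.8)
The plan is to induct on the depth of $\hat X$. In depth zero, $\hat X$ is a closed smooth manifold; since $F$ is stratum-preserving we may replace $\hat M$ by the closure of the target stratum and assume $\hat M=\hat M^{\reg}$, and then $HS(F)$ is the classical Hilsum--Skandalis replacement of $F^*$, which is $L^2$-bounded, commutes with $d$, and is chain homotopic to $F^*$ (\cite{Hilsum-Skandalis}, \cite[\S9.3]{ALMP11}); it therefore induces $F^*$ on de Rham cohomology, and homotopy invariance in this case is the classical prism argument. So I assume the full statement for all smoothly stratified spaces of depth less than $k$, let $\hat X$ have depth $k$, order its strata $Y^1_X,\dots,Y^T_X$ by non-decreasing depth, and let $Y^i_M$ denote the stratum of $\hat M$ containing $F(Y^i_X)$.

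First I would verify that $\cW_X=F^\sharp\cW_M$ is a mezzoperversity. For each $i$ and each $q\in Y^i_X$ the fiber map $\wt F_q\colon (Z^i_X)_q\lra (Z^i_M)_{F(q)}$ is a smooth codimension-preserving $b$-map between smoothly stratified spaces of depth strictly less than $k$ (a link of a stratum has strictly smaller depth, and ``the link of a link is a link''), so the inductive hypothesis applies to $\wt F_q$ and the induced mezzoperversity $\cW_M(Z^i_M)$: the pull-back $\wt F_q^\sharp\cW_M(Z^i_M)$ is a mezzoperversity on $(Z^i_X)_q$, and the induced map $[HS(\wt F_q)]$ on de Rham cohomology with boundary conditions depends only on the homotopy class of $\wt F_q$. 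Since $\phi^i_X$ and $\phi^i_M$ are locally trivial with stratified structure groups, over a contractible subset of $Y^i_X$ this homotopy class is constant, so the rank of the induced cohomology map is locally constant and $W(Y^i_X)=\bigcup_q[HS(\wt F_q)](W(Y^i_M)_{F(q)})$ is a subbundle of $\tH^{\mid}_{\cW_X(Z^i_X)}(H^i_X/Y^i_X)$. That it is flat follows from Lemma \ref{lem:RegPullBack}: the $\wt F_q$ assemble into the boundary Hilsum--Skandalis map $HS_{Y^i_X}(\wt F)$, which commutes with $d$ and, by \eqref{eq:expcoversid} and \eqref{eq:wtFcoversF}, is compatible with Cartan's filtration, hence intertwines the $L^2$-Gauss--Manin connections by Remark \ref{rmk:GaussManin}. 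The remaining clause in the definition of a mezzoperversity---that $\eth_{\dR}^{Z^i_X}$ with the induced domain have no nonzero eigenvalue of modulus $<1$---is arranged by a rigid conformal rescaling of $g_X$ near the strata as in \cite[\S5.4]{ALMP11}, which changes neither $\cD_{\cW_X}(d^X)$ nor the de Rham cohomology.

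Next I would show $HS(F)\cD_{\cW_M}(d^M)\subseteq \cD_{\cW_X}(d^X)$. The chain-map identity $HS(F)d^M=d^X HS(F)$ and the $L^2$-boundedness of $HS(F)$ (both from its construction) imply that $HS(F)$ maps $\cD_{\max}(d^M)$ into $\cD_{\max}(d^X)$; since membership in either refined domain is a local condition near each stratum it suffices to study $\eta=HS(F)\omega$ near each $Y^i_X$. Off the singular set there is nothing to prove. Near $Y^i_X$, given $\omega\in\cD_{\cW_M}(d^M)$ supported in a distinguished neighborhood, its leading term at $Y^i_M$ has the form of Lemma \ref{lem:RegPullBack} with $\alpha(\omega)\rest{H^i_M}$ valued in $W^i_M$; by \eqref{eq:BdyHSAction} the leading term of $\eta$ at $Y^i_X$ satisfies $x_X^{-f/2}\alpha(\eta)\rest{H^i_X}=HS_{Y^i_X}(\wt F)\bigl(x_M^{-f/2}\alpha(\omega)\rest{H^i_M}\bigr)$, which is valued in $W^i_X$ by the definition of $\cW_X$ together with the inductive fact that $HS(\wt F_q)$ carries $\cW_M(Z^i_M)$-boundary data to $\cW_X(Z^i_X)$-boundary data. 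This is precisely the condition on $\alpha$ imposed by membership in $\cD_{\cW_X}(d^X)$, so (passing to the graph closure in the $d^X$-norm exactly as in the depth-one computation preceding the statement) $\eta\in\cD_{\cW_X}(d^X)$. Combined with the chain-map identity, this produces a well-defined $[HS(F)]\colon \tH^j_{\cW_M}(\hat M)\lra \tH^j_{\cW_X}(\hat X)$ for every $j$.

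For the last assertion, given a homotopy $F_t$ through smooth codimension-preserving $b$-maps I would first note that $F_0^\sharp\cW_M=F_1^\sharp\cW_M=:\cW_X$: the bundles $W(Y^i_X)$ depend only on the homotopy class of the fiber maps and on parallel transport of $W^i_M$ along $t\mapsto F_t(q)$ inside the connected stratum of $\hat M$ containing $F_t(Y^i_X)$, so $F_t^\sharp\cW_M$ is independent of $t$; in particular $[HS(F_0)]$ and $[HS(F_1)]$ have the same target. Forming $\mathcal F\in\CI_{b,cod}(\hat X\times[0,1],\hat M)$ and applying the parts already established, $HS(\mathcal F)$ is a chain map into $\cD_{\mathcal F^\sharp\cW_M}(d^{X\times[0,1]})$; writing $d^{X\times[0,1]}=d^X+dt\wedge\partial_t$ and $K\omega=\int_0^1\iota_{\partial_t}HS(\mathcal F)\omega\,dt$, the prism identity $HS(F_1)-HS(F_0)=d^X K+K d^M$ holds on $\cD_{\cW_M}(d^M)$, and the boundary analysis of the previous step shows $K$ maps into $\cD_{\cW_X}(d^X)$, so $[HS(F_0)]=[HS(F_1)]$. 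I expect the main obstacle to be exactly this propagation of the Cheeger ideal boundary conditions through \emph{every} singular stratum---the step $HS(F)\cD_{\cW_M}(d^M)\subseteq\cD_{\cW_X}(d^X)$ above---rather than through the stratum of lowest depth treated before the statement: one must control the distributional leading coefficients of $HS(F)\omega$ at a stratum whose link is itself singular purely through the fiberwise maps $HS(\wt F_q)$ and the inductive hypothesis, while simultaneously ensuring, via compatibility of the boundary Hilsum--Skandalis maps with the Gauss--Manin connection, that $\cW_X$ emerges as a genuine flat mezzoperversity.
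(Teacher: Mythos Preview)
Your proposal is essentially the paper's own argument: induction on depth, the inductive hypothesis applied to the fiber maps $\wt F_q$ to make $\cW_X$ a mezzoperversity, flatness via compatibility of $HS_{Y^i_X}(\wt F)$ with Cartan's filtration (Remark \ref{rmk:GaussManin}), Lemma \ref{lem:RegPullBack} to control leading terms, and graph closure to pass from a dense subdomain to $\cD_{\cW_M}(d^M)$. One small point of order: Lemma \ref{lem:RegPullBack} is stated for forms whose leading coefficients are \emph{smooth} in the base variables, so you should first establish $HS(F)\,\cD^{\reg}_{\cW_M}(\eth_{\dR}^M)\subseteq \cD_{\cW_X}(d^X)$ and only then take the graph closure in the $d$-norm, rather than invoking the lemma directly on a general element of $\cD_{\cW_M}(d^M)$ with distributional $\alpha,\beta$; you do gesture at this with your parenthetical, but the order in your argument is inverted. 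For homotopy invariance the paper simply cites \cite[Lemma 9.1]{ALMP11}, whose content is precisely the prism construction you wrote out, so your explicit version is fine.
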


\begin{proof}
If $\hat X$ has depth zero, that is, if $\hat X$ is a smooth closed manifold, then since $F$ is codimension-preserving it must map into the smooth part of $\hat M$ and the theorem is classical. Let us inductively suppose that we have established the theorem for spaces of depth less than the depth of $\hat X.$

At each singular stratum $Y^i_X$ of $\hat X,$ point $q\in Y^i_X,$ and for each $\ell \in \bbN_0,$ define 
\begin{equation*}
	F^{\sharp}( \tH^{\ell}_{\cW_M(Z^i_M)}(H^i_M/Y^i_M) )_q = [HS(\wt F_q)] \lrpar{ \tH^{\ell}_{\cW_M(Z^i_M)}( (Z^i_M)_{F(q)} ) }
\end{equation*}
since $Z^i_X$ has depth less than that of $\hat X,$ by our inductive hypothesis this is a subspace of 
\begin{equation*}
	\tH^{\ell}_{\cW_X(Z^i_X)}( (Z^i_X)_{q} ).
\end{equation*}
By homotopy invariance these spaces together form a vector bundle
\begin{equation*}
	F^{\sharp}( \tH^{\ell}_{\cW_M}(H^i_M/Y^i_M) ) = 
	\bigcup_{q \in Y^i_X} F^{\sharp}( \tH^{\ell}_{\cW_M(Z^i_M)}(H^i_M/Y^i_M) )_q,
\end{equation*}
which is flat with respect to the natural flat structure of $\tH^{\ell}_{\cW_X(Z^i_X)}( H^i_X/Y^i_X ).$
It follows that at the non-Witt strata of $X,$ $W(Y^i_X)$ is a flat subbundle of $\tH^{\mid}_{\cW_X(Z^i_X)}( H^i_X/Y^i_X ),$ and so $\cW_X$ is a mezzoperversity on $\hat X.$

At each stratum of $\hat X,$ Lemma \ref{lem:RegPullBack} shows that 
\begin{equation*}
	HS(F) \cD_{\cW_M}^{\reg}(\eth_{\dR}^M) \subseteq \cD_{\cW_X}(d^X).
\end{equation*}
As above, we can replace the domain on the left with its graph closure with respect to $d^M$ and obtain
\begin{equation*}
	HS(F) \cD_{\cW_M}(d^M) \subseteq \cD_{\cW_X}(d^X).
\end{equation*}
Thus we have a map in de Rham cohomology \eqref{eq:HSCoho}. If we vary $F$ smoothly through smooth codimension-preserving $b$-maps, then we can use \cite[Lemma 9.1]{ALMP11} to see that the map in cohomology is unchanged. (Note that although \cite[Lemma 9.1]{ALMP11} is stated for homotopy equivalences, the construction does not use more than that the map is smooth and codimension preserving.)
\end{proof}

Finally, let us discuss the important case of a stratified homotopy equivalence.

\begin{theorem}
Let $(\hat X,g_X),$ $(\hat M,g_M),$ and $(\hat J, g_J)$ be pseudomanifolds endowed with $\iie$ metrics.
Let $F\in \CI_{b,cod}(\hat X,\hat M)$ and $G \in \CI_{b,cod}(\hat M,\hat J),$ and let $\cW_J$ be a de Rham mezzoperversity on $\hat J.$
Then we have
\begin{equation*}
	(G \circ F)^{\sharp}(\cW_J) = F^{\sharp}\lrpar{ G^{\sharp}(\cW_J) }
\end{equation*}
and a commutative diagram
\begin{equation*}
	\xymatrix{
	\tH_{\cW_J}^*(\hat J) \ar[rd]_-{[HS(G)]} \ar[rr]^-{[HS(G \circ F)]} & & \tH_{ (G \circ F)^{\sharp}\cW_J }(\hat X) \\
	& \tH_{G^{\sharp}\cW_J}^*(\hat M) \ar[ur]_-{[HS(F)]} & }.
\end{equation*}

In particular, if $F: \hat X \lra \hat M$ is a stratified homotopy equivalence then $[HS(F)]$ is an isomorphism.
\end{theorem}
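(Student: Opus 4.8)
The plan is to establish the functoriality identity $(G\circ F)^{\sharp}\cW_J = F^{\sharp}(G^{\sharp}\cW_J)$ together with the commutativity of the triangle by a single induction on the depth of $\hat X$, carried out in parallel with the same two statements for the links, and then to deduce the isomorphism assertion formally. In depth zero all three spaces are smooth closed manifolds, $HS(\cdot)$ induces ordinary pull-back on de Rham cohomology, and both claims reduce to the classical facts $(G\circ F)^* = F^*\circ G^*$ and $[HS(F)] = [F^*]$ (see \cite{Hilsum-Skandalis}, \cite[\S9]{ALMP11}). So assume the theorem for all spaces of depth less than that of $\hat X$.

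First I would dispose of the mezzoperversity identity. At a stratum $Y^i_X$ and a point $q\in Y^i_X$, lifting and restricting to fibres is functorial, so $\wt{(G\circ F)}_q = \wt{G}_{F(q)}\circ\wt{F}_q$ as smooth codimension-preserving $b$-maps of links, and $(Z^i_X)_q$ has depth strictly smaller than that of $\hat X$. Hence the inductive hypothesis applied to the triple $(Z^i_X)_q, (Z^i_M)_{F(q)}, (Z^i_J)_{(G\circ F)(q)}$ gives $[HS(\wt{(G\circ F)}_q)] = [HS(\wt{F}_q)]\circ[HS(\wt{G}_{F(q)})]$ on the refined de Rham cohomology of the links. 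Applying this equality to the fibre $W(Y^i_J)_{(G\circ F)(q)}$ and recalling that $\sharp$ is defined one stratum at a time, we get $(G\circ F)^{\sharp}W(Y^i_J) = F^{\sharp}(G^{\sharp}W(Y^i_J))$ as flat subbundles of $\tH^{\mid}_{\cW_X(Z^i_X)}(H^i_X/Y^i_X)$, whence $(G\circ F)^{\sharp}\cW_J = F^{\sharp}(G^{\sharp}\cW_J)$. In particular the target of $[HS(F)]\circ[HS(G)]$ coincides with that of $[HS(G\circ F)]$, so the triangle typechecks.

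To prove that the triangle commutes it remains, by Theorem \ref{thm:UntwistedHS}, to show that the two chain maps $HS(F)\circ HS(G)$ and $HS(G\circ F)$ from $\cD_{\cW_J}(d^J)^{[*]}$ to $\cD_{\cW_X}(d^X)^{[*]}$ are chain homotopic. The Hilsum--Skandalis replacement depends on its auxiliary data --- an $\ie$-metric, a Thom form, an exponential map --- only up to chain homotopy, and $HS(F)\circ HS(G)$ is itself a Hilsum--Skandalis replacement of $G\circ F$ for a composite choice of such data (its construction factors through $F^*(\Ie\bbB M)$ pulled back along $G$); the interpolating chain homotopy is built exactly as the operators themselves, and by the localization and module-over-$\phi^*\CIc(Y)$ arguments of \S\ref{sec:GMConn} together with Lemma \ref{lem:RegPullBack} one checks that it carries $\cD_{\cW_J}(d^J)$ into $\cD_{\cW_X}(d^X)$. (Alternatively, invoke \cite[Lemma 9.1]{ALMP11}, whose proof uses only smoothness and codimension-preservation.) I expect this last verification --- that the chain homotopy respects the Cheeger ideal boundary conditions, which requires knowing inductively that it restricts correctly to every link --- to be the main obstacle.

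Finally, for the isomorphism statement I would reduce to the smooth case. By Theorem \ref{thm:SmoothApp} and its corollaries, a continuous homotopy inverse $g$ of $F$ may be approximated by some $F'\in\CI_{b,cod}(\hat M,\hat X)$ with $F'\circ F$ homotopic to $\id_{\hat X}$ and $F\circ F'$ homotopic to $\id_{\hat M}$ through smooth codimension-preserving $b$-maps. Restricting such a homotopy to the fibre over each point and using fibrewise homotopy invariance of $[HS]$ (the base case, inductively propagated) shows $(F'\circ F)^{\sharp}\cW = \cW$ and $(F\circ F')^{\sharp}\cW = \cW$ for any mezzoperversity $\cW$, and that $[HS(\id)]$ is the identity in cohomology (being chain homotopic to the identity chain map). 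Now fix a mezzoperversity $\cW_M$ on $\hat M$ and set $\cW_X = F^{\sharp}\cW_M$. The functoriality identity gives $(F')^{\sharp}\cW_X = (F')^{\sharp}F^{\sharp}\cW_M = (F\circ F')^{\sharp}\cW_M = \cW_M$, so $[HS(F')]$ maps $\tH^*_{\cW_X}(\hat X)$ to $\tH^*_{\cW_M}(\hat M)$; and the commuting triangle, applied with the roles of the maps as indicated, yields $[HS(F)]\circ[HS(F')] = [HS(F'\circ F)] = \id$ on $\tH^*_{\cW_X}(\hat X)$ and $[HS(F')]\circ[HS(F)] = [HS(F\circ F')] = \id$ on $\tH^*_{\cW_M}(\hat M)$. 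Hence $[HS(F)]$ is an isomorphism with inverse $[HS(F')]$, as claimed.
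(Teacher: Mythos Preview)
Your approach is essentially the same as the paper's, though you have spelled out considerably more detail. The paper's proof consists of a single sentence citing \cite[Lemma~9.2]{ALMP11} for the identity $[HS(G\circ F)] = [HS(F)]\circ[HS(G)]$; your inductive argument for the mezzoperversity identity, the chain-homotopy reasoning for the triangle, and the formal deduction of the isomorphism for homotopy equivalences are precisely what is implicit in that citation together with Theorem~\ref{thm:UntwistedHS}. One small correction: the composition lemma in \cite{ALMP11} is Lemma~9.2, not Lemma~9.1 (the latter is the homotopy-invariance statement you are also using).
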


\begin{proof}
This follows from \cite[Lemma 9.2]{ALMP11} where we showed that $[HS(G \circ F)] = [HS(F)] \circ [HS(G)].$
\end{proof}

%%%%%%%%%%%%%%%%%%%%%%%%
\subsection{Twisted cohomology}
%%%%%%%%%%%%%%%%%%%%%%%%

In this section we will explain how the constructions above define a pull-back map on twisted cohomology.

Let $(\hat M, g_M, \cW_M)$ be a pseudomanifold with an $\iie$ metric and de Rham mezzoperversity.
As in \S\ref{sec:TwistedDeRham}, let $\Gamma$ be a countable, finitely generated, finitely presented group, let $r:\hat M \lra B\Gamma$ be a classifying map that is smooth on $M,$ and $E \lra M$ an associated flat vector bundle.
As described above this defines a closed domain for the flat connection (which we think of as a twisted exterior derivative)
\begin{equation*}
	d^E: \cD_{\cW_M}(d^E) \subseteq L^2(M; \Lambda^*\Iie T^*M \otimes E) \lra L^2(M; \Lambda^*\Iie T^*M \otimes E)
\end{equation*}
with corresponding twisted de Rham cohomology groups, $\tH^*_{\cW_M}(\hat M;E).$

Now assume as in \S\ref{sec:IfsHom} that $(\hat X,g_X)$ is another pseudomanifold with an $\iie$ metric and that
\begin{equation*}
	F \in \CI_{b,cod}(\hat X,\hat M)
\end{equation*}
is a smooth codimension-preserving $b$-map.
The map $F \circ r:\hat X \lra B\Gamma$ defines a flat bundle $F^*E \lra X$ which is just the pull-back bundle of $E$ with its pulled-back connection.
We also have the pulled-back mezzoperversity $\cW_X = F^{\sharp}\cW_M$ and we can assume without loss of generality that the metric $g_X$ is adapted to $\cW_X.$

It is straightforward to see that the Hilsum-Skandalis replacement of $F$ constructed above extends to a map
\begin{equation*}
	HS(F): \CI(M; \Lambda^*\Iie T^*M \otimes E) \lra \CI(X; \Lambda^*\Iie T^*X \otimes F^*E).
\end{equation*}
Directly from its definition, the domain $\cD_{\cW_M}(d^E)$ near a boundary hypersurface is just $N$ copies of the domain $\cD_{\cW_M}(d^M),$
so from Theorem \ref{thm:UntwistedHS} we can see that $HS(F)$ will map it into $\cD_{\cW_X}(d^{F^*E}),$ and thus ultimately induce a map in cohomology.

Just as above, the same considerations work if we use the canonical representation of $\Gamma$ on $C^*_r\Gamma;$ we end up with a pull-back map from the differential forms on $M$ twisted by $\sG(r)$ to the differential forms on $X$ twisted by $\sG(F\circ r).$

\begin{theorem}\label{thm:TwistedCohoIso}
Let $\hat X,$ $\hat M,$ $F,$ and $E$ be as above.
The Hilsum-Skandalis replacement of $F$ satisfies
\begin{equation*}
	HS(F) \cD_{\cW_M}(d^E) \subseteq \cD_{\cW_X}(d^{F^*E}), \quad
	HS(F) d^E = d^{F^*E} HS(F) 
\end{equation*}
and the corresponding map in de Rham cohomology
\begin{equation*}
	[HS(F)]: \tH_{\cW_M}^j(\hat M;E) \lra \tH_{\cW_X}^j(\hat X;F^*E), \quad \Mforevery j \in \bbN_0
\end{equation*}
is unchanged by varying $F$ smoothly through smooth codimension-preserving $b$-maps and is an isomorphism if $F$ is a stratified homotopy equivalence.

Another natural extension of $HS(F)$ satisfies
\begin{equation*}
	HS(F) \cD_{\cW_M}(d^{\sG(r)}) \subseteq \cD_{\cW_X}(d^{\sG(F \circ r)}), \quad
	HS(F) d^{\sG(r)} = d^{\sG(F\circ r)} HS(F) 
\end{equation*}
and the induced map
\begin{equation*}
	HS(F): \ker d^{\sG(r)} / \Im d^{\sG(r)} \lra \ker d^{\sG(F\circ r)} / \Im d^{\sG(F\circ r)}
\end{equation*}
is unchanged by varying $F$ smoothly through smooth codimension-preserving $b$-maps and is an isomorphism if $F$ is a stratified homotopy equivalence.
\end{theorem}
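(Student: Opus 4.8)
The plan is to deduce every assertion from the already-established untwisted statement, Theorem~\ref{thm:UntwistedHS} (together with Theorem~\ref{thm:HigherSA} for the $\sG(r)$ version), by passing to local trivialisations of the flat bundle. Recall from \S\ref{sec:TwistedDeRham} that over any distinguished neighbourhood $\cU_q$ the bundle $E$ and its flat connection are trivial, so $E\rest{\cU_q}\cong\cU_q\times\bbR^N$ with $d^E\rest{\cU_q}=d\rest{\cU_q}\otimes\Id_{\bbR^N}$; correspondingly $L^2(\cU_q;\Lambda^*\Iie T^*M\otimes E)=L^2(\cU_q;\Lambda^*\Iie T^*M)^N$, and near a singular stratum $\cD_{\cW_M}(d^E)$ is identified with $\cD_{\cW_M}(d^M)^N$. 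Moreover both twisted domains are local by construction, so everything to be checked is of a local nature.

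First I would make precise the extension of $HS(F)$ to twisted forms. Since $F\in\CI_{b,cod}(\hat X,\hat M)$ is stratum-preserving, a distinguished neighbourhood $\cU_q$ of $q\in Y^i_X$ is carried into a distinguished neighbourhood $\cU_{F(q)}$ of $F(q)\in Y^i_M$, over which $E$ is trivialised with trivial connection and $F^*E$ is correspondingly trivialised over $\cU_q$. The three ingredients of the construction in \S\ref{sec:HSmaps} --- pull-back by $\bar\exp_M\circ\bbB(F)$, exterior product with the Thom form $\cT_M$, and push-forward along $\pi_X$ --- all make sense with coefficients in a trivialised flat bundle, the pull-back step using the identification of fibres by parallel transport along the short geodesics produced by $\bar\exp_M$, which is canonical because the connection is flat and the relevant ball-bundle neighbourhoods are contractible. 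In the local trivialisation $HS(F)$ then acts component-wise as the untwisted Hilsum--Skandalis map, and these local definitions patch because the transition functions of $E$ are locally constant and $HS(F)$ is linear. This yields a well-defined bounded operator $HS(F)\colon L^2(M;\Lambda^*\Iie T^*M\otimes E)\to L^2(X;\Lambda^*\Iie T^*X\otimes F^*E)$; the intertwining $HS(F)d^E=d^{F^*E}HS(F)$ and the inclusion $HS(F)\cD_{\cW_M}(d^E)\subseteq\cD_{\cW_X}(d^{F^*E})$ are, component-wise in each such trivialisation, exactly the corresponding untwisted statements of Theorem~\ref{thm:UntwistedHS} (using Lemma~\ref{lem:RegPullBack} near the strata). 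Hence $[HS(F)]$ is defined on twisted de Rham cohomology. The $\sG(r)$ case is verbatim the same, with the orthogonal representation of $\Gamma$ on $\bbR^N$ replaced by the left regular action of $\Gamma$ on the rank-one free module $C^*_r\Gamma$: over $\cU_q$ one has $\sG(r)\rest{\cU_q}\cong\cU_q\times C^*_r\Gamma$ with trivial flat connection, the pulled-back bundle over $\cU_q\subseteq\hat X$ is a copy of $\sG(F\circ r)$, and Theorem~\ref{thm:HigherSA} guarantees the twisted domain is well-behaved.

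Next come homotopy invariance and the isomorphism statement. A smooth family $F_t$ of codimension-preserving $b$-maps gives canonically isomorphic pulled-back flat bundles $F_t^*E$, so running the argument of \cite[Lemma 9.1]{ALMP11} --- which uses only smoothness and codimension-preservation --- in the local trivialisations produces a chain homotopy between $HS(F_0)$ and $HS(F_1)$ with coefficients, whence the induced map on twisted cohomology is unchanged. Combined with the composition formula $[HS(G\circ F)]=[HS(F)]\circ[HS(G)]$ of \cite[Lemma 9.2]{ALMP11} (still valid with coefficients, again by localisation) and the functoriality $(G\circ F)^\sharp=F^\sharp\circ G^\sharp$ of the mezzoperversity pull-back from the preceding theorem, the isomorphism assertion follows just as there: for a stratified homotopy equivalence $F$ with homotopy inverse $F'$, one has $(F\circ F')^\sharp\cW_M=\cW_M$ and $(F'\circ F)^\sharp\cW_X=\cW_X$, so $[HS(F)]$ and $[HS(F')]$ are mutually inverse isomorphisms. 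The same steps apply to $\ker d^{\sG(r)}/\Im d^{\sG(r)}$.

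I expect the main obstacle to be the first step. Because $HS(F)$ is a genuinely non-local operator --- it moves points along $\bar\exp_M$ and averages over the fibres of $\pi_X$ --- one must verify carefully that its twisted extension agrees, in each distinguished neighbourhood, with the component-wise untwisted map. This requires the flat bundle to be simultaneously trivialised, with trivial connection, over a whole distinguished neighbourhood and its image; that this is available is exactly the observation (recalled in \S\ref{sec:TwistedDeRham}) that $F$ is stratum-preserving and cones induce null-homotopies, so the induced covering is trivial there. Once this local picture is secured, every remaining assertion reduces to a component-wise application of results already proved.
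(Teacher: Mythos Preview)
Your proposal is correct and follows essentially the same approach as the paper: the paper does not give a separate proof of this theorem but instead observes, in the paragraphs immediately preceding the statement, that since $\cD_{\cW_M}(d^E)$ is locally $N$ copies of $\cD_{\cW_M}(d^M)$, Theorem~\ref{thm:UntwistedHS} applies component-wise, and the $\sG(r)$ case follows by the same considerations. Your write-up is considerably more detailed than the paper's (in particular your discussion of why the non-local operator $HS(F)$ really does act component-wise in a local trivialisation), but the underlying idea is identical.
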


%%%%%%%%%%%%%%%%%%%%%%%%
\section{The analytic signature of a Cheeger space} \label{sec:Sign}
%%%%%%%%%%%%%%%%%%%%%%%%

Let $(\hat X,g_X)$ be an oriented pseudomanifold with an $\iie$ metric, and define the intersection pairing of $L^2$-differential forms by
\begin{equation*}
\xymatrix @R=1pt
{ L^2(X; \Lambda^*\Iie T^*X) \times L^2(X; \Lambda^*\Iie T^*X) \ar[r]^-{Q} & \bbR \\
(u,v) \ar@{|->}[r] & \int_X u \wedge v }.
\end{equation*}
Every mezzoperversity $\cW$ on $X$ has an associated dual mezzoperversity $\sD\cW$ \cite[\S 6]{ALMP13.1}.
The simplest characterization of $\sD\cW$ is to point out that 
if $g_X$ is adapted to $\cW$ then the Hodge star induces a bijection
\begin{equation*}
	*: \cD_{\cW}(d) \lra \cD_{\sD\cW}(d).
\end{equation*}

The restriction of $Q,$
\begin{equation*}
	Q: \cD_{\cW}(d) \times \cD_{\sD\cW}(d) \lra \bbR,
\end{equation*}
is non-degenerate and descends to a non-degenerate pairing
\begin{equation*}
	Q: \tH_{\cW}^*(\hat X) \times \tH_{\sD\cW}^*(\hat X) \lra \bbR.
\end{equation*}
We can think of this as a refinement of Goresky-MacPherson's generalized Poincar\'e duality.

\begin{definition}\label{def:mezzo}
A mezzoperversity $\cW$ is {\bf self-dual} if $\cW = \sD\cW.$
A pseudomanifold $\hat X$ on which there is a self-dual mezzoperversity is a {\bf Cheeger space.}
\end{definition}

Not all pseudomanifolds $\hat X$ are Cheeger spaces. For example a space with isolated conic singularities is a Cheeger space if and only if the signatures of the links vanish. The reductive Borel-Serre compactification of a Hilbert modular surface is a more intricate example of a Cheeger space, see \cite{Banagl-Kulkarni, ABLMP}. \\

Let $\hat X$ be a Cheeger space and $\cW$  a self-dual mezzoperversity with an adapted $\iie$ metric $g.$
In this case the intersection pairing is a non-degenerate pairing 
\begin{equation}\label{eq:NonDegQ}
	Q: \tH_{\cW}^*(\hat X) \times \tH_{\cW}^*(\hat X) \lra \bbR\,.
\end{equation}

Moreover, as already pointed out, the  Hodge star is now a linear map
\begin{equation}\label{eq:self}
	*: \cD_{\cW}(\eth_{\dR}) \lra \cD_{\cW}(\eth_{\dR}).
\end{equation}

This allows for the definition of the signature operator, as we shall now explain.
If $\hat X$ is even-dimensional, the Hodge star induces a natural involution on the compactly supported differential forms on $X,$
\begin{equation}\label{defcI}
	\cI: \CIc(X; \Lambda^*T^*X) \lra \CIc(X;\Lambda^*T^*X), \quad \cI^2 =\Id
\end{equation}
that extends to $L^2$ $\iie$ forms
\begin{equation*}
	\cI: L^2(X; \Lambda^*(\Iie T^*X)) \lra \CIc(X;\Lambda^*(\Iie T^*X)), \quad \cI^2 =\Id.
\end{equation*}

The $+1,$ $-1$ eigenspaces define the self-dual and anti-self-dual forms,
\begin{equation*}
	L^2(X; \Lambda_+^*(\Iie T^*X)), \quad
	\Mand
	L^2(X; \Lambda_-^*(\Iie T^*X)).
\end{equation*}
The de Rham operator, extended to complexified forms, anticommutes with $\cI$ and this defines, as usual,
the signature operator 
\begin{equation*}
	\eth_{\sign}   
	= \begin{pmatrix} 0 & \eth_{\sign}^- \\ \eth_{\sign}^+ & 0 \end{pmatrix}.
\end{equation*}
If $\hat X$ is odd-dimensional, the signature operator is
\begin{equation*}
	\eth_{\sign} = -i(d\cI + \cI d)= -i \cI (d-\delta)= -i (d-\delta) \cI
\end{equation*}

To summarize, the signature operator is an unbounded operator with domain equal to complex-valued compactly supported
differential forms, suitably graded in the even dimensional case.

If $\cW$ is a {\it self-dual} mezzoperversity and $X$ is even-dimensional, then by \eqref{eq:self},  we can define
\begin{equation*}
	\cD_{\mathcal{W}}(\eth_{\sign}^{\pm}) = \cD_{\mathcal{W}}(\eth_{\dR}) \cap L^2(X; \Lambda_\pm^*(\Iie T^*X)).
\end{equation*}

We then obtain \cite[Theorem 7.6]{ALMP13.1}:
\begin{theorem}
Let $(\hat X, g)$ be an even dimensional  stratified space endowed with a suitably scaled $\iie$ metric $g$ and Cheeger ideal boundary conditions corresponding to a self-dual mezzoperversity $\cW.$
The signature operator 
\begin{equation*}
	\eth_{\sign}^+: \cD_{\mathcal{W}}(\eth_{\sign}^+) \subseteq L^2(X; \Lambda_+^*(\Iie T^*X)) \lra L^2(X; \Lambda_-^*(\Iie T^*X))
\end{equation*}
is closed and Fredholm, with adjoint  $(\eth_{\sign}^-, \cD_{\mathcal{W}}(\eth_{\sign}^-))$. Its Fredholm index is equal to the signature of the generalized 
Poincar\'e duality quadratic form $Q$ from \eqref{eq:NonDegQ}.
\end{theorem}

If $\hat X$ is odd dimensional we define 
\begin{equation*}
	\cD_{\mathcal{W}}(\eth_{\sign}) = \cD_{\mathcal{W}}(\eth_{\dR})
\end{equation*}
Using \eqref{eq:intersec} and \eqref{eq:self} we see immediately that $(\eth_{\sign}, \cD_{\mathcal{W}}(\eth_{\sign}))$ is well defined,
self-adjoint and Fredholm.

In this section we will show that this signature depends only on the Cheeger space $\hat X$ itself.
We will also prove analogous statements for the twisted `higher' signature operator.

{\em From now on we assume that all spaces are oriented and all representations are over the field of complex numbers.}

%%%%%%%%%%%%%%%%%%%%%%%%
\subsection{K-homology classes} $ $\\
%%%%%%%%%%%%%%%%%%%%%%%%

We start by using the results from \cite{ALMP11} recalled above to see that given $(\hat X, g, \cW),$ a Cheeger space with a self-dual mezzoperversity and an adapted $\iie$-metric, the corresponding signature operator defines a $K$-homology class in $\tK_*(\hat X) = \tK\tK_*(\cC(\hat X), \bbC).$ This extends the construction from \cite{ALMP11} and   \cite{Moscovici-Wu} for Witt spaces.

Recall \cite{Baaj-Julg, Blackadar} that an even unbounded Fredholm module for $\cC(\hat X)$ is a pair $(H,D)$ satisfying
\begin{enumerate}
\item $D$ is a self-adjoint unbounded operator acting on the Hilbert space $H,$ which also has an action (unitary $*$-representation) of $\cC(\hat X);$
\item $\cC(\hat X)$ has a dense $*$-subalgebra $\cA$ whose action preserves the domain of $D$ and satisfies
\begin{equation*}
	a \in \cA \implies [D,a] \text{ extends to a bounded operator on } H;
\end{equation*}
\item $\Id+D^2$ is invertible with compact inverse;
\item There is a self-adjoint involution $\tau$ on $H$ commuting with $\cC(\hat X)$ and anti-commuting with $D.$
\end{enumerate}
An odd Fredholm module for $\cC(\hat X)$ is a pair $(H,D)$ satisfying all but the last of these conditions.

In view of this definition we point out that the class of functions $\CI(\hat X)$ from Definition \ref{def:SmoothFun} form a dense subset of $\cC(\hat X)$ (e.g., by the Stone-Weierstrass theorem). We also point out that, from \cite[Theorem 6.6]{ALMP13.1}, if $D = (\eth_{\sign}, \cD_{\cW}(\eth_{\sign})),$ then 
\begin{equation}\label{eq:trace}
	 (\Id + D^2)^{-n-1} \text{ is trace-class on } L^2(X; \Lambda^*_{\bbC} \Iie T^*X).
\end{equation}

\begin{theorem}\label{theo:k-homology}
Let $(\hat X, g, \cW)$ be a Cheeger space with a self-dual mezzoperversity $\cW$ and adapted $\iie$ metric $g.$
The signature operator $(\eth_{\sign}, \cD_{\cW}(\eth_{\sign}))$
defines an unbounded Fredholm module
for $\cC(\widehat{X})$ and thus a class
$[\eth_{\sign,\cW}]\in KK_* (C(\widehat{X}),\bbC)$, $* \equiv\dim X \,{\rm mod} \,2$.
The class  $[\eth_{\sign,\cW}]$ does not change under a continuous homotopy of metrics and self-dual mezzoperversities.
\end{theorem}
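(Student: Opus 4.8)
The plan is to prove the two assertions in turn: first that the triple $(L^2(X;\Lambda^*_{\bbC}\Iie T^*X),\eth_{\sign},\tau)$ verifies the axioms of an (even or odd, according to the parity of $\dim X$) unbounded Fredholm module for $\cC(\hat X)$, and then that a continuous deformation of $(g,\cW)$ through self-dual mezzoperversities produces an operator homotopy of the associated bounded modules, hence leaves the $KK$-class unchanged.

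For the first part I would take the dense $*$-subalgebra to be $\cA=\CI(\hat X)$, which is dense in $\cC(\hat X)$ by Stone--Weierstrass as noted before the statement. Self-adjointness of $D:=(\eth_{\sign},\cD_{\cW}(\eth_{\sign}))$ and invertibility of $\Id+D^2$ with compact inverse are exactly \cite[Theorem 6.6]{ALMP13.1}, recalled above, and \eqref{eq:trace} shows in addition that $(\Id+D^2)^{-1}$ lies in a Schatten ideal, so the module is finitely summable. For $f\in\CI(\hat X)$ one checks that the domain is preserved: $\cD_{\cW}(\eth_{\dR})$ is a local domain in the sense of \eqref{eq:DmaxIsLocal}, and since $\tau$ acts by a pointwise bundle endomorphism it commutes with multiplication operators and preserves $\cD_{\cW}(\eth_{\sign})$, so that domain is local as well and $f\cdot\cD_{\cW}(\eth_{\sign})\subseteq\cD_{\cW}(\eth_{\sign})$; the commutator $[\eth_{\dR},f]$ is, as observed just before \eqref{eq:DmaxIsLocal}, a combination of exterior multiplication by $df$ and interior multiplication by $\nabla f$, hence bounded on $L^2(X;\Lambda^*\Iie T^*X)$, and therefore so is $[\eth_{\sign},f]$ in either parity. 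When $\dim X$ is even the grading is the involution $\tau$ of \eqref{eq:tau}: it is self-adjoint with $\tau^2=\Id$, anticommutes with $\eth_{\dR}=\eth_{\sign}$, preserves $\cD_{\cW}(\eth_{\sign})$ because the Hodge star does, and commutes with the $\cC(\hat X)$-action because the Hodge star is $\CI(X)$-linear in each form degree and $\tau$ alters it only by a degree-dependent scalar; when $\dim X$ is odd this last item is simply dropped. This produces $[\eth_{\sign,\cW}]\in KK_*(\cC(\hat X),\bbC)$.

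For the homotopy invariance, fix a continuous path $(g_s,\cW_s)_{s\in[0,1]}$ with each $\cW_s$ self-dual and adapted to $g_s$. By compactness of $[0,1]$ and of $\wt X$ the metrics $g_s$ are uniformly mutually quasi-isometric on $\Iie TX$, so the spaces $L^2(X;\Lambda^*_{\bbC}\Iie T^*X;g_s)$ all coincide as a single topological vector space $H$ (as in the discussion preceding Theorem \ref{thm:UntwistedHS}); the representation of $\cC(\hat X)$ by multiplication does not depend on $s$, while the inner product $\langle\cdot,\cdot\rangle_s$, the involution $\tau_s$, the operator $D_s:=\eth_{\sign,\cW_s}$ and its domain move with $s$. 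Let $T_s\in\cB(H)$ be the positive invertible density comparing $\langle\cdot,\cdot\rangle_s$ with $\langle\cdot,\cdot\rangle_0$; it depends continuously on $s$, and conjugation by $T_s^{1/2}$ transports the whole family to self-adjoint operators on the fixed Hilbert space $(H,\langle\cdot,\cdot\rangle_0)$, commuting with the fixed representation, and a further norm-continuous unitary conjugation lets one take $\tau_s\equiv\tau_0$. The plan is then to show that $s\mapsto(D_s\pm i)^{-1}\in\cB(H)$ is norm-continuous: granting this, the bounded transforms $F_s:=D_s(\Id+D_s^2)^{-1/2}$ depend norm-continuously on $s$, $F_s^2-\Id=-(\Id+D_s^2)^{-1}$ is a norm-continuous path of compacts, and $[F_s,a]$ is compact and norm-continuous for every $a\in\cC(\hat X)$ (first for $a\in\CI(\hat X)$, then by density), so $(H,F_s,\tau_0)$ is an operator homotopy of (graded, in the even case) Fredholm modules and $[\eth_{\sign,\cW_0}]=[\eth_{\sign,\cW_1}]$ in $KK_*(\cC(\hat X),\bbC)$.

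What remains, and where the real work lies, is the norm-resolvent continuity of $s\mapsto D_s$. The dependence on the metric alone is the classical mechanism used in the Witt case in \cite{ALMP11,Moscovici-Wu}: the coefficients of $\eth_{\sign}(g_s)$ on $\Lambda^*\Iie T^*X$ vary continuously with $g_s$, and one bounds $(D_s-D_0)(D_0\pm i)^{-1}$ on a common core. The genuinely new ingredient is the dependence of the domain on the mezzoperversity. By the inductive description recalled in \S\ref{sec:Recall}, $\cD_{\cW_s}(\eth_{\sign})$ is carved out, stratum by stratum, by requiring the leading coefficients $\alpha_i(u)$ and $\beta_i(u)$ in the distributional expansion of $u$ at $H^i$ to lie respectively in $W^i_s$ and in $(W^i_s)^{\perp}$, where $W^i_s$ is a continuously varying parallel subbundle of the vertical Hodge cohomology bundle $\cH^{f^i/2}_{(W^1_s,\dots,W^{i-1}_s)}(H^i/Y^i)$ --- and these bundles themselves move with $s$. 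One must show that a continuous path of such adapted self-dual mezzoperversities is realized by a norm-continuous path of bounded invertible maps $U_s:\cD_{\cW_0}(\eth_{\sign})\to\cD_{\cW_s}(\eth_{\sign})$ intertwining graph norms, using the structural results of \cite{ALMP13.1} on these domains and on the boundary restriction maps $u\mapsto(\alpha_i(u),\beta_i(u))$, together with the fact that the ``suitably scaled'' condition (no small eigenvalues of the induced link operators) is open, hence holds uniformly along the compact path. Combining the metric and the mezzoperversity contributions then yields norm-resolvent continuity. I expect this step --- threading the homotopy parameter through the inductive construction of the Cheeger ideal boundary conditions and their $L^2$ asymptotics --- to be the main obstacle; the remaining ingredients are either standard or already available in \cite{ALMP13.1,ALMP11}.
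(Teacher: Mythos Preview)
Your argument for the first assertion is essentially identical to the paper's: same Hilbert space, same dense subalgebra $\CI(\hat X)$, same verification that multiplication preserves the domain (via locality) and that $[\eth_{\sign},f]$ is Clifford multiplication by $df$, hence bounded; compactness of the resolvent is drawn from \cite[Theorem~6.6]{ALMP13.1} in both.

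For the homotopy invariance, your route and the paper's differ in presentation but not in substance. The paper does not carry out the resolvent-continuity argument you outline; it simply asserts that the family $(H_t,\eth^t_{\sign,\cW_t})$ is an \emph{unbounded} operatorial homotopy in the sense of Hilsum's work on Lipschitz manifolds \cite{Hilsum-LNM}, and invokes the homotopy invariance of $KK$-theory, explicitly omitting the details as a repetition of Hilsum. Your approach of transporting to a fixed Hilbert space, establishing norm-resolvent continuity, and passing to the bounded transform is precisely the mechanism that underlies Hilsum's criterion, so you are unpacking what the paper black-boxes. In particular, you correctly single out the $s$-dependence of the domain through the moving mezzoperversity as the genuine analytic content; the paper does not isolate this point at all, subsuming it under the reference to \cite{Hilsum-LNM}. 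Your plan is sound and, if anything, more informative than the paper's proof --- but be aware that the paper does not supply the details of the domain-continuity step either, so you should not expect to find them there.
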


\begin{proof}
In the notation above, we take $H=L^2(X;\Iie\Lambda^*X)$, endowed with the natural representation of $C(\widehat{X})$
by multiplication operators, $\cA = \CI(\hat X),$  $D = (\eth_{\sign}, \cD_{\cW}(\eth_{\sign})),$ and $\tau = \cI$ from \eqref{defcI}.
All the conditions defining an unbounded Kasparov module are easily proved using the results
of the previous section. \\
First, note that multiplication by any element  $f$ of $\cA$ preserves $\cD_{\cW}(\eth_{\sign}).$
Next, note that $[\eth_{\sign},f]$ is given  by Clifford multiplication by $df$ which exists  everywhere and is an element in $L^\infty (\widehat{X})$; in particular $[\eth_{\sign},f]$ 
extends to a bounded operator on $H$; finally we know that $(1+D^2)^{-1}$ is a compact operator (indeed, the compact inclusion of $\cD_{\cW}$ into $L^2,$ together with self-adjointness proves that both $(i+D)^{-1}$ and $(-i+D)^{-1}$ are compact). Moreover we recall equation \eqref{eq:trace}. 
Thus there is a well defined class in $KK_* (C(\hat{X}),\bbC)$ which we denote simply by $[\eth_{\sign,\cW}].$

Given a homotopy $(g_t, \cW_t)$ of self-dual mezzoperversities and adapted $\iie$ metrics,
let $\eth^t_{\sign,\cW_t}$ be the corresponding signature operators, with domains in $H=H_t.$
Proceeding as in the work of Hilsum on Lipschitz manifolds \cite{Hilsum-LNM} one can prove that the 1-parameter family
$(H_t, \eth^t_{\sign,\cW_t})$ defines an {\it unbounded} operatorial  homotopy; using the homotopy invariance of $\tK\tK$-theory one obtains 
$$[\eth^0_{\sign, \cW_1}]=[\eth^1_{\sign,\cW_2}] \;\,\text{ in }\;\;  \tK\tK_* (C(\widehat{X}),\bbC)\,.$$
We omit the details since they are a repetition of the ones given in  \cite{Hilsum-LNM}. 
\end{proof}

We can carry out the same construction for the signature operator twisted by a flat $\bbC$-vector bundle as we have considered above.
However the machinery of $\tK$-homology makes this unnecessary; indeed the class $[\eth_{\sign, \cW}]$ determines the index of the signature operator with coefficients in $E.$

Coefficients in a bundle of $C^*$-algebra modules does require a different, though formally very similar, construction.
Thus let $(\hat X, g, \cW)$ be a Cheeger space with a self-dual mezzoperversity and an adapted $\iie$ metric, and let $r:\hat X \lra B\Gamma$ be a classifying map for a $\Gamma$-covering with $\Gamma$ a finitely generated discrete group. Let $\sG(r)$ be the flat bundle associated to $r$ with fiber $C^*_r\Gamma.$

First note that exactly as above we have a twisted signature operator, $(\eth_{\sign}^{\sG(r)}, \cD_{\cW}(\eth_{\sign}^{\sG(r)})),$ which by Theorem \ref{thm:HigherSA} is closed, Fredholm, and self-adjoint and has domain is $C^*_r\Gamma$-compactly included in $L^2(X; \Lambda^*\Iie T^*X \otimes \sG(r)).$ As in \cite[Proposition 6.4]{ALMP11} these facts can be combined with arguments of Skandalis, published in \cite{Rosenberg-Weinberger}, to prove the following theorem:

\begin{theorem}
Let $(\hat X, g, \cW),$ $r:\hat X \lra B\Gamma,$ and $\sG(r)$ be as above.
\begin{itemize} 
\item If $D$ is the operator $(\eth_{\sign}^{\sG(r)}, \cD_{\cW}(\eth_{\sign}^{\sG(r)})),$ then $\Id + D^2$ is surjective (i.e., $D$ is a `regular' operator).  
\item The operator $D$ and the $C^*_r\Gamma$-Hilbert module $L^2(X; \Lambda^*\Iie T^*X \otimes \sG(r))$ define an unbounded Kasparov
$(\bbC, C^*_r\Gamma)$-bimodule and thus a class in $\tK_*(C^*_r\Gamma).$ We call this the index class associated to $D$ and denote it by
\begin{equation*}
	\Ind(\eth_{\sign,\cW}^{\sG(r)}) \in KK_*( \bbC, C^*_r\Gamma ) \simeq K_*(C^*_r\Gamma).
\end{equation*}
\item
If 
\begin{equation*}
	[[\eth_{\sign, \cW}]] \in \tK\tK_*(C(\hat X)\otimes C^*_r\Gamma, C^*_r\Gamma)
\end{equation*}
is the class obtained by tensoring the class $[\eth_{\sign, \cW}] \in \tK\tK_*(\cC(\hat X), \bbC)$ with $\Id_{C^*_r\Gamma},$ then
\begin{equation*}
	\Ind(\eth_{\sign,\cW}^{\sG(r)}) = [\sG(r)] \otimes [[\eth_{\sign,\cW}]].
\end{equation*}
That is, the index class of the higher signature operator is the Kasparov product of $[[\eth_{\sign, \cW}]]$ with $[\sG(r)] \in \tK\tK_0(\bbC, \cC(\hat X) \otimes C^*_r\Gamma).$
\end{itemize}
\end{theorem}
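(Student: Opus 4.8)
The plan is to follow, essentially verbatim, the strategy used for Witt spaces in \cite[Proposition 6.4]{ALMP11} together with the arguments of Skandalis reproduced in \cite{Rosenberg-Weinberger}, the point being that those arguments use only the abstract properties recorded in Theorem \ref{thm:HigherSA} and in \cite[Theorem 6.6]{ALMP13.1}, and never the Witt hypothesis. Throughout, write $\cE = L^2(X; \Lambda^*\Iie T^*X \otimes \sG(r))$ for the ambient $C^*_r\Gamma$-Hilbert module and $D = (\eth_{\sign}^{\sG(r)}, \cD_{\cW}(\eth_{\sign}^{\sG(r)}))$.

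For the first bullet I would prove regularity --- surjectivity of $\Id + D^2$ --- by a $C^*_r\Gamma$-parametrix construction. Cover $\hat X$ by finitely many distinguished neighborhoods $\cU_\alpha$, with a subordinate partition of unity $\{\chi_\alpha\} \subseteq \CI(\hat X)$ and square roots $\{\psi_\alpha\}$, $\sum \psi_\alpha^2 = 1$. Over each $\cU_\alpha$ the bundle $\sG(r)$ and its flat connection are trivial, so $\eth_{\sign}^{\sG(r)}\rest{\cU_\alpha} = \eth_{\sign}\rest{\cU_\alpha} \otimes \Id_{C^*_r\Gamma}$ and the domain $\cD_{\cW}$ is local; hence the genuine complex compact resolvent of $(\eth_{\sign}, \cD_{\cW}(\eth_{\sign}))$ provided by \cite[Theorem 6.6]{ALMP13.1}, amplified by $\Id_{C^*_r\Gamma}$, furnishes a local approximate inverse $Q_\alpha$. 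Patching, $Q = \sum_\alpha \psi_\alpha Q_\alpha \psi_\alpha$ is a bounded adjointable operator on $\cE$ with $(\Id + D^2)Q - \Id$ and $Q(\Id + D^2) - \Id$ in $\cK(\cE)$ --- here one uses precisely the $C^*_r\Gamma$-compact inclusion $\cD_{\cW}(\eth_{\sign}^{\sG(r)}) \hookrightarrow \cE$ of Theorem \ref{thm:HigherSA}. A Neumann-series correction then upgrades $Q$ to an honest inverse of $\Id + D^2$ lying in $\cK(\cE)$, which yields regularity and a $C^*_r\Gamma$-compact resolvent at once. Controlling the error terms of this patched parametrix on the Hilbert module is the main obstacle, and it is where the Hilsum--Skandalis estimates from \cite{ALMP11} do the work; one must also check that the off-diagonal commutator terms $[\psi_\alpha, D]Q_\alpha$ are compact, which again follows from locality of the domain.

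Granting this, the second bullet is immediate: the pair $(\cE, D)$, graded by the involution $\tau$ of \eqref{eq:tau} when $\dim X$ is even and ungraded when $\dim X$ is odd, satisfies the axioms of an unbounded Kasparov $(\bbC, C^*_r\Gamma)$-bimodule --- a self-adjoint regular operator with $C^*_r\Gamma$-compact resolvent on a countably generated Hilbert module --- and therefore defines a class $\Ind(\eth_{\sign,\cW}^{\sG(r)}) \in KK_*(\bbC, C^*_r\Gamma) \simeq \tK_*(C^*_r\Gamma)$. For the third bullet I would exhibit $(\cE, D)$ as an unbounded representative of the interior Kasparov product $[\sG(r)] \otimes_{\cC(\hat X) \otimes C^*_r\Gamma} [[\eth_{\sign,\cW}]]$: the class $[\sG(r)]$ is represented by the Mishchenko projection equipped with the zero operator, while $[[\eth_{\sign,\cW}]]$ is represented by $L^2(X; \Lambda^*_{\bbC} \Iie T^*X) \otimes C^*_r\Gamma$ with operator $\eth_{\sign} \otimes \Id_{C^*_r\Gamma}$. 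Since $\sG(r)$ is locally trivial, $D$ is locally $\eth_{\sign} \otimes \Id$, so the connection condition in the standard criterion for recognizing unbounded Kasparov products is verified on distinguished neighborhoods exactly as in \cite[Proposition 6.4]{ALMP11}; the positivity condition is automatic. The product is therefore represented by the signature operator twisted by the Mishchenko bundle, which is by definition $D$, giving $\Ind(\eth_{\sign,\cW}^{\sG(r)}) = [\sG(r)] \otimes [[\eth_{\sign,\cW}]]$. As this is formally identical to the Witt case, I would in the end simply cite \cite[Proposition 6.4]{ALMP11} and \cite{Rosenberg-Weinberger}, noting that their proofs use only the properties above.
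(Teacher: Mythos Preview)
Your proposal is correct and follows exactly the approach the paper takes: the paper offers no formal proof at all, merely stating before the theorem that ``As in \cite[Proposition 6.4]{ALMP11} these facts can be combined with arguments of Skandalis, published in \cite{Rosenberg-Weinberger}, to prove the following theorem.'' Your sketch is thus a faithful (and considerably more detailed) elaboration of precisely the citation the authors give, and your closing remark that one ``would in the end simply cite \cite[Proposition 6.4]{ALMP11} and \cite{Rosenberg-Weinberger}'' is literally what the paper does.
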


%\begin{proof}
%As in \cite{ALMP11}, this is shown following arguments of Skandalis published in \cite{Rosenberg-Weinberger}.
%We very briefly sketch the proof and refer the reader to \cite[Proposition 6.4]{ALMP11} for more details.
%
%The thrust of the proof is to compare $(\eth_{\sign}^{\sG(r)}, \cD_{\cW}(\eth_{\sign}^{\sG(r)})),$ with the twist of $(\eth_{\sign}, \cD_{\cW}(\eth_{\sign}))$
%We start with the class $[\eth_{\sign, \cW}]$ in $\tK\tK_*(\cC(\hat X), \bbC)$ and tensor it with $\Id_{C^*_r\Gamma}$ to obtain an unbounded Kasparov 
%$(\cC(\hat X) \otimes C^*_r\Gamma, C^*_r\Gamma)$-bimodule which we denote $(\cE, D),$ and a corresponding $\tK\tK$-class
%%
%\begin{equation*}
%	[[\eth_{\sign, \cW}]] \in \tK\tK_*(C(\hat X)\otimes C^*_r\Gamma, C^*_r\Gamma).
%\end{equation*}
%%
%Now the key fact is that $\eth_{\sign}^{\sG(r)}$ defines a $D$-connection in the sense of Skandalis.
%\end{proof}

\begin{corollary}\label{cor:assembly}
Let $\beta:K_* (B\Gamma)\to K_* (C^*_r\Gamma)$ be the assembly map; let $r_* [\eth_{\sign,\cW}]\in K_* (B\Gamma)$
the push-forward of the signature K-homology class. Then 
\begin{equation}\label{assembly}
\beta(r_* [\eth_{\sign,\cW}])=\Ind (\eth_{\sign,\cW}^{\sG(r)}) \text{ in }  K_* (C^*_r\Gamma).
\end{equation}
\end{corollary}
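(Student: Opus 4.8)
The plan is to read off the corollary from the theorem immediately preceding it by unwinding the definition of the assembly map. Recall that, since $\hat X$ is compact, the classifying map $r$ factors up to homotopy through a finite subcomplex of $B\Gamma$, and on the $K$-homology of such a subcomplex the assembly map is realized as the Kasparov product
\[
	\beta(\eta) \;=\; [\cL_\Gamma] \otimes_{\cC(B\Gamma)} \eta, \qquad \eta \in \tK\tK_*(\cC(B\Gamma),\bbC),
\]
with $[\cL_\Gamma] \in \tK\tK_0(\bbC, \cC(B\Gamma)\otimes C^*_r\Gamma)$ the class of the Mishchenko bundle $\cL_\Gamma = E\Gamma \times_\Gamma C^*_r\Gamma$.

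First I would observe that the bundle $\sG(r)$ of \eqref{sGDef} is nothing but the pullback $r^*\cL_\Gamma$ (indeed $\hat X'=r^*E\Gamma$ as a principal $\Gamma$-bundle), so that, writing $[r^*]\in\tK\tK_0(\cC(B\Gamma),\cC(\hat X))$ for the class of the $*$-homomorphism $r^*:\cC(B\Gamma)\to\cC(\hat X)$,
\[
	[\sG(r)] \;=\; [\cL_\Gamma]\otimes_{\cC(B\Gamma)\otimes C^*_r\Gamma}\bigl([r^*]\otimes\Id_{C^*_r\Gamma}\bigr) \quad\text{in}\quad \tK\tK_0(\bbC,\cC(\hat X)\otimes C^*_r\Gamma).
\]
Combining this with the functoriality of the $K$-homology pushforward, $r_*[\eth_{\sign,\cW}] = [r^*]\otimes_{\cC(\hat X)}[\eth_{\sign,\cW}]$, and with the associativity of the Kasparov product yields
\[
	\beta\bigl(r_*[\eth_{\sign,\cW}]\bigr) \;=\; [\cL_\Gamma]\otimes_{\cC(B\Gamma)}[r^*]\otimes_{\cC(\hat X)}[\eth_{\sign,\cW}] \;=\; [\sG(r)]\otimes_{\cC(\hat X)\otimes C^*_r\Gamma}[[\eth_{\sign,\cW}]].
\]
By the theorem immediately preceding this corollary the right-hand side equals $\Ind(\eth_{\sign,\cW}^{\sG(r)})$, which is \eqref{assembly}.

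All of the above is formal once two points are secured: the reduction to a finite subcomplex of $B\Gamma$ that legitimizes the Kasparov-product description of $\beta$ (standard, and carried out in the Witt case in \cite{ALMP11}), and the identification of the twisted signature operator with domain $\cD_{\cW}(\eth_{\sign}^{\sG(r)})$ from \S\ref{sec:TwistedDeRham} as a genuine representative of the internal product $[\sG(r)]\otimes[[\eth_{\sign,\cW}]]$. The latter is precisely the assertion of the previous theorem, and it is that analytic identification — not anything in the present corollary — where the work lies; granting it, the corollary drops out.
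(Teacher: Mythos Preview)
Your proof is correct and follows the same approach as the paper: both deduce the corollary from the preceding theorem by appealing to the Kasparov-product description of the assembly map. The paper's proof is a single sentence (``this follows immediately from the very definition of the assembly map''), whereas you have spelled out the unwinding --- the identification $\sG(r)=r^*\cL_\Gamma$, the functoriality of $r_*$, and the associativity step --- explicitly.
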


\begin{proof}
Since $\Ind (\eth_{\sign,\cW}^{\sG(r)})= [\widetilde{C^*_r}\Gamma]\otimes [[\eth_{\sign,\cW}]]$, this follows immediately
from the very definition of the assembly map, see  \cite{Kasparov-inventiones,Kasparov-contemporary}.
\end{proof}

We point out that so far all of our constructions depend on $(g, \cW),$ though they are unchanged under  a homotopy of this data.

%%%%%%%%%%%%%%%%%%%%%%%%
\subsection{Stratified homotopy invariance of the analytic signature} \label{sec:StratHom} $ $\\
%%%%%%%%%%%%%%%%%%%%%%%%

Let $(\hat M, g_M, \cW_M)$ be an oriented Cheeger space with a self-dual mezzoperversity and an adapted $\iie$-metric.
Let $\hat X$ be an oriented Cheeger space and $F: \hat X \lra \hat M$ an orientation-preserving stratified homotopy equivalence,
$\cW_X = F^{\sharp}(\cW_M)$ and $g_X$ an adapted $\iie$-metric.
We have already shown that the de Rham cohomology groups are stratified homotopy invariant, so that
\begin{equation*}
	[HS(F)]: \tH^*_{\cW_M}(\hat M) \xlra{\cong} \tH^*_{\cW_X}(\hat X).
\end{equation*}
Each of these cohomology spaces has a quadratic form \eqref{eq:NonDegQ} and it is {\em a priori} not clear what if any relation the two quadratic forms will have.
This is, however, exactly the situation for which the Hilsum-Skandalis replacement was formulated. We can use \cite[Proposition 9.3]{ALMP11} (cf. \cite[Lemma 2.1]{Hilsum-Skandalis}) to see that the signature of $\tH^*_{\cW_X}(\hat X)$ is equal to the signature of $\tH^*_{\cW_M}(\hat M)$ with their respective quadratic forms.

We also have the corresponding result for the signature operator twisted by a flat bundle of projective $C^*$-modules. 
\begin{theorem}\label{thm:StratHomInvSign}
Let $(\hat M, g_M, \cW_M)$ be an oriented Cheeger space with a self-dual mezzoperversity and an adapted $\iie$-metric.
Let $\hat X$ be an oriented Cheeger space and $F: \hat X \lra \hat M$ an orientation-preserving stratified homotopy equivalence,
$\cW_X = F^{\sharp}(\cW_X)$ and $g_X$ an adapted $\iie$-metric.
Let $\Gamma$ be a countable, finitely generated, finitely presented group and let $r:\hat M \lra B\Gamma$ be a classifying map.
We have an equality
\begin{equation*}
	\Ind(\eth_{\sign,\cW_M}^{\sG(r)}) = \Ind(\eth_{\sign,\cW_X}^{\sG(F\circ r)}) \Min K_*(C^*_r\Gamma).
\end{equation*}
\end{theorem}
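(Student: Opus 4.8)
The plan is to reduce the statement to the homotopy invariance of the signature class of a Hilbert--Poincar\'e complex of Hilbert $C^*_r\Gamma$-modules, and to extract the required homotopy equivalence of such complexes from the Hilsum--Skandalis replacement $HS(F)$ already analysed in Theorem \ref{thm:TwistedCohoIso}. Recall from Theorem \ref{thm:HigherSA} and the discussion preceding Corollary \ref{cor:assembly} that $\cD_{\cW_M}(d^{\sG(r)})^{[*]}$ is a cochain complex of Hilbert $C^*_r\Gamma$-modules with $C^*_r\Gamma$-compact inclusions, that the intersection form $Q$ (extended $C^*_r\Gamma$-sesquilinearly) is non-degenerate on it, and that the resulting structure is a Hilbert--Poincar\'e complex whose associated signature operator is precisely $(\eth_{\sign,\cW_M}^{\sG(r)}, \cD_{\cW_M}(\eth_{\sign}^{\sG(r)}))$; its signature class in $K_*(C^*_r\Gamma)$ is therefore $\Ind(\eth_{\sign,\cW_M}^{\sG(r)})$. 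The analogous description holds for $\hat X$ with coefficients in $\sG(F\circ r)$ once we know that $\cW_X = F^{\sharp}\cW_M$ is again self-dual.

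First I would check that $F^{\sharp}$ commutes with the duality operation $\sD$ on mezzoperversities, so that $\cW_X$ is self-dual and the left-hand side of the claimed identity is defined: this is a local statement at each stratum and follows from the compatibility of $HS(\wt F_q)$ with the Hodge star up to homotopy, together with the inductive description of $\sD$ in \cite[\S 6]{ALMP13.1}. Next, by Theorem \ref{thm:TwistedCohoIso} the map $HS(F)$ is a bounded, $C^*_r\Gamma$-linear cochain map $\cD_{\cW_M}(d^{\sG(r)})^{[*]} \lra \cD_{\cW_X}(d^{\sG(F\circ r)})^{[*]}$ inducing an isomorphism on cohomology. To promote this to a homotopy equivalence of Hilbert--Poincar\'e complexes over $C^*_r\Gamma$ I would argue exactly as in the untwisted case treated just above: by \cite[Proposition 9.3]{ALMP11} (cf. \cite[Lemma 2.1]{Hilsum-Skandalis}), $HS(F)$ is compatible with the intersection forms up to a bounded chain homotopy,
\[
	Q_X\big(HS(F)\,u,\,HS(F)\,v\big) \;\sim\; Q_M(u,v),
\]
and all of the ingredients in its construction --- the Thom form of $\Ie \bbB M$, integration along the fibre, and the boundary analysis of Lemma \ref{lem:RegPullBack} --- are $C^*_r\Gamma$-linear, so they carry over verbatim to $\sG(r)$-coefficients; here one uses crucially that near each singular stratum $\sG(r)$ is, together with its connection, a trivial $C^*_r\Gamma$-bundle, so the domains $\cD_{\cW}(d^{\sG(r)})$ are locally just the untwisted domains $\cD_{\cW}(d)$ with values in $C^*_r\Gamma$.

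Finally I would invoke the abstract fact --- in the form of \cite[Lemma 2.1]{Hilsum-Skandalis} adapted to our Hilbert $C^*_r\Gamma$-module complexes, equivalently the Hilbert--Poincar\'e complex formalism of Higson--Roe --- that a bounded homotopy equivalence of Hilbert--Poincar\'e complexes of $C^*_r\Gamma$-modules induces an equality of signature index classes in $K_*(C^*_r\Gamma)$. Combined with the identifications of the first two paragraphs this gives
\[
	\Ind(\eth_{\sign,\cW_M}^{\sG(r)}) = \Ind(\eth_{\sign,\cW_X}^{\sG(F\circ r)}) \quad \text{in } K_*(C^*_r\Gamma).
\]
The step I expect to be the main obstacle is the construction of the bounded chain homotopy implementing $Q_X(HS(F)\,\cdot\,,HS(F)\,\cdot\,)\sim Q_M(\,\cdot\,,\,\cdot\,)$ and the verification that it respects the Cheeger ideal boundary conditions: one must control how the push-forward along the sphere bundle defining $HS(F)$ interacts with the distributional leading terms $\alpha,\beta$ of forms in $\cD_{\cW}$, so that the homotopy maps $\cD_{\cW_M}(d^{\sG(r)})$ into $\cD_{\cW_X}(d^{\sG(F\circ r)})$ and is bounded on $L^2(X;\Lambda^*\Iie T^*X\otimes \sG(F\circ r))$. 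This is the analytic heart of the argument --- essentially a refinement of Lemma \ref{lem:RegPullBack} together with the homotopy statement of \cite[Lemma 9.1]{ALMP11} --- while the passage to $C^*_r\Gamma$-coefficients contributes nothing beyond bookkeeping.
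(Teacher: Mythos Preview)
Your proposal is correct and follows essentially the same route as the paper: reduce to the Hilsum--Skandalis criterion \cite[Lemma 2.1]{Hilsum-Skandalis}, use Theorem \ref{thm:TwistedCohoIso} for the cochain-map and cohomology-isomorphism conditions, and invoke \cite[Proposition 9.3, Lemma 9.1]{ALMP11} for the chain homotopy $\Upsilon$ implementing $\Id - HS(F)'HS(F) = d\Upsilon + \Upsilon d$, with the only new point being that $\Upsilon$ preserves the domain $\cD_{\cW}(d^{\sG(r)})$. One small slip: the homotopy $\Upsilon$ lives on $\hat M$ and must preserve $\cD_{\cW_M}(d^{\sG(r)})$, not map between the $\hat M$- and $\hat X$-domains as you wrote; and the paper also uses the grading involution $\eps u = (-1)^{|u|}u$ as the final ingredient (your condition (d)), which you should mention explicitly.
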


\begin{proof}
As in \S 9.4 of \cite{ALMP11}, let
\begin{equation*}
\begin{gathered}
	Q_X: L^2(X; \Lambda^* \Iie T^*X \otimes \sG(r)) \times L^2(X; \Lambda^* \Iie T^*X \otimes \sG(r)) \lra C^*_r\Gamma \\
	Q_X(u,v) = \int_X u \wedge v^*
\end{gathered}
\end{equation*}
and denote the adjoint of an operator $T$ with respect to $Q_X$ by $T'.$

From  \cite[Lemma 2.1]{Hilsum-Skandalis}, it suffices to show that:\\
a) $HS(F)d^{\sG(r)} = d^{\sG(F \circ r)} HS(F)$ and $HS(F)(\cD_{\cW}(d^{\sG(r)})) \subseteq \cD_{F^{\sharp}\cW}(d^{\sG(F \circ r)})$\\
b) $HS(F)$ induces an isomorphism 
$HS(F): \ker d^{\sG(r)} / \Im d^{\sG(r)} \lra \ker d^{\sG(F\circ r)} / \Im d^{\sG(F\circ r)}$\\
c) There is a bounded operator $\Upsilon$ on a Hilbert module associated to $\hat M,$ acting on $\cD_{\cW}(d^{\sG(r)}),$ such that
$\Id - HS(F)'HS(F) =  d^{\sG(r)} \Upsilon + \Upsilon d^{\sG(r)}$\\
d) There is a bounded involution $\eps$ on $\hat M,$ acting on $\cD_{\cW}(d^{\sG(r)})$ commuting with $\Id - HS(F)'HS(F)$ and anti-commuting with $d^{\sG(r)}.$ 

We have established ($a$) and ($b$) in Theorem \ref{thm:TwistedCohoIso}. 
The computations in the proof of Proposition 9.3 in \cite{ALMP11} show that  $\Id - HS(F)'HS(F) =  d^{\sG(r)} \Upsilon + \Upsilon d^{\sG(r)},$ with $\Upsilon$ constructed in \cite[Lemma 9.1]{ALMP11}, and we can check that it preserves $\cD_{\cW}(d^{\sG(r)}).$ Finally, for ($d$), it suffices to take $\eps u = (-1)^{|u|}u,$ on all forms $u$ of pure differential forms degree $|u|.$
\end{proof}

%%%%%%%%%%%%%%%%%%%%%%%%
\subsection{Bordism invariance of the analytic signature} \label{sec:Bordism} $ $\\
%%%%%%%%%%%%%%%%%%%%%%%%
We will define a bordism between stratified pseudomanifolds with self-dual mezzoperversities, that is Cheeger spaces.
The corresponding topological object was introduced by Banagl in \cite{BanaglShort} where it was denoted $\Omega_*^{SD},$ and was later considered by Minatta \cite{Minatta} where it was denoted $\mathrm{Sig},$ short for `signature homology.' 

\begin{definition}
If $M$ is a topological space, we denote by $\Sig{n}{M}$ the bordism group of four-tuples $(\hat X, g, \cW, r:\hat X \lra M)$ where $\hat X$ is an oriented Cheeger space of dimension $n,$ $\cW$ is a self-dual Hodge mezzoperversity, $g$ is an adapted $\iie$ metric, and $r:\hat X \lra M$ is a continuous map.

An admissible bordism between $(\hat X, g, \cW, r:\hat X\lra M)$ and $(\hat X',g',\cW', r':\hat X' \lra M)$ is a four-tuple $(\sX, G, \sW, R:\sX \lra M)$ consisting of:\\
i) a smoothly stratified, oriented, compact pseudomanifold with boundary $\sX,$ whose boundary is $\hat X \sqcup \hat X',$
and whose strata near the boundary are collars of the strata of $\hat X$ or $\hat X',$\\
ii) an $\iie$ metric $G$ on $\sX$ that near the boundary
is of the collared form $dx^2 +g$ or $dx^2 + g',$ \\
iii) an adapted self-dual mezzoperversity $\sW$ that extends, in a collared way, that of $\hat X$ and $\hat X',$\\
iv) a map $R: \sX \lra M$ that extends $r$ and $r'.$
\end{definition}

If $M$ is a point, we will leave off the trivial maps $r:\hat X \lra \{\pt\}$ from the descriptions of the elements of $\Sig{n}{\pt}.$

\begin{theorem}\label{thm:CobInvSign}
If $(\hat X,g,\cW)$ and $(\hat X',g',\cW')$ are $n$-dimensional and cobordant then the indices of the corresponding signature operators coincide,
\begin{equation*}
	\ind (\eth_{\sign}^{X}, \cD_{\cW}(\eth_{\sign}^X)) = \ind (\eth_{\sign}^{X'}, \cD_{\cW'}(\eth_{\sign}^{X'}))
\end{equation*}
and so this Fredholm index defines a map
\begin{equation*}
	\Sig{n}{\pt} \lra \bbZ.
\end{equation*}

Similarly, for any finitely generated discrete group, $\Gamma,$ the index class of the twisted signature operators defines a map
\begin{equation}\label{eq:ansymsign}
	\Sig{n}{B\Gamma} \lra \tK_n(C^*_r\Gamma; \bbQ)
\end{equation}
where the latter group is $\tK_n(C^*_r\Gamma) \otimes_{\bbZ} \bbQ.$
\end{theorem}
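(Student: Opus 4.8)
The plan is to prove bordism invariance by the classical ``half lives, half dies'' argument applied to the $L^2$-cohomology of the bordism, in exact parallel with Banagl's treatment of $\Omega^{SD}_*$. Additivity of both the $L^2$-signature and the higher index class under disjoint union is immediate, so the content is that both vanish on the boundary of an admissible bordism. Fix an admissible bordism $(\sX,G,\sW,R)$ between $(\hat X,g,\cW,r)$ and $(\hat X',g',\cW',r')$, so that $\pa\sX=\hat X\sqcup(-\hat X')$ and $\dim\sX=n+1$. The first step is to extend the Hodge/de Rham package of \cite{ALMP13.1} to the smoothly stratified pseudomanifold \emph{with boundary} $\sX$: at the singular strata one imposes the Cheeger ideal boundary conditions determined by $\sW$ exactly as in the closed case (this analysis is local and unchanged), while along the collar $[0,\eps)\times\pa\sX$ of the smooth boundary one imposes the usual absolute, resp.\ relative, boundary conditions; since $\sW$ and $G$ are of collared product form near $\pa\sX$, the two kinds of conditions do not interact. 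This produces closed Hilbert complexes $(\cD_{\sW,\mathrm{abs}}(d),d)$ and $(\cD_{\sW,\mathrm{rel}}(d),d)$ with finite-dimensional cohomologies $\tH^*_{\sW,\mathrm{abs}}(\sX)$, $\tH^*_{\sW,\mathrm{rel}}(\sX)$ and a long exact sequence of the pair $(\sX,\pa\sX)$ relating them to $\tH^*_{\cW}(\hat X)\oplus\tH^*_{\cW'}(\hat X')$.

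Next I would establish Poincar\'e--Lefschetz duality in this setting: the Hodge star of $G$ intertwines $d$ and $\delta$, carries $\sW$ to $\sD\sW=\sW$ by self-duality, and interchanges the absolute and relative conditions along $\pa\sX$, so it induces isomorphisms $\tH^k_{\sW,\mathrm{abs}}(\sX)\cong(\tH^{n+1-k}_{\sW,\mathrm{rel}}(\sX))^{*}$, with the pairing given by $\int_{\sX}\alpha\wedge\beta$. Combined with the long exact sequence of the pair, this is precisely the input for the ``half lives, half dies'' lemma: for $n$ even, the image $L$ of the restriction map $\tH^{n/2}_{\sW,\mathrm{abs}}(\sX)\to\tH^{n/2}_{\cW}(\pa\sX)$ is isotropic (Stokes: $\int_{\pa\sX}\bar a\wedge\bar b=\int_{\sX}d(a\wedge b)=0$ for closed $a,b$) and, by a dimension count through the long exact sequence and the duality, has exactly half the dimension, so $L=L^{\perp}$ with respect to the intersection form $Q$ on $\tH^{n/2}_{\cW}(\pa\sX)$. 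Hence $Q$ is metabolic and has signature $0$; since $Q$ on $\tH^{n/2}_{\cW}(\hat X\sqcup(-\hat X'))$ is the difference of the intersection forms of $\hat X$ and $\hat X'$, this gives $\sigma(\hat X,\cW)=\sigma(\hat X',\cW')$; when $n$ is odd the same conclusion follows by the analogous argument for the odd signature operator $-i(d\tau+\tau d)$ and its associated pairing. Invoking the equality, recorded above, of the Fredholm index of $\eth_{\sign}^{+}$ with the signature of \eqref{eq:NonDegQ} (and its odd analogue), we conclude $\ind(\eth_{\sign}^{X},\cD_{\cW}(\eth_{\sign}^{X}))=\ind(\eth_{\sign}^{X'},\cD_{\cW'}(\eth_{\sign}^{X'}))$, so the $L^2$-signature descends to a homomorphism $\Sig{n}{\pt}\to\bbZ$.

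For the higher statement I would run the identical argument with coefficients in $\sG(R)$, working with Hilbert $C^*_r\Gamma$-modules throughout; Theorem~\ref{thm:HigherSA} supplies the self-adjoint, $C^*_r\Gamma$-compactly included domains at the singular strata, and the collar boundary conditions are again standard. Poincar\'e--Lefschetz duality holds verbatim as an isomorphism of Hilbert $C^*_r\Gamma$-modules, and the $C^*_r\Gamma$-valued intersection form $Q_{\pa\sX}$ on the middle-dimensional twisted cohomology of $\pa\sX$ admits the Lagrangian $L$ coming from the cohomology of $\sX$; a nondegenerate symmetric form over $C^*_r\Gamma$ with a Lagrangian is hyperbolic and represents $0$ in the Witt group, hence its class in $\tK_n(C^*_r\Gamma)$ vanishes. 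By the twisted analogue of the Hodge--signature identity (proved as in the scalar case, using the decomposition of $\eth_{\sign}^{\sG(R)}$ with respect to $\tau$) this class equals $\Ind(\eth_{\sign,\cW}^{\sG(r)})-\Ind(\eth_{\sign,\cW'}^{\sG(r')})$; therefore the index class is a bordism invariant and additive, and defines the map $\Sig{n}{B\Gamma}\to\tK_n(C^*_r\Gamma;\bbQ)$. Only the rational statement is claimed, which sidesteps the $2$-torsion ambiguity in passing from Witt groups to $K$-theory.

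The main obstacle is the first step: establishing a well-behaved $L^2$-Hodge/de Rham theory --- in particular the long exact sequence of the pair and Poincar\'e--Lefschetz duality --- on the pseudomanifold \emph{with boundary} $\sX$ carrying the \emph{mixed} boundary conditions (Cheeger ideal boundary conditions at the singular strata together with absolute/relative conditions along the collar). One must check that the two families of conditions are compatible, that the resulting complexes are Fredholm, and that the Hodge star exchanges the absolute and relative conditions compatibly with $\sW=\sD\sW$; once this is in place, the remainder is the same linear algebra (over $\bbR$, resp.\ over $C^*_r\Gamma$) that proves bordism invariance of the signature classically.
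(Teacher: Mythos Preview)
Your approach is genuinely different from the paper's. The paper does not develop any Hodge theory on the bordism $\sX$ at all; instead it works entirely in $KK$-theory. From the semisplit short exact sequence $0\to\cC_0(\sX)\to\cC(\sX)\to\cC(\pa\sX)\to 0$ one gets a long exact sequence in $KK_*(-,C^*_r\Gamma)$, and the signature operator on $\sX$ with Cheeger ideal boundary conditions defines a class $[\eth_{\sign,\sW}^{\sX}]\in KK^1(\cC_0(\sX),C^*_r\Gamma)$ (no boundary condition is needed along $\pa\sX$ because functions in $\cC_0(\sX)$ vanish there). One then computes $\delta[\eth_{\sign,\sW}^{\sX}]=2[\eth_{\sign,\cW}^X]\oplus 2[-\eth_{\sign,\cW'}^{X'}]$ and uses $i_*\circ\delta=0$ together with the pushforward along the constant map to conclude; the factor of $2$ is why one lands in $\tK_n(C^*_r\Gamma;\bbQ)$. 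This is exactly the argument of \cite[\S 7]{ALMP11}, transported to the Cheeger setting.

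Your scalar ($\Gamma=\{1\}$, $n$ even) argument is a reasonable alternative route, and you correctly flag the main cost: one has to build the mixed absolute/relative $+$ Cheeger Hodge theory on $\sX$ and prove Poincar\'e--Lefschetz duality and the long exact sequence of the pair. That is real work not already in \cite{ALMP13.1}, but plausible.

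The higher case, however, has a genuine gap. Your Lagrangian/Witt-group argument presupposes that the twisted $L^2$-cohomology $\tH^{n/2}_{\cW}(\pa\sX;\sG(\pa R))$ is a finitely generated projective $C^*_r\Gamma$-module carrying a nondegenerate $C^*_r\Gamma$-valued form, and likewise for the absolute/relative cohomologies of $\sX$. Nothing in the paper (or in \cite{ALMP13.1}) establishes this: Theorem~\ref{thm:HigherSA} gives self-adjointness and $C^*_r\Gamma$-compact resolvent, but that does not force the range of $d^{\sG}$ to be closed or the cohomology to be projective. The index class $\Ind(\eth_{\sign,\cW}^{\sG(r)})$ is defined via an unbounded Kasparov bimodule, not as the class of a form on cohomology, and the identification of the two (the ``twisted Hodge--signature identity'' you invoke) is precisely what would need a Mishchenko--Fomenko type argument that is unavailable here. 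The paper's $KK$-theoretic proof avoids all of this by never mentioning the cohomology modules. A secondary issue: for $n$ odd the higher index lies in $K_1(C^*_r\Gamma)$ and there is no middle-degree intersection form, so the ``half lives, half dies'' picture does not even get started, whereas the $KK$ boundary-map argument treats even and odd $n$ uniformly.
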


We point out that the spaces $\Sig{n}{M}$ form an Abelian group (under disjoint union) and the analytic signature maps from 
this theorem are group homomorphisms.

\begin{proof}
We proceed as in \cite[\S 7]{ALMP11}. We shall only deal with the case where $n$ is even, the case $n$ odd being actually simpler.
Let $(\sX, G, \sW)$ be a bordism between $(\hat X,g,\cW)$ and $(\hat X',g',\cW'),$ where dim $ X= n$ ($X$ denotes the regular part of $\hat X$). Let $\cC_0(\sX)$ 
denote the vector space of continuous functions on $\sX$ which are zero on the boundary $\partial \sX$.
To the semisplit short exact sequence
$$
0 \rightarrow \cC_0(\sX) \rightarrow \cC(\sX)  \rightarrow \cC(\partial \sX) \rightarrow 0\,
$$ and the inclusion $i: \partial \sX \rightarrow \sX$, one associates the following long exact sequence
$$
\ldots \rightarrow KK^1(\cC_0(\sX), C^*_r \Gamma  ) \overset{\delta}\rightarrow KK^0 ( \cC(\partial \sX), C^*_r \Gamma ) 
 \overset{i_*}\rightarrow KK^0 ( \cC(\sX), C^*_r \Gamma ) \rightarrow \ldots
$$ Thus $ i_* \circ \delta =0$. First we apply this exact sequence to the case where $\Gamma =\{1\}$, 
$C^*_r \{1\} = \bbC$. 

Since by definition the elements of $\cC_0(\sX)$ vanish at $\partial \sX$, 
 we can proceed as in the proof of Theorem \ref{theo:k-homology} in order to show  that $\eth_{\sign,\sW}^{\sX}$ defines an unbounded 
Fredholm $(\cC_0(\sX), \bbC)$ bi-module and thus a class $[\eth_{\sign,\sW}^{\sX}] $ in 
$\tK\tK_1(\cC_0(\sX), \bbC)= \tK_1(\sX, \partial \sX).$
Since the metric and the self-dual mezzoperversities are ``collared" near the boundary of $\sX,$
the same proof as in \cite[\S 7]{ALMP11} applies to show that
\begin{equation*}
	\delta[\eth_{\sign,\sW}^{\sX}] = 2 [\eth_{\sign, \cW}^{X}] \oplus 2 [ -\eth_{\sign, \cW'}^{X'} ]
\end{equation*}
%%
%where $\delta$ is the map
%%
%\begin{equation*}
%	\delta: \tK_1(\sX, \partial \sX) \lra \tK_0(\partial \sX)
%\end{equation*}
%%
%in the long exact sequence induced by $i: \partial \sX \hookrightarrow \sX.$
Consider the constant map $\pi: \sX \rightarrow \{ pt \}$, we write its restriction 
to $\partial \sX$ under the form $\pi^\partial= \pi \circ i$. We have a natural map 
$$\pi^\partial_*: KK_0( C( \hat X) , \bbC) \rightarrow KK_0( \bbC , \bbC)\simeq \Z\,,$$ 
and similarly for $KK_0( C( \hat X') , \bbC)$.
The index of $\eth_{\sign, \cW}^{X}$ (or the signature of $(X,g,\cW)$) is then equal to 
$\pi^\partial_*( [\eth_{\sign, \cW}^{X}])$. Recall that $ i_*\delta=0$.
Therefore, the difference of twice the signatures of $(X,g,\cW)$ and $(X',g',\cW')$ is given by
\begin{equation*}
	\pi^{\pa}_* \left( 2  [\eth_{\sign, \cW}^{X}] \oplus 2 [ -\eth_{\sign, \cW'}^{X'} ] \right)
	= \pi_*^{\pa}\delta[\eth_{\sign,\sW}^{\sX}] 
	= \pi_*i_*\delta[\eth_{\sign,\sW}^{\sX}] =0\,.
\end{equation*}
Since $\bbZ$ has no torsion, we are done.

A similar argument \cite[\S 7]{ALMP11} works in the presence of a finitely generated discrete group, $\Gamma.$
Given $(\sX, G, \sW, R:\sX \lra B\Gamma),$ let $\pa\sW = \sW\rest{\pa \sX}$ and $\pa R = R\rest{\pa \sX}.$
We argue as above but now using that
$(\sX, G, \sW, R:\sX \lra B\Gamma)$ defines a class $[\eth_{\sign,\sW}^{\sG(R)}] \in \tK\tK^1(\cC_0(\sX), C^*_r\Gamma),$
the exactness of 
\begin{equation*}
	\tK\tK^1( \cC_{0}(\sX), C^*_r\Gamma) \xlra{\delta}
	\tK\tK^0( \cC(\pa \sX), C^*_r\Gamma) \xlra{i_*}
	\tK\tK^0(\cC(\sX), C^*_r\Gamma),
\end{equation*}
and the fact that $\delta[\eth_{\sign,\sW}^{\sG(R)}]  = [2\eth_{\sign, \pa\sW}^{\sG(\pa R)}].$
\end{proof}

%%%%%%%%%%%%%%%%%%%%%%%%
\subsection{The analytic signature does not depend on the mezzoperversity} \label{SignIndep} $ $\\
%%%%%%%%%%%%%%%%%%%%%%%%
In \cite{Banagl:LClasses}, Banagl gave a clever argument to show that the signature of a self-dual sheaf on an $L$-space (a topological 
version of a Cheeger space, see below and \cite{ABLMP}) is the same for every self-dual sheaf. We now show that the same argument 
works in the analytic setting, and we will later make use of this result to connect the topological and analytic signatures. In particular, 
we show that if $(\hat X, g, \cW)$ is a Cheeger space with a self-dual mezzoperversity $\cW$ and an adapted metric, then the 
analytic signature depends only on $\hat X$ and not on $g$ or $\cW.$

\begin{theorem}
Let $\hat X$ be a Cheeger space and let $(g, \cW)$ and $(g', \cW')$ be two pairs of self-dual Hodge mezzoperversities with 
adapted $\iie$-metrics. For any topological space, $M,$ and a map $r:\hat X \lra M,$ 
\begin{equation*}
	(\hat X, g, \cW, r:\hat X \lra M) \text{ is cobordant to } (\hat X, g', \cW', r:\hat X \lra M).
\end{equation*}
\end{theorem}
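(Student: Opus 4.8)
The plan is to transpose to the analytic setting the bordism construction of Banagl \cite{Banagl:LClasses}. The metric is a red herring: for a \emph{fixed} de Rham mezzoperversity $[\cW]$, any two adapted rigid and suitably scaled $\iie$-metrics $g_0,g_1$ on $\hat X$ can be joined by a path $g_s$ of such metrics, and then $(\hat X\times[0,1],\,dx^2+g_s,\,\cW,\,r\circ\mathrm{pr}_{\hat X})$ — with $\cW$ carried over to the strata $Y^i\times[0,1]$ by the evident transport of $W^i$, which is legitimate since by \cite[Theorem 5.9]{ALMP13.1} the domains $\cD_{\cW}(d)$, hence the vertical cohomology bundles, depend only on $[\cW]$ — is an admissible bordism from $(\hat X,g_0,\cW,r)$ to $(\hat X,g_1,\cW,r)$. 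It therefore suffices to exhibit, for two self-dual de Rham mezzoperversities $[\cW]$ and $[\cW']$ on $\hat X$, a Cheeger-space bordism between them over a reference map to $M$.

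I would then recall Banagl's geometric construction, organized as an induction on depth and reducing to the elementary move in which $\cW$ and $\cW'$ differ at a single non-Witt stratum $Y$; there, once the mezzoperversity $\cW(Z)$ on the link $Z$ of $Y$ has been fixed, the remaining datum is a Lagrangian in $\cH^{\mid}_{\cW(Z)}(Z)$ (identified via \eqref{eq:HodgeDeRham} with a subspace of $\tH^{\mid}_{\cW(Z)}(Z)$), and we are given two such Lagrangians $W_0,W_1$. The bordism is obtained from $\hat X\times[0,1]$ by a modification localized in a collar $\mathcal{T}_Y\times(\tfrac12-\eps,\tfrac12+\eps)\cong Y\times C(Z)\times(\tfrac12-\eps,\tfrac12+\eps)$: one introduces at $t=\tfrac12$ a new, deeper stratum $\cong Y$ whose link is the suspension $\Sigma Z$, the two halves of the former cone arc carrying the data $W_0$ for $t<\tfrac12$ and $W_1$ for $t>\tfrac12$. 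The suspension $\Sigma Z$ is again a Cheeger space, and the mezzoperversity on $\Sigma Z$ that restricts to $\cW(Z)$ over the equatorial part and assigns $W_0$ to one suspension point and $W_1$ to the other is self-dual \emph{precisely because} $W_0$ and $W_1$ are Lagrangian; this supplies the self-dual datum to attach along the new stratum. Away from this modification and from $\partial\sX=\hat X\sqcup\hat X$ the structure is the product one, $\sX$ is a smoothly stratified oriented pseudomanifold with boundary collared near $\partial\sX$, and it carries the evident map $R$ to $M$ extending $r\sqcup r$; iterating these elementary bordisms over the strata where $\cW$ and $\cW'$ differ produces the general case.

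The substance of the argument is the verification that the datum $\sW$ so obtained is an analytic self-dual mezzoperversity adapted to a suitable $\iie$-metric $G$ on $\sX$. I would take $G$ collared at $\partial\sX$ (equal there to $dx^2+g$ and $dx^2+g'$), rigid, and — after the usual rescaling on links — suitably scaled, so that at every stratum of $\sX$ the relevant link de Rham operator has no nonzero eigenvalue of modulus $<1$ and the asymptotic-expansion and Hodge/de Rham theory of \cite[\S\S 3--6]{ALMP13.1}, including the description of the dual mezzoperversity by the Hodge star of \cite[\S 6]{ALMP13.1}, is available everywhere. Running the induction on depth, one then checks that the flat subbundles prescribed by $\sW$ are flat subbundles of the correct vertical $L^2$-Hodge cohomology bundles $\cH^{f^i/2}_{(W^1,\ldots,W^{i-1})}(H^i_{\sX}/Y^i_{\sX})$ — matching Banagl's data with the \cite{ALMP13.1} Hodge/de Rham groups — and that $\sD\sW=\sW$, the only new verification being self-duality at the $\Sigma Z$-link stratum, which reduces to the Lagrangian property of $W_0,W_1$; collar compatibility of $(G,\sW)$ with $(g,\cW)$ and $(g',\cW')$ is immediate. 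The main obstacle I anticipate is exactly this translation: producing a single $\iie$-metric $G$ on $\sX$ that is simultaneously adapted to $\sW$ at \emph{all} strata while collared at both ends, and propagating the identification of the sheaf-theoretic self-dual structure with adapted analytic mezzoperversities consistently through the depth induction — the geometry is Banagl's, but feeding it into the machinery of \cite{ALMP13.1} on a space with boundary and corners is where the work lies.
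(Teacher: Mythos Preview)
Your geometric idea is Banagl's and is correct: take $\sX = \hat X \times [0,1]$, refine the stratification at $t=\tfrac12$, and use that the link at the new stratum is a suspension. But your organization into ``elementary moves'' changing the mezzoperversity at a single stratum $Y$ runs into a real obstacle: the mezzoperversity data are interdependent across depth. If $W^1 \neq (W^1)'$ at a depth-one stratum, then the very bundles $\cH^{f^j/2}_{(W^1,\ldots)}(H^j/Y^j)$ at deeper strata change with this choice, so there is no well-defined ``intermediate'' self-dual mezzoperversity agreeing with $\cW'$ at $Y^1$ and with $\cW$ elsewhere; the concatenation of elementary bordisms is therefore not well-posed as stated. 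Refining a single stratum $Y$ while leaving deeper strata as products also violates the frontier condition, since the closure of $Y \times [0,\tfrac12)$ meets but does not contain $Y^j \times [0,1]$ for $Y^j \subset \bar Y$.

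The paper's proof avoids all of this by refining \emph{every} singular stratum $Y^k$ into three pieces $Y^k\times[0,\tfrac12)$, $Y^k\times\{\tfrac12\}$, $Y^k\times(\tfrac12,1]$ \emph{simultaneously}. The link at each new middle stratum $Y^k\times\{\tfrac12\}$ is the suspension $SZ^k$, and the key observation --- which you circle around but do not state --- is that these middle strata are all \emph{Witt}: when $\dim Z^k$ is even (the non-Witt case for $\hat X$) the suspension is odd-dimensional so no choice is needed; when $\dim Z^k = 2j-1$, the suspension formula gives $I^{\bar m}H_j(SZ^k) = 0$. Thus no Lagrangian is required at any $Y^k\times\{\tfrac12\}$, and since the left-half strata are disjoint from the right-half strata one simply places the de Rham mezzoperversity $[\cW]$ on all left strata and $[\cW']$ on all right strata with no compatibility condition whatsoever. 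Your phrase ``self-dual datum to attach along the new stratum'' is therefore off target: there is no datum to attach. This also renders your separate treatment of the metric and your anticipated difficulty about a globally adapted $G$ superfluous --- any $\iie$ metric collared near $\pa\sX$ will do, after the usual rescaling.
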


\begin{proof}
Let us consider the pseudomanifold with boundary 
\begin{equation*}
	\sX = \hat X \times [0,1]_t.
\end{equation*}
Instead of the product stratification, let us stratify $\sX$ using the strata of $\hat X$ as follows:\\
i) The regular stratum of $\hat X$ contributes $X \times [0,1]$\\
ii) Every singular stratum of $\hat X,$ $Y^k,$ contributes three strata to $\sX,$
\begin{equation*}
	Y^k \times [0,1/2), \quad Y^k \times (1/2, 1], \quad Y^k \times \{1/2 \}.
\end{equation*}
Notice that the link of $\sX$ at $Y^k \times [0,1/2)$ and $Y^k \times (1/2, 1]$ is equal to $Z^k,$
while, a neighborhood of $Y^k \times \{1/2\}$ in $\sX$ fibers over $Y^k \times \{1/2\}$ with fiber
$\bbR \times C(Z^k),$ and so
\begin{equation*}
	\text{ the link of $\sX$ at $Y^k \times \{ 1/2 \}$ is the (unreduced) suspension of $Z^k,$ $S Z^k.$ }
\end{equation*}
The lower middle perversity intersection homology of $S Z^k,$ when $\dim Z^k = 2j-1,$ is given by
\cite[Pg. 6]{Friedman:Coeff}
\begin{equation*}
	I^{\bar m}H_i(S Z^k) = 
	\begin{cases}
		I^{\bar m}H_{i-1}(Z^k) & i>j \\
		0 & i=j \\
		I^{\bar m}H_i(Z^k) & i <k
	\end{cases}
\end{equation*}
so $\sX$ always satisfies the Witt condition at the strata $Y^k \times \{ 1/2 \}.$

Let us endow $\sX$ with any $\iie$ metric $G$ such that,
for some $t_0>0,$ 
\begin{equation*}
	G\rest{X \times [0,t_0)} = g + dt^2, \quad
	G\rest{X \times (1-t_0, 1]} = g' + dt^2.
\end{equation*}

Next we need to endow $\sX$ with a self-dual mezzoperversity $\sW.$
Let $Y^1, \ldots, Y^T$ be an ordering of the strata of $\hat X$ with non-decreasing depth.
Denote 
\begin{equation*}
	\cW = \{ W^1 \lra Y^1, \ldots, W^T \lra Y^T \}, \quad
	\cW' = \{ (W^1)' \lra Y^1, \ldots, (W^T)' \lra Y^T \}
\end{equation*}
and denote the fiber of, e.g., $W^j \lra Y^j$ at the point $q \in Y^j,$ by $W^j_q.$
Let us define
\begin{equation*}
	W^1_- \lra Y^1 \times [0,1/2)
\end{equation*}
by requiring that the Hodge-de Rham isomorphism identifies all of the fibers. Notice that the vertical $L^2$-de Rham cohomology is constant along the fibers of $H^1 \times [0,1/2) \lra Y^1 \times [0,1/2),$ by stratified homotopy invariance, so this condition makes sense and determines a flat, self-dual, vector bundle. (Where we are using that self-duality can be checked at the level of de Rham cohomology.) It may be necessary to scale the metric to make it compatible with $W^1_-,$ but this can always be done without changing the metric in a collar neighborhood of $\pa \sX,$ since the original metrics and mezzoperversities are initially adapted.
Once this is done, we can define $W^2_- \lra Y^2 \times [0,1/2)$ in the same way, and inductively define $W^3_- \lra Y^3 \times [0,1/2), \ldots, W^T_- \lra Y^T \times [0,1/2).$

We define $W^j_+ \lra Y^j \times (1/2 \times 1]$ in the same way to obtain
\begin{equation*}
	\sW = \{ W^1_- \lra Y^1 \times [0,1/2), W^1_+ \lra Y^1 \times (1/2, 1], \ldots, 
	W^T_- \lra Y^T \times [0,1/2), W^T_+ \lra Y^T \times (1/2, 1] \},
\end{equation*}
a self-dual mezzoperversity over $\sX.$

(In summary, we extend the metrics $g$ and $g'$ arbitrarily to an $\iie$ metric $G$ without changing them in collar neighborhoods of the boundary, and then we choose a Hodge mezzoperversity by extending the de Rham mezzoperversities trivially from $Y^i$ to $Y^i \times [0,1/2)$ on the left and from $Y^i$ to $Y^i \times (1/2, 1]$ on the right.)

Note the key point that since the strata induced by $Y^k \times [0,1/2)$ are disjoint from the strata induced by $Y^k \times (1/2,1],$ there is no compatibility required between the corresponding mezzoperversities.

Finally, define $R:\sX \lra M$ by $R(\zeta, t) = r(\zeta).$
The result is a cobordism
\begin{equation*}
	(\sX, G, \sW, R:\sX \lra M),
\end{equation*}
between $(\hat X,g,\cW, r:\hat X \lra M)$ and $(\hat X',g',\cW', r:\hat X \lra M).$
\end{proof}

This result suggests that we should define a seemingly coarser cobordism theory.
\begin{definition}
If $M$ is a topological space, we denote by $\Che{n}{M}$ the bordism group of 
pairs $(\hat X, r:\hat X \lra M)$ where $\hat X$ is an oriented Cheeger space and $r:\hat X \lra M$ is a continuous map smooth on $X.$

An admissible bordism between $(\hat X, r:\hat X\lra M)$ and $(\hat X', r':\hat X' \lra M)$ is a pair $(\sX, R:\sX \lra M)$ consisting of:\\
i) a smoothly stratified, oriented, compact pseudomanifold with boundary $\sX,$ whose boundary is $\hat X \sqcup \hat X',$
and whose strata near the boundary are collars of the strata of $\hat X$ or $\hat X',$\\
ii) a map $R: \sX \lra M$ that extends $r$ and $r'.$\\
We also require that $\sX$ is a `Cheeger space with boundary' in that it carries a self-dual mezzoperversity with 
a collared structure near its boundary.
\end{definition}

There is an obvious forgetful map $\cF:\Sig{n}{M} \lra \Che{n}{M}$ and the previous theorem shows that this map is an isomorphism.

\begin{corollary}\label{cor:LagIndep}
Every Cheeger space $\hat X$ has a well-defined analytic signature, $\sigma^{\an}(\hat X),$ equal to the index of the signature operator $(\eth_{\sign}, \cD_{\cW}(\eth_{\sign}))$ for any choice of self-dual mezzoperversity $\cW$ and adapted $\iie$ metric $g.$ The signature defines a homomorphism
\begin{equation*}
	\xymatrix @R=1pt
	{ \Che{n}{\pt} \ar[r]^-{\sigma^{\an}} & \bbZ \\
	[\hat X] \ar@{|->}[r] & \ind (\eth_{\sign}, \cD_{\cW}(\eth_{\sign})) }.
\end{equation*}

Moreover if $\Gamma$ is a finitely generated discrete group then for any smooth map $r:\hat X \lra B\Gamma$ there is a signature class in $\tK_0(C^*_r\Gamma;\bbQ)$ depending on no other choices. This signature class defines a group homomorphism
\begin{equation}\label{eq:ansymsign2}
	\xymatrix @R=1pt
	{ \Che{n}{B\Gamma} \ar[r]^-{\sigma^{\an}_{\Gamma}} & \tK_0(C^*_r\Gamma;\bbQ) \\
	[\hat X,r] \ar@{|->}[r] & \Ind(\eth_{\sign,\cW}^{\sG(r)}) }
\end{equation}
where $\cW$ is any self-dual mezzoperversity on $\hat X.$
\end{corollary}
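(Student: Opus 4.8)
The plan is to deduce the corollary formally from Theorem~\ref{thm:CobInvSign} together with the observation, recorded just above, that the forgetful map $\cF\colon \Sig{n}{M}\lra\Che{n}{M}$ is an isomorphism of abelian groups; its inverse sends $(\hat X,r)$ to $(\hat X,g,\cW,r)$ for any choice of self-dual Hodge mezzoperversity $\cW$ and adapted $\iie$ metric $g$ (which exist by Definition~\ref{def:mezzo}), the choice being immaterial by the theorem stated just before the definition of $\Che{n}{M}$. Composing $\cF^{-1}$ with the signature homomorphisms $\Sig{n}{\pt}\lra\bbZ$ and $\Sig{n}{B\Gamma}\lra\tK_n(C^*_r\Gamma;\bbQ)$ from Theorem~\ref{thm:CobInvSign} yields $\sigma^{\an}$ and $\sigma^{\an}_\Gamma$, and these are group homomorphisms because $\cF$ is a group isomorphism and the signature maps on $\Sig{n}{M}$ are homomorphisms.

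The one point requiring checking is well-definedness of the invariant in terms of $\hat X$ (and $r$) alone. If $(g,\cW)$ and $(g',\cW')$ are two adapted metric / self-dual mezzoperversity pairs, the theorem just before the definition of $\Che{n}{M}$ exhibits an admissible bordism in $\Sig{n}{M}$ between $(\hat X,g,\cW,r)$ and $(\hat X,g',\cW',r)$; applying Theorem~\ref{thm:CobInvSign} with $M=\pt$ shows their $L^2$-signatures coincide, so $\sigma^{\an}(\hat X)=\ind(\eth_{\sign},\cD_{\cW}(\eth_{\sign}))$ is a well-defined integer, equal to the value of the factored map on $[\hat X]\in\Che{n}{\pt}$. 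With $M=B\Gamma$ the same bordism, together with the invariance of the index class in Theorem~\ref{thm:CobInvSign}, shows $\Ind(\eth_{\sign,\cW}^{\sG(r)})$ is independent of $(g,\cW)$; and since continuously homotopic smooth maps $\hat X\lra B\Gamma$ are bordant via the cylinder $\hat X\times[0,1]$ (a Cheeger space with boundary), Theorem~\ref{thm:CobInvSign} also shows the class depends only on $[\hat X,r]\in\Che{n}{B\Gamma}$. For $n$ even $\tK_n(C^*_r\Gamma)=\tK_0(C^*_r\Gamma)$, matching the statement; the odd-dimensional case is identical with $\tK_1$ in place of $\tK_0$.

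I do not expect a genuine obstacle here: the essential geometric construction — the stratification of $\hat X\times[0,1]$ engineered so that the new middle strata satisfy the Witt condition — was already carried out in the theorem preceding the definition of $\Che{n}{M}$, and the present statement is merely a packaging of that fact with the cobordism invariance of the (higher) signature. What takes a little care is purely bookkeeping: confirming that disjoint union corresponds to the group law on both sides, that the twisted index class is additive and behaves correctly under composition with $r$ as used above, and that the rational coefficients (forced by the factor $2$ appearing in the computation of $\delta$ in the proof of Theorem~\ref{thm:CobInvSign}) together with the degree conventions are applied consistently; all of this has been established in the preceding subsections, so assembling them gives the corollary.
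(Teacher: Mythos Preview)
Your proposal is correct and follows precisely the approach the paper intends: the corollary is stated without proof in the paper because it is the immediate consequence of the isomorphism $\cF\colon\Sig{n}{M}\xrightarrow{\cong}\Che{n}{M}$ (established just before the corollary) composed with the bordism-invariant signature maps of Theorem~\ref{thm:CobInvSign}, exactly as you spell out. Your additional remarks on homotopic classifying maps and on the degree/parity conventions are accurate bookkeeping and not a departure from the paper's line of argument.
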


We can combine this with Theorem \ref{thm:StratHomInvSign} to see that if $\hat X$ and $\hat M$ are Cheeger spaces and they are stratified homotopically equivalent (smoothly or continuously, by Theorem \ref{thm:SmoothApp}), then 
\begin{equation*}
	\sigma^{\an}(\hat X) = \sigma^{\an}(\hat M) \Min \bbZ.
\end{equation*}
Also, if $(\hat M, r:\hat M \lra B\Gamma) \in \Che{n}{B\Gamma}$ and $F \in \CI_{b,cod}(\hat X, \hat M)$ is a stratified homotopy equivalence
then 
\begin{equation}\label{eq:FullInvSignature}
	\sigma^{\an}_{\Gamma}(\hat M, r) = \sigma^{\an}_{\Gamma}(\hat X, F \circ r) \Min K_*(C^*_r\Gamma).
\end{equation}

\begin{remark}
It would be interesting to find a purely topological definition of $\sigma^{\an}_{\Gamma}$ as in the work of Banagl \cite{Banagl:msri} and Friedman-McClure \cite{Friedman-McClure:Sym} for Witt spaces; by Corollary \ref{cor:BordismMagic} such a signature would coincide with $\sigma^{\an}_{\Gamma}$ over $\bbZ[\tfrac12].$
\end{remark}

%%%%%%%%%%%%%%%%%%%%%%%%
\section{The higher analytic signatures of a Cheeger space} \label{sec:HigherSign}
%%%%%%%%%%%%%%%%%%%%%%%%

%%%%%%%%%%%%%%%%%%%%%%%%
\subsection{The analytic L-class of a Cheeger space}
%%%%%%%%%%%%%%%%%%%%%%%%

Following Thom \cite{Thom:Espaces, Thom:Classes}, Goresky-MacPherson \cite{GM1}, and Banagl \cite{Banagl:LClasses} we define 
the (analytic) L-class of a Cheeger space as an element in homology (see also the work of Cheeger \cite{Cheeger:Spec} on Witt spaces). 
The results of the previous section are used to show that this is independent of choices and intrinsic to the Cheeger space.

By means of \cite{Goresky:Thesis, Teufel} we may identify a smoothly stratified space $\hat X$ with a Whitney stratified subset of 
$\bbR^N$ for some $N\gg1.$  Fixing such an identification, and following \cite[\S5.3]{GM1}, we say that a continuous map 
$f:\hat X \lra \bbS^k$ 
% $\hat X$ to the $k$-dimensional sphere
% %
% \begin{equation*}
% \end{equation*}
% %
is transverse if:
\begin{enumerate}
\item[a)] $f$ is the restriction of a $\CI$ map $\wt f:\cU \lra \bbS^k$ for some neighborhood $\cU$ of $\hat X$ in $\bbR^N,$\\
\item[b)] $\wt f$ is transverse to the north pole $\cN \in \bbS^k,$\\
\item[c)] $\wt f^{-1}(N)$ is transverse to each stratum of $\hat X.$
\end{enumerate}
\begin{lemma}\label{lem:L} 
Let $\hat X$ be a Cheeger space and $k \in \bbN.$\\
i) If $f: \hat X \lra \bbS^k$ is a transverse map then $f^{-1}(\cN)$ is naturally a Cheeger space.\\
ii) There is a unique map
\begin{equation*}
	\theta:[\hat X,\bbS^k] \lra \bbZ,
\end{equation*}
where $ [\hat X, \bbS^{ 2 q}]$ denotes the set of homotopy classes of continuous maps,
which assigns the number $\sigma^{\an}(f^{-1}(\cN))$ to each transverse map $f:\hat X \lra \bbS^k.$
\end{lemma}

\begin{proof} $ $

i) Because $\wt f^{-1}(\cN)$ is a smooth submanifold of $\bbR^N$ that is transverse to all strata of $\hat X,$ we know from \cite[\S1.11]{GM:Morse} that
$f^{-1}(\cN) = \wt f^{-1}(\cN)\cap \hat X$ is naturally stratified with strata 
\begin{equation*}
	\{ f^{-1}(\cN)\cap Y : Y \text{ is a stratum of } \hat X \}
\end{equation*}
and that the inclusion $f^{-1}(\cN) \hookrightarrow \hat X$ is `normally non-singular'.
In particular, this means that $f^{-1}(\cN)$ has a tubular neighborhood in $\hat X$ that can be identified with a neighborhood of the zero section of the normal bundle of $\wt f^{-1}(\cN)$ in $\bbR^N,$ restricted to $\hat X.$
As this normal bundle is trivial (since it is induced from the normal bundle of $\cN \in \bbS^k$), $f^{-1}(\cN)$ has a neighborhood in $\hat X$ of the form $U \times f^{-1}(\cN)$ (cf. \cite[Proof of Theorem 5.1]{Moscovici-Wu}).

The projection map $U \times f^{-1}(\cN) \lra f^{-1}(\cN)$ is stratum preserving (where $U \times f^{-1}(\cN)$ is stratified by restricting the stratification of $\hat X$) by \cite[Theorem 1.11(4)]{GM:Morse} and so $f^{-1}(\cN)$ naturally inherits Thom-Mather data from $\hat X,$ and the link of a point in $f^{-1}(\cN)$ coincides with its link in $\hat X$ (see \cite[Proposition 5.2]{Goresky:Whitney} for a much more general case).
It also follows that any $\iie$ metric on $\hat X$ restricts to an $\iie$ metric on $f^{-1}(\cN)$ and both of these metrics induce the same metric on each link $Z^k,$ and that any mezzoperversity $\cW$ on $\hat X$ induces a mezzoperversity on $f^{-1}(\cN).$ 
Since self-duality of $\cW$ can be checked using the metric on each link, it follows that self-dual mezzoperversities induce self-dual mezzoperversities.
Thus $f^{-1}(\cN)$ is a Cheeger space.\\

ii) As in \cite[\S 5]{GM1}, standard techniques show that every continuous map $\hat X \lra \bbS^k$ may be approximated by a transverse map in the same homotopy class, and that between any two such maps there is a transverse homotopy $H:\hat X \times [0,1]\lra \bbS^k.$ Note that, by the same arguments as above, $H^{-1}(\cN)$ is a Cheeger space with boundary. This proves that the map $\theta$ can (only) be defined by choosing any transverse representative $f:\hat X \lra \bbS^k$ in a homotopy class $[\hat X, \bbS^k]$ and assigning to it  the number $\sigma^{\an}(f^{-1}(\cN)).$

\end{proof}

If $2 q$ is an even integer such that $4q > n+1$, then
\begin{equation*}
	\tH^{2 q}(\hat X; \bbQ) \cong [\hat X, \bbS^{ 2 q}] \otimes \bbQ
\end{equation*}
where $ [\hat X, \bbS^{ 2 q}]$ denotes the set of homotopy classes of continuous maps,
and so the map $\theta$ from the lemma induces a map:
\begin{equation*}
	H^{2 q}(\hat X; \bbQ)  \lra \bbQ
\end{equation*}
and so a class in $H_{2q}(\hat X; \bbQ).$
If $4 q \leq n+1,$ we pick an integer $\ell > n+1$ such that $4q + 4\ell > n + \ell +1,$ and then by K\"unneth we have $\tH^{2 q+2l }(\hat X \times \bbS^{2\ell} ; \bbQ) \simeq \tH^{2 q}(\hat X; \bbQ).$ Note that $\hat X \times \bbS^{2\ell}$ is a Cheeger space (with the product stratification, as is the product of $\hat X$ with any smooth manifold).

\begin{definition} \label{def:L}
The L-class  $\cL(\hat X)$ of a Cheeger space $\hat X$
is the rational homology class $\cL(\hat X) \in \tH_{\mathrm{even}}( \widehat{X} ; \bbQ)$ defined in the following way.
For any even integer $2 q$ such that $4 q > n+1$ and any smooth map $F: \widehat{X} \rightarrow  \bbS^{ 2 q}$ 
transverse to the North pole $N$, 
\begin{equation*}
	\langle \cL(\hat X) , [F] \rangle = \sigma^{\an}(F^{-1}(N))
\end{equation*}
where $[F]$ denotes the associated cohomology class in $\tH^{2q}(\widehat{X} ; \bbQ )$ and $\sigma^{\an}(F^{-1}(N))$ is the signature of the Cheeger space $F^{-1}(N).$
If $4 q \leq  n+1$ then one considers $\hat X \times \bbS^{2\ell}$  for $\ell \gg 1$ as explained above.
\end{definition}

%%%%%%%%%%%%%%%%%%%%%%%%%%%%
\subsection{The Chern character and the L-class}
%%%%%%%%%%%%%%%%%%%%%%%%%%%%

Let $(\hat X, g, \cW)$ be a Cheeger space with a self-dual mezzoperversity and an adapted $\iie$ metric.
We have seen that the associated signature operator defines a class in $\tK$-homology,
\begin{equation*}
	[\eth_{\sign,\cW}] \in \tK_*(\hat X)
\end{equation*}
and we have defined an $L$-class of $\hat X$ in the rational homology of $\hat X.$
In this section we extend a result of Cheeger \cite{Cheeger:Spec} and Moscovici-Wu \cite{Moscovici-Wu} relating these two objects, namely the Chern character
\begin{equation*}
	\Ch: K_*(\hat X) \otimes \bbQ \lra H_*(\hat X, \bbQ)
\end{equation*}
takes one to the other. We start by establishing a preliminary result on finite propagation speed that will be useful in the proof.

The next theorem states that the finite speed property continues to holds in our setting, despite the fact 
that the ideal boundary conditions are not preserved by the multiplication by Lipschitz functions. 
Note that the distance associated to an $\iie$ metric satisfies 
$$
\mathrm{dist}(p, p') = \sup | f(p)- f(p') |
$$ where $f$ runs over the subset of elements of  $\CI(\hat X)$ such that $|| g ( d f_{| X} ) ||_\infty \leq 1$.
 
\begin{theorem} \label{thm:fps} Let $dist$ denote the distance associated with the $\iie$ metric $g$ and consider  the de Rham operator with domain corresponding to a mezzoperversity $\cW,$ $(\eth_{\dR}, \cD_{\cW}(\eth_{\dR})).$ 
There exists $\delta_0 >0$ such that for any real $t \in [-\delta_0, \delta_0]$:
$$
{\rm Supp}\,  ( e^{ i t \,\eth_{\dR}})\, \subset \{ (p, p') \in \widehat{X}\times \widehat{X}\, |\, 
dist(p; p') \leq | t |\, \} \,.
$$
\end{theorem}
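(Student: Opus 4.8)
The standard argument for finite propagation speed of wave operators (as in Chernoff's theorem and its extension by Cheeger-Gromov-Taylor) goes through energy estimates, but the usual proof localizes by multiplication by cutoff functions, which here runs into the obstruction that multiplication by a Lipschitz (or even smooth compactly supported) function need not preserve the Cheeger ideal boundary conditions defining $\cD_{\cW}(\eth_{\dR})$. The plan is therefore to exploit a feature we have already established: that $\cD_{\cW}(\eth_{\dR})$ is a \emph{local} domain in the sense of \eqref{eq:DmaxIsLocal}, i.e., $u \in \cD_{\cW}(\eth_{\dR}) \iff fu \in \cD_{\cW}(\eth_{\dR})$ for all $f \in \CI(\hat X)$. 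The distance formula quoted just before the statement, $dist(p,p') = \sup\{ |f(p)-f(p')| : f \in \CI(\hat X), \ \|g(df|_X)\|_\infty \le 1\}$, shows that it suffices to work with cutoffs drawn from $\CI(\hat X)$ rather than arbitrary Lipschitz functions, and these \emph{do} preserve the domain.

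\textbf{Key steps.} First I would fix $p_0, p_1 \in \hat X$ with $dist(p_0,p_1) > |t|$, and choose (by the distance formula) a function $f \in \CI(\hat X)$ with $\|g(df|_X)\|_\infty \le 1$ such that $f(p_1) - f(p_0) > |t|$; by shrinking to a neighborhood I may assume $f < c$ near $p_0$ and $f > c + |t|$ near $p_1$ for some constant $c$. Second, given $u_0 \in \CIc(X)$ supported near $p_0$, set $u(t) = e^{it\eth_{\dR}} u_0$ and, for a smooth real-valued function $\chi$ to be chosen, consider the local energy
\begin{equation*}
	\mathcal{E}(t) = \| \chi(f - c - |t| + |t|) \, u(t) \|_{L^2}^2,
\end{equation*}
more precisely a quantity of the form $\int_X \psi_t \, |u(t)|^2$ with $\psi_t$ a cutoff that is $1$ on $\{f \ge c + |t|\}$ and supported in $\{f \ge c\}$, moving inward as $|t|$ grows. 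Differentiating in $t$, the contribution from $\dot u = i\eth_{\dR} u$ produces, after using self-adjointness of $(\eth_{\dR}, \cD_{\cW}(\eth_{\dR}))$ — which is exactly where we need $\psi_t u(t) \in \cD_{\cW}(\eth_{\dR})$, guaranteed by locality of the domain — a commutator term that is Clifford multiplication by $d\psi_t$, hence bounded by $\|g(df|_X)\|_\infty \le 1$ times the mass of $u$ in the transition region; the explicit $-|t|$ in the argument of the cutoff is arranged so that the $\partial_t \psi_t$ term cancels or dominates this commutator. This yields $\mathcal{E}'(t) \le 0$, and since $\mathcal{E}(0) = 0$ (as $u_0$ is supported where $f < c$), we get $\mathcal{E}(t) = 0$, i.e., $u(t)$ vanishes near $p_1$ for $|t|$ small. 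A density argument over $u_0 \in \CIc$ and over base points then gives the support statement for the Schwartz kernel. The restriction to $|t| \le \delta_0$ enters because the cutoffs $\psi_t$ must remain supported in a single distinguished chart / collar where all of this is set up; one takes $\delta_0$ smaller than the injectivity-type scale on which the covering by distinguished neighborhoods and the partition of unity from $\CI(\hat X)$ behave well.

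\textbf{Main obstacle.} The genuinely delicate point is the integration by parts / self-adjointness step: one must verify that multiplying $u(t) \in \cD_{\cW}(\eth_{\dR})$ by the cutoff $\psi_t$ (and then by $d\psi_t$ in the commutator) keeps us inside the domain so that $\langle \eth_{\dR}(\psi_t u), \psi_t u\rangle = \langle \psi_t u, \eth_{\dR}(\psi_t u)\rangle$ with no boundary term at the singular strata. Locality of $\cD_{\cW}(\eth_{\dR})$ handles the \emph{membership} in the domain, but one still needs that the commutator $[\eth_{\dR}, \psi_t]$ is the bounded Clifford-multiplication operator by $d\psi_t$ with the correct $\iie$-norm bound — this is the content of the computation of $[\eth_{\dR}, f]$ recalled at the start of \S\ref{sec:TwistedDeRham}, applied with $f = \psi_t \in \CI(\hat X)$, and the crucial inequality $|[\eth_{\dR},\psi_t]| \le \|g(d\psi_t|_X)\|_\infty$ pointwise. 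Once that bound is in hand the energy inequality is routine. I would also remark that the same argument applies verbatim to the signature operator $\eth_{\sign}$ and to the twisted operators $\eth_{\dR}^{\sG(r)}$, since in each case the domain is local and the commutator with a function from $\CI(\hat X)$ is Clifford multiplication by its differential tensored with the identity on the coefficient module.
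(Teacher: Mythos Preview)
Your approach is correct and takes a genuinely different route from the paper's.

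The paper follows Hilsum's argument (his Lemma~1.10): given Lipschitz cutoffs $\phi,\psi$ with separated supports, it approximates them by $\phi_k,\psi_k\in\CI(\hat X)$ and then builds the auxiliary function
\[
	h_k(p)=\inf\bigl(|t|,\operatorname{dist}(p,\operatorname{supp}\phi_k)\bigr)\;\text{(modified on the supports of $\phi_k,\psi_k$)},
\]
which involves the \emph{distance function} and is therefore only Lipschitz, not a priori in $\CI(\hat X)$. The heart of the paper's proof is then a geometric lemma: for $|t|\le\delta_0$ the truncated distance function $h_k$ is constant in the link variable near each stratum (because the $\iie$ metric is bundlelike and geodesics orthogonal to the $Z$-fibers stay orthogonal), and hence multiplication by $h_k$ preserves $\cD_{\cW}$. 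This is where the restriction to $|t|\le\delta_0$ genuinely arises.

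Your energy-estimate argument instead uses only cutoffs of the form $\psi_t=\chi(f-c-t)$ with $f\in\CI(\hat X)$ and $\|df\|_g\le 1$, whose existence is guaranteed by the distance formula stated just before the theorem. Since $\CI(\hat X)$ is closed under smooth post-composition, $\psi_t\in\CI(\hat X)$, and locality of $\cD_{\cW}(\eth_{\dR})$ (established in \S\ref{sec:TwistedDeRham}) gives domain preservation directly; the self-adjointness and commutator bounds then yield $\tfrac{d}{dt}\|\psi_t u(t)\|^2\le 0$ exactly as you outline. This sidesteps the geometric lemma entirely.

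Two remarks. First, your explanation of $\delta_0$ is spurious: the cutoff $\psi_t$ is defined globally on $\hat X$ and need not sit in a single distinguished chart, so your argument actually proves finite propagation for \emph{all} $t\in\bbR$, which is stronger than the stated theorem. Second, the geometric content you appear to avoid is in fact hidden in the distance formula $\operatorname{dist}(p,p')=\sup\{|f(p)-f(p')|:f\in\CI(\hat X),\ \|df\|_g\le1\}$; the nontrivial direction (that the supremum is attained) requires producing $1$-Lipschitz functions in $\CI(\hat X)$, and this is essentially the same fiber-constancy observation that underlies the paper's lemma. The paper grants this formula as a ``Note'' before the theorem, so within the paper's logic your use of it is legitimate, but it is worth recognizing that the two proofs are closer in substance than they look.
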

\begin{proof} 
Because $(\eth_{\dR}, \cD_{\cW}(\eth_{\dR}))$ is self-adjoint, the operator $e^{ i t \,\eth_{\dR}}$ defines for any real $t$ 
a bounded operator on  $H=L^2(X;\Lambda^* \Iie T^*X)$ which preserves $\cD_{\cW}(\eth_{\dR})$.
We follow Hilsum and his proof of Corollary 1.11 in \cite{Hilsum}. We can assume $t>0$ at the expense 
of working with  $-\eth_{\dR}$ instead of $\eth_{\dR}$.
Consider two Lipschitz functions $\phi, \psi $ on $\hat X$ with compact support 
such that $ || g( d \phi_{| X} )||_\infty \leq 1$, $ || g( d \psi_{| X} )||_\infty \leq 1$ and 
$$
dist ({\rm supp}\,(  \phi  ); {\rm supp}\,(  \psi  ) ) > t \, .
$$
We can assume that both $\phi$ and $\psi$ are compactly supported in a distinguished neighborhood $\cU_q\cong [0,\eps_0) \times \bbB^h \times Z_q$ of a point $q$ on a singular stratum. 
%$\phi$ (resp. $\psi$) has compact support included both in the domain of a 
%stratified chart (cf \eqref{eq:chart}) and in a $\eps_0-$neighborhood of a  stratum 
%(see just after \eqref{TubNhd}). 
It is clear that $\phi$ (resp. $\psi$) is a  uniform limit of a sequence $(\phi_k)$ (resp. $(\psi_k)$) 
of elements of $\CI(\hat X)$ satisfying the same properties and having support in a fixed compact subset of $\cU_q.$
%included both in the domain of a 
%stratified chart  and in a $\eps_0-$neighborhood of a  stratum.
Following Hilsum we consider the function $h_k$ defined on $\widehat{X}$ by:
$$
h_k(p) = {\rm inf} ( | t| , dist ( p , {\rm supp}\, \phi_k ) )- \psi_k (p) + {\rm inf}\, \psi_k ,\; {\rm if}\, p \in {\rm supp}\, \psi_k  
$$
$$ 
h_k(p) = {\rm inf} ( | t| , dist ( p , {\rm supp}\, \phi_k ) )- \phi_k (p) + {\rm inf}\, \phi_k ,\; {\rm if}\, p \in {\rm supp}\, \phi_k  
$$
$$
h_k(p)= {\rm inf} ( | t| , dist ( p , {\rm supp}\, \phi_k ) )\; {\rm otherwise}\,.
$$
\begin{lemma} There exists $\delta_0 \in (0, \eps_0]$ such that for each  $t \in (-\delta_0, \delta_0)$ and each  $k\in \N^*$, each stratum has a neighborhood in which 
$h_k$ does not depend on the link variable. 
\end{lemma}
\begin{proof} In a distinguished neighborhood of a point in a singular stratum we have coordinates $(x,y,z)$ such that the metric $g$ has the form:
$$
g= (d x)^2 + \phi^*g_Y(y) + x^2 h_Z(y)\,,
$$ where a choice of a connection is understood. 
Observe that this metric is bundlelike and that the fibers in $Z$ define a Riemannian foliation. Let 
$$ t\rightarrow (x,y,z ;\tau, \xi, \theta)(t)
$$ denote a geodesic in  the domain of this chart, where $t \in [0, a]$. 
If its tangent vector at $t=0$ is orthogonal to the fiber in $Z$ then its projection onto the basis 
will be a geodesic with the same length. One then gets easily the Lemma.
\end{proof}
Therefore, the multiplication by each $h_k$ preserves the domain $\cD_{\cW}$, and an immediate extension of Lemma 1.10 of \cite{Hilsum} 
shows that $\psi_k e^{ i t  \,\eth_{\sign,\cW}} \phi_k =0$ as an operator acting on $H$. Moreover, the two  sequences 
of multiplication operators $(\psi_k)_{k\in \N}$, $(\phi_k)_{k\in \N}$ converge in $B(H)$ respectively  
to the multiplication operators defined by $\psi$ and $\phi$. One 
 then gets immediately 
$\psi e^{ i t  \,\eth_{\sign,\cW}} \phi =0$ which proves the result. 
\end{proof}

\begin{theorem} \label{thm:eps}
{\item 1) } The operator  $\eth_{\dR}$ with domain $\cD_{\cW}(\eth_{\dR})$ admits an $\eps-$local  parametrix $G_1$
such that  $\eth_{\dR} G_1  - Id $ and $G_1 \eth_{\dR}   - Id $ are trace class.

{\item 2)} For any $h\in \cS( \R)$, the operator $h( \eth_{\dR})$ is trace class. 
\end{theorem}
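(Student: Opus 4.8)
The plan is to deduce everything from two facts already available: the trace estimate \eqref{eq:trace} (which holds verbatim for $\eth_{\dR}$ in place of $\eth_{\sign}$, since the two operators agree in even dimensions and, in odd dimensions, have the same $\cD_{\cW}$-domain and the same leading symbol, so \cite[Theorem 6.6]{ALMP13.1} applies equally), and the finite propagation speed statement of Theorem \ref{thm:fps}. I would prove part (2) first and then use it inside part (1).

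\emph{Part (2).} For $h\in\cS(\bbR)$ I would simply factor
$h(\lambda)=\big(h(\lambda)(1+\lambda^2)^{n+1}\big)\cdot(1+\lambda^2)^{-(n+1)}$, so that by the functional calculus
$h(\eth_{\dR}) = \big[h(\cdot)(1+(\cdot)^2)^{n+1}\big](\eth_{\dR})\,(1+\eth_{\dR}^2)^{-(n+1)}$. The first factor is bounded (the bracketed function is again in $\cS(\bbR)$, in particular bounded) and the second factor is trace class by \eqref{eq:trace}; the composition of a bounded and a trace-class operator is trace class. In particular $e^{-\eth_{\dR}^2}$ is trace class (take $h(\lambda)=e^{-\lambda^2}$).

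\emph{Part (1).} Fix $\eps>0$; shrinking it we may assume $\eps\le\delta_0$, the constant of Theorem \ref{thm:fps}. Set $m(\lambda)=(1-e^{-\lambda^2})/\lambda$, a smooth bounded function that is a symbol of order $-1$: $m(\lambda)=\lambda^{-1}+O(\lambda^{-3})$ and $m^{(k)}(\lambda)=O(\langle\lambda\rangle^{-1-k})$. Since $\lambda m(\lambda)=1-e^{-\lambda^2}$, the functional calculus gives $\eth_{\dR}\,m(\eth_{\dR})=\Id-e^{-\eth_{\dR}^2}$, so $m(\eth_{\dR})$ is already a parametrix with trace-class error — the only missing feature is $\eps$-locality. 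To localize, choose $\theta\in\CIc(\bbR)$ even with $\theta\equiv1$ near $0$ and $\operatorname{supp}\theta\subset[-\eps,\eps]$, and split $\hat m=\theta\hat m+(1-\theta)\hat m$, i.e.\ $m=m_1+m_2$ with $\widehat{m_1}=\theta\hat m$, $\widehat{m_2}=(1-\theta)\hat m$. Because $m\in S^{-1}$, its Fourier transform $\hat m$ is smooth away from $0$, bounded, with at worst a jump at $0$, and rapidly decaying at infinity together with all derivatives; hence $\widehat{m_2}$ (which vanishes near $0$ and equals $\hat m$ outside $[-\eps,\eps]$) lies in $\cS(\bbR)$, so $m_2\in\cS(\bbR)$, while $\widehat{m_1}=\theta\hat m\in L^1$ is compactly supported in $[-\eps,\eps]$, so $m_1\in C_0(\bbR)$ and $\lambda m_1(\lambda)=(1-e^{-\lambda^2})-\lambda m_2(\lambda)$ is bounded. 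I would then set $G_1:=m_1(\eth_{\dR})$. From the Fourier inversion formula $G_1=\tfrac1{2\pi}\int\widehat{m_1}(t)\,e^{it\eth_{\dR}}\,dt$ and Theorem \ref{thm:fps} (applicable since $\operatorname{supp}\widehat{m_1}\subset[-\delta_0,\delta_0]$), $G_1$ spreads supports by at most $\eps$, i.e.\ it is $\eps$-local; boundedness of $\lambda m_1(\lambda)$ shows $G_1$ maps $L^2$ into $\cD_{\cW}(\eth_{\dR})$ and $\eth_{\dR}G_1=G_1\eth_{\dR}=(\lambda m_1)(\eth_{\dR})$ on $\cD_{\cW}(\eth_{\dR})$. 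Finally
$\eth_{\dR}G_1-\Id=(\lambda m_1)(\eth_{\dR})-\Id=-e^{-\eth_{\dR}^2}-(\lambda m_2)(\eth_{\dR})$,
and $G_1\eth_{\dR}-\Id$ equals the same operator; the first term is trace class by part (2), and $\lambda m_2\in\cS(\bbR)$, so $(\lambda m_2)(\eth_{\dR})$ is trace class by part (2) as well. This gives (1).

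The hard part is the localization step in (1): one must be sure that excising the Fourier-side singularity of the parametrix symbol $m$ near $t=0$ leaves a genuinely Schwartz remainder $m_2$, so that the non-local tail $(\lambda m_2)(\eth_{\dR})$ of the error is trace class. This is exactly where it matters that $m$ is a true symbol of order $-1$ (all derivatives decaying), which controls $\hat m$ and all its derivatives away from $t=0$; the rest is the abstract functional calculus together with the two inputs \eqref{eq:trace} and Theorem \ref{thm:fps}. (A minor point to record is that \eqref{eq:trace} was stated for $\eth_{\sign}$, but the same estimate holds for $\eth_{\dR}$, as noted above.)
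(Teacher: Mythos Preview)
Your proof is correct and follows the same overall strategy as the paper: write a functional-calculus parametrix as a Fourier integral in $e^{it\eth_{\dR}}$, cut off on the Fourier side to enforce $\eps$-locality via Theorem~\ref{thm:fps}, and show the discarded tail contributes only a trace-class error. The paper chooses the symbol $f(s)=s^{2n-1}/(1+s^{2n})$ while you use $m(\lambda)=(1-e^{-\lambda^2})/\lambda$; both are classical symbols of order $-1$, so the Fourier-side analysis (smoothness and rapid decay of $\hat m$ away from the origin, with at worst a jump at $0$) is the same in substance.

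The one genuine difference is how the trace-class property of the remainder is obtained. The paper shows that the non-local tail maps $L^2$ into $\bigcap_k \mathrm{Dom}(\eth_{\dR}^k)$ and then invokes the edge regularity result \cite[Theorem~4.5]{ALMP13.1} to place its range in the weighted Sobolev space $\rho^{1-\eps'}H^{2n}_{\iie}$, from which trace class follows. You instead prove part~(2) first, via the one-line factorization $h(\eth_{\dR})=\big[h(\cdot)(1+(\cdot)^2)^{n+1}\big](\eth_{\dR})\,(1+\eth_{\dR}^2)^{-(n+1)}$, and then observe that in part~(1) the tail $\lambda m_2(\lambda)$ is Schwartz, so its contribution is trace class by~(2). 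This is cleaner and more self-contained: it uses only the trace estimate~\eqref{eq:trace} and Theorem~\ref{thm:fps}, with no further appeal to the edge calculus. The paper's route, on the other hand, gives slightly more precise mapping information (range in weighted $\iie$-Sobolev spaces), which is not needed for the statement but is of independent interest.
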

\begin{proof} 
1) Note that \eqref{eq:trace} shows that  $G= \eth_{\dR}^{2n-1} (Id + \eth_{\dR}^{2n})^{-1}$ defines a parametrix 
for $\eth_{\dR}$ with compact remainder. Set for any real $s$,  $f(s)= \frac{s^{2n-1}}{1 + s^{2n}}$. 
We observe that $(i t)^{4n+1} \hat{f}(t)= \widehat{(\partial_s^{4n+1} f )}(t)$.
Therefore:
$$
G= \int_\R e^{ i t \eth_{\dR}} \hat{f}(t) \frac{d t}{  \sqrt{ 2 \pi}}\,. 
$$ Consider $\eps \in (0, \delta_0/2] $ and $\chi \in C^\infty( \R ; [0, 1])$ such that $\chi(t)=0$ for 
$t \in [-\eps , \eps]$ and $\chi(t)=1$ for 
$| t | \geq 2  \eps$.
Using integration by parts one checks easily that for any integer $k \geq 2$:
$$
\eth_{\dR}^k \int_\R e^{ i t \eth_{\dR}} (\chi(t) (i t) ^{-4n-1} ) \, ( i t)^{4n+1}  \hat{f}(t) \frac{d t}{  \sqrt{ 2 \pi}}
$$ is bounded on $L^2_{\iie}$. Now it follows from \cite[Theorem 4.5]{ALMP13.1} that
\begin{equation*}
	u, \eth_{\dR}^k u \in L^2(X; \Lambda^* \Iie T^*X) \implies u \in \rho^{1/2-\eps} H^k_{\iie}(X; \Lambda^* \Iie T^*X)
\end{equation*}
for any `total boundary defining function' $\rho$ and every $\eps>0,$ so one checks easily 
that $$
\int_\R e^{ i t \eth_{\dR}} \chi(t) \hat{f}(t) \frac{d t}{  \sqrt{ 2 \pi}}
$$ has range in $\cap_{\eps' >0} \,\rho^{1-\eps'} H^{2n}_{\iie}$. Therefore, using Theorem \ref{thm:fps} 
one sees that 
$$
G_1= \int_\R e^{ i t \eth_{\dR}} (1-\chi(t)) \hat{f}(t) \frac{d t}{  \sqrt{ 2 \pi}}
$$ does the job.

The proof of ($2$) is simpler than that of ($1$) and is left to the reader.
\end{proof} 
%We follow Hilsum and his proof of Corollary 1.11 in \cite{Hilsum}.
%Consider two Lipschitz functions $\phi, \psi $ on $\widehat{X}$ with compact support 
%such that $ || g( d \phi )||_\infty \leq 1$, $ || g( d \psi )||_\infty \leq 1$ and 
%$$
%d ({\rm support}\,(  \phi  ); {\rm support}\,(  \psi  ) ) > t \, .
%$$
%We can assume that  $\phi$ (resp. $\psi$) has compact support near a stratum. 
%It is clear that $\phi$ (resp. $\psi$) is a  uniform limit of a sequence $(\phi_k)$ (resp. $(\psi_k)$) 
%of elements of $\cA$ satisfying the same properties and having support in a fixed compact. 
%Following Hilsum we consider the function $h_k$ defined on $\widehat{X}$ by:
%$$
%h_k(p) = {\rm inf} ( | t| , d ( p , {\rm supp}\, \phi_k ) )- \psi_k (p) + {\rm inf}\, \psi_k ,\; {\rm if}\, p \in {\rm supp}\, \psi_k  
%$$
%$$ 
%h_k(p) = {\rm inf} ( | t| , d ( p , {\rm supp}\, \phi_k ) )- \phi_k (p) + {\rm inf}\, \phi_k ,\; {\rm if}\, p \in {\rm supp}\, \phi_k  
%$$
%$$
%h_k(p)= {\rm inf} ( | t| , d ( p , {\rm supp}\, \phi_k ) )\; {\rm otherwise}\,.
%$$

%Since each $h_k$ preserves the domain $\cD_{\cW}$, an immediate extension of Lemma 1.10 of \cite{Hilsum} 
%shows that $\psi_k e^{ i t  \,\eth_{\sign,\cW}} \phi_k =0$ as an operator acting on $H$. We then get immediately 
%$\psi e^{ i t  \,\eth_{\sign,\cW}} \phi =0$ which proves the result.
%\end{proof}

Theorems \ref{thm:fps} and \ref{thm:eps} allow to proceed as in Moscovici-Wu \cite{Moscovici-Wu} 
in order to prove the following
\begin{theorem}\label{thm:MoscWu}
The Chern character $\Ch: K_*(\hat X) \otimes \bbQ \lra H_*(\hat X, \bbQ)$ sends the
K-homology class of the signature operator $\eth_{\sign, \cW}$ to the analytic L-class in homology.
\end{theorem}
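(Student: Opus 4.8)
The plan is to follow the strategy of Moscovici--Wu \cite{Moscovici-Wu}, using Theorems \ref{thm:fps} and \ref{thm:eps} as the two analytic inputs that in their Witt-space argument were supplied by finite propagation speed and trace-class estimates for the signature operator. Recall the overall structure of their proof: one wants to evaluate the homology class $\Ch[\eth_{\sign,\cW}] \in \tH_*(\hat X;\bbQ)$ against a cohomology class, and by Definition \ref{def:L} it suffices to do this against classes of the form $[F] \in \tH^{2q}(\hat X;\bbQ)$ for $F:\hat X \lra \bbS^{2q}$ smooth and transverse to the north pole $N,$ with $4q>n+1$ (the case $4q \le n+1$ being handled by crossing with a sphere, exactly as in Definition \ref{def:L}, and noting $\hat X \times \bbS^{2\ell}$ is again a Cheeger space with the product self-dual mezzoperversity). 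So the heart of the matter is the identity
\begin{equation*}
	\langle \Ch[\eth_{\sign,\cW}], [F]\rangle = \sigma^{\an}(F^{-1}(N)).
\end{equation*}

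First I would reduce the pairing on the left to an index computation. The class $[F] \in \tH^{2q}(\hat X;\bbQ)$ is pulled back from the generator of $\tH^{2q}(\bbS^{2q};\bbQ)$; by the multiplicativity of the Chern character and the projection formula in $\tK$-homology, $\langle \Ch[\eth_{\sign,\cW}],F^*u\rangle$ equals $\langle \Ch(F_*[\eth_{\sign,\cW}]),u\rangle$, which is (a normalization of) the index of $F_*[\eth_{\sign,\cW}]$ paired against $\tK^0(\bbS^{2q})$, i.e. the index of $\eth_{\sign,\cW}$ twisted by the pullback $F^*\xi$ of a Bott-type virtual bundle $\xi$ on $\bbS^{2q}$ supported near $N.$ Here is where Theorem \ref{thm:fps} enters: finite propagation speed for $e^{it\eth_{\dR}}$ with the domain $\cD_{\cW}(\eth_{\dR})$ lets one localize the relevant parametrix and heat/wave operators to an arbitrarily small neighborhood of $F^{-1}(N),$ exactly as in \cite[\S 2]{Moscovici-Wu}; and Theorem \ref{thm:eps} provides the $\eps$-local trace-class parametrix $G_1$ needed to make the local index well-defined and to justify the trace manipulations. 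One then identifies this localized index with the index of the signature operator on a tubular neighborhood of $F^{-1}(N),$ which by transversality is diffeomorphic to a real vector bundle over $F^{-1}(N)$ of even rank $2q.$

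Second I would run the Thom-isomorphism / excision step. Over the tubular neighborhood $\cN \cong \nu$ the twisted signature operator decouples, by the standard Thom-form construction, into the signature operator of $F^{-1}(N)$ (with its induced self-dual mezzoperversity $\cW$, which is a Cheeger-space structure by Lemma \ref{lem:L}) tensored with an operator along the fibers whose index is $1.$ Because the mezzoperversity on $F^{-1}(N)$ is self-dual, its signature operator $(\eth_{\sign},\cD_{\cW}(\eth_{\sign}))$ is Fredholm with index $\sigma^{\an}(F^{-1}(N))$ by the discussion following \eqref{eq:NonDegQ}, and by Corollary \ref{cor:LagIndep} this number is independent of the choice of $\cW$ and of the adapted metric, so nothing in the construction (choice of $F$ within its homotopy class, choice of transverse representative, choice of $\iie$ metric and mezzoperversity on $\hat X$) affects the answer. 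Assembling: $\langle \Ch[\eth_{\sign,\cW}],[F]\rangle = \ind(\text{twisted }\eth_{\sign,\cW}) = \ind(\eth_{\sign}^{F^{-1}(N)}) = \sigma^{\an}(F^{-1}(N)),$ which by Definition \ref{def:L} is $\langle \cL(\hat X),[F]\rangle.$ Since cohomology classes of the form $[F]$ span $\tH^{2q}(\hat X;\bbQ)$ for $4q>n+1,$ and the lower-degree components are reached via the stabilization trick, $\Ch[\eth_{\sign,\cW}] = \cL(\hat X).$

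I expect the main obstacle to be the localization/decoupling step on the stratified space: in the smooth Moscovici--Wu argument one uses genuine finite propagation speed and the sharp off-diagonal decay of heat kernels, whereas here we only have the weaker statements of Theorems \ref{thm:fps} and \ref{thm:eps}, and the Thom form for the tubular neighborhood of $F^{-1}(N)$ must be chosen compatibly with the iterated fibration structure so that it pulls back $\iie$-forms to $\iie$-forms and does not disturb the ideal boundary conditions $\cD_{\cW}$ (this is precisely the kind of compatibility already engineered for $HS(F)$ in \S\ref{sec:HSmaps}). Checking that the induced mezzoperversity on the tubular neighborhood agrees, under the Thom isomorphism, with the product of $\cW$ on $F^{-1}(N)$ and the trivial structure along the fibers — so that the index genuinely splits off the factor $\sigma^{\an}(F^{-1}(N))$ — is the delicate point; everything else is a formal transcription of \cite{Moscovici-Wu} once Theorems \ref{thm:fps} and \ref{thm:eps} are in hand.
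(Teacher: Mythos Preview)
Your outline is correct and follows the same Moscovici--Wu strategy as the paper, but the technical implementation differs in a way worth noting. The paper follows \cite{Moscovici-Wu} more literally: it constructs an explicit Alexander--Spanier cycle $\Lambda_*\{t\eth_{\sign,\cW}\}$ from an idempotent $P(t\eth_{\dR})$ built out of Schwartz functions with compactly supported Fourier transform (this is where Theorems~\ref{thm:fps} and~\ref{thm:eps} enter), represents $F^*u$ by an Alexander--Spanier cocycle $\phi$ with support in a small neighborhood of $N$, and then uses finite propagation to transplant the pairing to the compact \emph{product} $\bbS^{2q}\times F^{-1}(N)$, equipped with an $\iie$ metric that agrees with that of $\hat X$ on the support of $F^*\phi$. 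On the product the multiplicative formula \cite[Lemma~4.3]{Moscovici-Wu} gives $\sigma^{\an}(F^{-1}(N))$ directly, with the correct power of $2$.

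Your route via $K$-theoretic pushforward, twisting by a Bott-type virtual bundle, and a Thom-isomorphism decoupling on the tubular neighborhood is valid in principle (the normal bundle of $F^{-1}(N)$ is trivial, being pulled back from $N\in\bbS^{2q}$), but it leaves two things implicit that the Alexander--Spanier machinery handles automatically: the normalization constants in the Chern character pairing, and the passage from the open tubular neighborhood with its inherited non-product $\iie$ metric to a compact model on which the operator genuinely decouples. The paper's transplantation-to-a-product step sidesteps exactly the obstacle you flag at the end---there is no need to check compatibility of a Thom form with the iterated fibration structure, because one simply works on $\bbS^{2q}\times F^{-1}(N)$ with the induced self-dual mezzoperversity from Lemma~\ref{lem:L}. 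Your approach buys conceptual clarity; the paper's buys a direct citation of the lemmas in \cite{Moscovici-Wu} with minimal modification.
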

\begin{proof} We follow closely \cite{Moscovici-Wu}  and describe  only the steps that 
could exhibit a (modest) new difficulty.  
Let $\overline{u} \in C^\infty(\R)$ be an even function such that the function 
$\overline{v}(x)= 1 - x^2 \overline{u}(x)$ is Schwartz and both $\overline{u}$ and $\overline{v}$ have 
Fourier transforms supported in $(-1/4 , 1/4)$. There are smooth functions $u, v$ such that
$$
u(x^2)= \overline{u}(x),\; v(x^2)= \overline{v}(x)\,.
$$ Note that $v$ is Schwartz and so is the function
$$
w(x)= \frac{1-v(x)^2}{x} v(x)\,.
$$
For each real $t>0$, Theorems \ref{thm:fps} and \ref{thm:eps} allow to define  the following idempotent $$
P(t \eth_{\dR})= \begin{pmatrix}
	(v( t^2\eth_{\dR}^2))^2 \tau &  w( t^2\eth_{\dR}^2) \cdot t \eth_{\dR}  \tau  \\
	-v( t^2\eth_{\dR}^2) \cdot t \eth_{\dR}  \tau & (v( t^2\eth_{\dR}^2))^2 \tau
	\end{pmatrix} \,,
	$$ where $\tau$ denotes the grading associated with the Hodge star acting on differential forms 
	on $X$.
	Then we define an Alexander-Spanier cycle $\Lambda_* \{ t \eth_{\sign,\cW} \} $ 
	by setting for any real $t>0$:
	$$
	\Lambda_* \{ t \eth_{\sign,\cW} \} (f^0\otimes \ldots \otimes f^{2q} )=
	$$
	$$
	\frac{ (2 \pi i)^q}{ q! (2q+1) 2} {\rm Tr}\, \bigl( \sum_{\sigma \in \Sigma_{2q+1}} {\rm sign}\, (\sigma) \,
	P(t \eth_{\dR}) f^{\sigma(0)} \ldots P(t \eth_{\dR}) f^{\sigma(2q)} \, \bigr)\,;
	$$ where $f^0,\ldots, f^{2q} \in C(\widehat{X}),$ $q>0$ and $\Sigma_{2q+1}$ is the permutation 
	group of $\{0, \ldots , 2q\}$.
	
	In the case $q=0$ we set:
	$$
	\Lambda_* \{ t \eth_{\sign,\cW} \} (f^0) = {\rm Tr}\, \bigl(  P(t \eth_{\dR}) f^{0} - \begin{pmatrix} 
	\frac{1 -\tau}{2} & 0 \\
	0 &  \frac{1 -\tau}{2} \end{pmatrix} f^0\, \bigr)
	$$
	Thus $ \Lambda_* \{ t \eth_{\sign,\cW} \}$ defines  a skew-symmetric measure on $X^{2q+1}$ 
	supported in the $| t |-$neighborhood of the diagonal.
For any real $t>0$, $\Lambda_* \{ t \eth_{\sign,\cW} \} $  represents 
the homology class 	$ {\rm Ch} [\eth_{\sign,\cW}]$ ( see \cite[Section 4]{Moscovici-Wu}).

We now follow very closely the proof of Theorem 5.1 of \cite{Moscovici-Wu}. 
Consider an even integer $2 q$ such that $4 q > n +1$. Consider a map $F: \widehat{X} \rightarrow \bbS^{2q}$
transverse to the north pole $N$.
There exists (as above) an open neighborhood $U$ of $N$ in $\bbS^{2q}$ such that 
$F^{-1} (U) \simeq U \times F^{-1}(N) $ and $F$ is isomorphic to the projection to $U$ 
when restricted to $F^{-1} (U) $. 

Let $u \in H^{2q} (\bbS^{2q} ; \R)$ be such that $\langle u ; [ \bbS^{2q}] \rangle =1$.
We can represent $u$ by an Alexander-Spanier cocycle $\phi$ compactly supported inside $U$ (actually 
this is why we use Alexander-Spanier cohomology). Then $F^*(\phi)$, which represents the cohomology class 
$F^*(u)$,  is supported inside  $F^{-1} (U) $.
We choose an incomplete conic iterated metric on $ \bbS^{2q} \times F^{-1}(N) $ which coincides 
with the metric on $\widehat{X}$ in a neighborhood of the (compact) support of $F^*(\phi)$. 
According to Lemma \ref{lem:L},  the stratified space $F^{-1}(N) $ (and thus also $ \bbS^{2q} \times F^{-1}(N) $)  admits 
a self-dual mezzoperversity $\wt\cW$ which is induced by the one of $\widehat{X}$.
Therefore we have a signature operator $\wt \eth_{\sign,\wt\cW}$ on $ \bbS^{2q} \times F^{-1}(N) $. 
An immediate extension of Lemma 4.2 of  \cite{Moscovici-Wu} shows, thanks to the finite propagation
speed property,  that for $t>0$ small enough:
$$
\langle \Lambda_* \{ t \eth_{\sign,\cW} \} ; F^* (u) \rangle = \langle \Lambda_* \{ t \wt \eth_{\sign,\wt\cW} \}; \pi^*(\phi)\rangle \,, 
$$ where $\pi: \bbS^{2q} \times F^{-1}(N) \rightarrow  \bbS^{2q}$ denotes the projection.

Now, we use the proof of Lemma 4.3 of \cite{Moscovici-Wu} (multiplicative formula).
We get:
$$
\langle \Lambda_* \{ t \wt \eth_{\sign,\wt \cW} \}; \pi^*(\phi)\rangle = 
 (\, {\rm ind}\, \eth^{F^{-1}(N)}_{\sign,\wt \cW} \,)  \,2^q \langle [\bbS^{2q}] , [u] \rangle\,,
$$ which proves the result.
\end{proof}

We point out that, in particular, this shows that the Chern character of the rational K-homology class $[\eth_{\sign,\cW}]$ is independent
of the metric and of the choice of $\cW$.

%\begin{corollary}
%The rational K-homology class $[\eth_{\sign,\cW}] \in \tK_*(\cC(\hat X)) \otimes \bbQ$ is independent
%of the metric and of the choice of $\cW$.
%\end{corollary}
%
%\begin{proof}
%The homology Chern character of Moscovici-Wu is a rational isomorphism (?) and the $L$-class is by construction (and Corollary \ref{cor:LagIndep}) independent of these choices.
%\end{proof}

%%%%%%%%%%%%%%%%%%%%%%%%
\subsection{Higher signatures of a Cheeger space and the Novikov conjecture}
\label{HigherSign} $ $\\
%%%%%%%%%%%%%%%%%%%%%%%%

\begin{definition}
Let $\hat X$ be a Cheeger space.
Let $\Gamma = \pi_1\hat X$ and $r:\hat X \lra B\Gamma$ a classifying map for the universal cover. The higher signatures of $\hat X$ are the collection of rational numbers
\begin{equation*}
	\lrbrac{ \ang{ \alpha, r_*\cL(\hat X) }: \alpha \in \tH^*(B\Gamma, \bbQ) }
\end{equation*}
\end{definition}

From Corollary \ref{cor:assembly}, the reduction of the Novikov conjecture for the higher signatures of $\hat X$ is 
the strong Novikov conjecture for $\pi_1 X$ exactly as in the classical case (see \cite[Theorem 11.1]{ALMP11}). 
Indeed, recall that the strong Novikov conjecture states that the assembly map 
\begin{equation*}
	\beta:\tK_*(B\pi_1 \hat X) \lra \tK_*(C^*_r\pi_1 \hat X)
\end{equation*}
is rationally injective.

\begin{theorem}
%Let $\hat X$ be a Cheeger space.
%If the strong Novikov conjecture is true for $\pi_1\hat X,$ then the higher signatures of $\hat X$ are stratified homotopy invariants.
%
Let $\hat X$ be a Cheeger space whose fundamental group $\pi_1\hat X$ satisfies the strong Novikov conjecture.
If $\hat M$ is a Cheeger space stratified homotopy equivalent to $\hat X$ (smoothly or continuously), then their higher signatures coincide. 
\end{theorem}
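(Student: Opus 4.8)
The plan is to run the analytic approach to the Novikov conjecture in exactly the form assembled in the preceding sections, chaining together the homotopy invariance of the higher index class, the assembly‐map identity of Corollary~\ref{cor:assembly}, and the identification of the Chern character of the signature $\tK$-homology class with the analytic $L$-class in Theorem~\ref{thm:MoscWu}. First I would normalize the data. By a corollary of Theorem~\ref{thm:SmoothApp}, a continuous stratified homotopy equivalence between $\hat X$ and $\hat M$ may be replaced by a smooth one, so we may take $F\in\CI_{b,cod}(\hat X,\hat M)$ to be a smooth codimension‐preserving stratified homotopy equivalence. As an ordinary homotopy equivalence $F$ induces an isomorphism $\pi_1\hat X\cong\pi_1\hat M=:\Gamma$; fixing a classifying map $r:\hat M\to B\Gamma$ for the universal cover of $\hat M$, the composite $r_X:=r\circ F$ is then a classifying map for the universal cover of $\hat X$. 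Since pairing against $\tH^*(B\Gamma;\bbQ)$ is faithful, it suffices to prove the homology identity
\begin{equation*}
	(r_X)_*\cL(\hat X) = r_*\cL(\hat M) \quad\text{in } \tH_{\even}(B\Gamma;\bbQ).
\end{equation*}

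Next, pick self-dual mezzoperversities $\cW_X$ on $\hat X$ and $\cW_M$ on $\hat M$ with adapted $\iie$ metrics (these exist since both spaces are Cheeger spaces). By Theorem~\ref{thm:MoscWu} we have $\Ch[\eth_{\sign,\cW_X}]=\cL(\hat X)$ and $\Ch[\eth_{\sign,\cW_M}]=\cL(\hat M)$, so by naturality of the Chern character the displayed identity follows from the rational $\tK$-homology identity
\begin{equation*}
	(r_X)_*[\eth_{\sign,\cW_X}] = r_*[\eth_{\sign,\cW_M}] \quad\text{in } \tK_*(B\Gamma)\otimes\bbQ.
\end{equation*}
To prove this, apply the assembly map $\beta:\tK_*(B\Gamma)\to\tK_*(C^*_r\Gamma)$. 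By Corollary~\ref{cor:assembly}, $\beta\big((r_X)_*[\eth_{\sign,\cW_X}]\big)=\Ind(\eth_{\sign,\cW_X}^{\sG(r_X)})$ and $\beta\big(r_*[\eth_{\sign,\cW_M}]\big)=\Ind(\eth_{\sign,\cW_M}^{\sG(r)})$, and these two index classes coincide in $\tK_*(C^*_r\Gamma)$ by \eqref{eq:FullInvSignature} (which itself rests on Theorem~\ref{thm:StratHomInvSign} together with the mezzoperversity‐independence of the signature, Corollary~\ref{cor:LagIndep}). Thus $\beta$ sends both $\tK$-homology classes to the same element; since $\pi_1\hat X$ satisfies the strong Novikov conjecture, $\beta$ is rationally injective, so the two classes agree in $\tK_*(B\Gamma)\otimes\bbQ$. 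Unwinding the reductions yields $(r_X)_*\cL(\hat X)=r_*\cL(\hat M)$, hence $\ang{\alpha,(r_X)_*\cL(\hat X)}=\ang{\alpha,r_*\cL(\hat M)}$ for all $\alpha\in\tH^*(B\Gamma;\bbQ)$, i.e. the higher signatures coincide.

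Essentially all of the genuine content has been established earlier, so the main obstacle here is bookkeeping rather than new mathematics: one must check that the chosen classifying maps are truly compatible through $F_*$ (so that $r\circ F$ really classifies the universal cover of $\hat X$), that the naturality statements for $\beta$ and for $\Ch$ are invoked with matching conventions, and—the one point deserving care—that \eqref{eq:FullInvSignature} applies verbatim, namely that $\hat X$ together with $r_X=r\circ F$ represents a class in $\Che{n}{B\Gamma}$ and that the self-dual mezzoperversity used to compute $\sigma^{\an}_{\Gamma}(\hat X,r_X)$ may be taken arbitrary, which is precisely Corollary~\ref{cor:LagIndep}. The sole substantive input beyond the earlier sections is the rational injectivity of $\beta$, which is exactly the hypothesis, so no further analysis is required.
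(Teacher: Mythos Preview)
Your proof is correct and follows essentially the same approach as the paper's own argument: reduce to the homology identity $(r_X)_*\cL(\hat X) = r_*\cL(\hat M)$, lift to rational $\tK$-homology via Theorem~\ref{thm:MoscWu}, use Corollary~\ref{cor:assembly} together with \eqref{eq:FullInvSignature} to see that the assembly map $\beta$ sends both classes to the same higher index, and conclude by rational injectivity of $\beta$. The only cosmetic difference is that the paper uses the pulled-back mezzoperversity $F^{\sharp}\cW_M$ on $\hat X$ rather than an arbitrary self-dual $\cW_X$, but as you correctly observe, Corollary~\ref{cor:LagIndep} makes this choice immaterial.
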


\begin{proof}
The claim is that if $\hat X$ and $\hat M$ are stratified homotopy equivalent Cheeger spaces with fundamental group $\Gamma,$ 
then 
\begin{equation*}
	\ang{ \alpha, r^X_*\cL(\hat X) }
	= \ang{ \alpha, r^M_*\cL(\hat M) }
	\Mforall \alpha \in \tH^*(B\Gamma, \bbQ)
\end{equation*}
where $r^M:\hat M \lra B\Gamma$ and $r^X:\hat X \lra B\Gamma$ are classifying maps for the universal covers. Equivalently, that
\begin{equation}\label{eq:NovikovEquiv}
	r^X_*\cL(\hat X)  = r^M_*\cL(\hat M) \Min \tH_*(B\Gamma; \bbQ).
\end{equation}

By Theorem \ref{thm:SmoothApp} there is a smooth stratified homotopy equivalence $F \in \CI_{b,cod}(\hat X, \hat M).$ 
We know from \eqref{eq:FullInvSignature} that 
\begin{equation*}
	\sigma^{\an}_{\Gamma}(\hat M, r) = \sigma^{\an}_{\Gamma}(\hat X, F \circ r),
\end{equation*}
and hence from Corollary \ref{cor:assembly} that for any self-dual mezzoperversity on $\hat M,$ $\cW_M,$ 
\begin{equation*}
	\beta(r^M_*[\eth_{\sign, \cW_M}]) = \beta((F \circ r^M)_*[\eth_{\sign,F^{\sharp}\cW_M}]).
\end{equation*}
Since we are assuming the strong Novikov conjecture holds for $\Gamma,$ this implies
\begin{equation*}
	r^M_*[\eth_{\sign, \cW_M}] = (F \circ r^M)_*[\eth_{\sign,F^{\sharp}\cW_M}].
\end{equation*}
Now taking Chern characters and using Theorem \ref{thm:MoscWu}, we see that
\begin{equation*}
	r^M_*\cL(\hat M) = (F \circ r^M)_*\cL(\hat X)   \Min \tH_*(B\Gamma; \bbQ).
\end{equation*}
But $F \circ r^M$ is a classifying map for the universal cover of $\hat X$ so this proves \eqref{eq:NovikovEquiv}.

\end{proof}

%%%%%%%%%%%%%%%%%%%%%%%%
\section{Relation with the topological signature} \label{sec:RefTop} $ $\\
%%%%%%%%%%%%%%%%%%%%%%%%
The analytic treatment of the {\em signature} operator developed in \cite{ALMP13.1} has a topological analogue developed earlier by Markus Banagl \cite{BanaglShort}, which has served as inspiration for the analytic development.
On an arbitrary {\em topologically} stratified non-Witt space (see \cite[Definition 4.1.1]{BanaglLong}), $\hat X,$ Banagl  defines a category $SD(\hat X)$ of `self-dual sheaves' as a way of extending Poincar\'e Duality.
In \cite{ABLMP} the authors, together with Banagl, develop a topological analogue of the analytic treatment of the $L^2$-cohomology from \cite{ALMP13.1}.

A topologically stratified space $\hat X$ is called an {\bf $L$-space} if $SD(\hat X) \neq \emptyset.$ In \cite[Proposition 4.3]{ABLMP} we show that a smoothly stratified space is a Cheeger space if and only if it is an $L$-space. Moreover in that case a self-dual mezzoperversity corresponds to Banagl's `Lagrangian structures.'
Every $L$-space $\hat X$ has a signature which we will denote
\begin{equation*}
	\sigma^{\mathrm{top}}(\hat X).
\end{equation*}
It is defined using a self-dual sheaf complex, but Banagl showed that it depends only on $\hat X.$ If $\hat X$ admits a smooth stratification then it also has the analytic signature defined above, 
\begin{equation*}
	\sigma^{\mathrm{an}}(\hat X).
\end{equation*}
We will show that these signatures coincide.\\

As mentioned above, our cobordism groups $\Sig{n}{\pt}$ are smooth analogues of topological groups first defined in \cite{BanaglShort}, which we will denote by $\Sigt{n}{\pt}.$ A class in this group is represented by a pair $(\hat X, \sc S)$ where $\hat X$ is an $n$-dimensional topologically stratified pseudomanifold and $\sc S$ is a self-dual sheaf complex. An admissible bordism is a compact oriented topologically stratified pseudomanifold with boundary $\sX$ together with a sheaf complex in $SD(\sX)$ which pushes to the given sheaf complexes over the boundary. Minatta has computed these groups \cite[Proposition 3.8]{Minatta} (see also \cite[\S 4]{Banagl:Computing})
by showing that the signature is a complete invariant,
\begin{equation}\label{eq:BanaglComp}
	\Sigt{n}{\pt}  \cong
	\begin{cases}
	\bbZ & \Mif n \text{ is a multiple of four}\\
	0 & \text{ otherwise }
	\end{cases}
\end{equation}

The proof of \eqref{eq:BanaglComp} is to note that if $\hat X$ is an $L$-space with vanishing signature, then the cone over $\hat X$ is  a compact oriented topologically stratified pseudomanifold with boundary and the self-dual sheaf over $\hat X,$ pulled-back to the interior of the cone, extends to a self-dual sheaf over the entire cone.
Thus the topological signature, which is obviously surjective since smooth manifolds are $L$-spaces, is also injective. In particular, every $L$-space with a non-zero signature is topologically cobordant to a smooth manifold with that signature.

Notice that we can carry out the same proof to compute the group $\Sig{n}{\pt},$ using either the analytic or topological signatures. Indeed, either of these signatures is surjective since smooth manifolds are Cheeger spaces.
The proof of injectivity extends to the smooth context because the cone over a smoothly stratified space admits a smooth stratification. So a Cheeger space with vanishing topological or analytic signature is smoothly nullcobordant. If either the topological or analytic signature is non-vanishing then the Cheeger space is smoothly cobordant to a smooth manifold with that signature. But since the smooth and topological signatures coincide on smooth manifolds, they must coincide on all Cheeger spaces. Therefore, one gets:

\begin{theorem}
The analytically defined signature of a Cheeger spaces coincides with its topological signature
\begin{equation*}
	\sigma^{\an}(\hat X)=
	\sigma^{\tp}(\hat X)
\end{equation*}
 and induces an isomorphism $\Sig{n}{\pt} \cong \Sigt{n}{\pt}.$
\end{theorem}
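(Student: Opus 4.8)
The plan is to compare the analytic and topological signatures through the bordism groups $\Sig{n}{\pt}$ and $\Sigt{n}{\pt}$, following the cone-filling argument of Banagl and Minatta to show that each of these signatures is a \emph{complete} bordism invariant, and then to invoke the classical equality of the smooth (Hirzebruch) and topological signatures on smooth manifolds. First I would record that $\sigma^{\an}$ descends to a homomorphism $\Sig{n}{\pt}\to\bbZ$ by Theorem \ref{thm:CobInvSign} and Corollary \ref{cor:LagIndep}, that $\sigma^{\tp}$ descends to a homomorphism $\Sigt{n}{\pt}\to\bbZ$ by Banagl's work, and that by \cite[Proposition 4.3]{ABLMP} a smoothly stratified space is a Cheeger space precisely when it is an $L$-space, with self-dual mezzoperversities corresponding to Banagl's Lagrangian structures; this provides a natural homomorphism $\Sig{n}{\pt}\to\Sigt{n}{\pt}$ intertwining the two signatures. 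Both signatures are surjective onto $\bbZ$ (in degrees divisible by four), since a closed oriented smooth manifold is a Cheeger space whose analytic and topological signatures each reduce to the Hirzebruch signature.

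The heart of the matter is injectivity. Given a Cheeger space $\hat X$ of dimension $n$ with $\sigma^{\an}(\hat X)=0$, I would form the cone $c\hat X$: because $c\hat X$ admits a smooth stratification, with the cone point as an additional stratum, it is an admissible null-bordism of $\hat X$ as soon as the self-dual mezzoperversity extends across $c\hat X$. The only obstruction to extending a Lagrangian/self-dual structure over the cone vertex is the signature of the intersection pairing \eqref{eq:NonDegQ} on $\tH_{\cW}^*(\hat X)$, which vanishes by hypothesis; transcribing Banagl's extension mechanism from \cite{BanaglShort, Banagl:LClasses} into the language of Cheeger ideal boundary conditions then produces the required self-dual mezzoperversity $\sW$ on $c\hat X$. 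Hence $[\hat X,g,\cW]=0$ in $\Sig{n}{\pt}$, so $\sigma^{\an}$, and by the identical cone argument $\sigma^{\tp}$, is injective; consequently $\Sig{n}{\pt}$ is $\bbZ$ in degrees divisible by four and $0$ otherwise, realized by smooth manifolds.

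To finish: an arbitrary Cheeger space $\hat X$ is cobordant in $\Sig{n}{\pt}$ to a closed smooth manifold $N$ with $\sigma^{\an}(N)=\sigma^{\an}(\hat X)$; applying the cobordism-invariant $\sigma^{\tp}$ gives $\sigma^{\tp}(\hat X)=\sigma^{\tp}(N)=\sigma(N)=\sigma^{\an}(\hat X)$, where the middle equalities are the classical coincidence of the smooth and topological signatures on the manifold $N$. Compatibility of the comparison map $\Sig{n}{\pt}\to\Sigt{n}{\pt}$ with these two signatures — now equal, and each an isomorphism onto $\bbZ$ (resp.\ onto $0$) — shows that the map is itself an isomorphism.

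\emph{The main obstacle} is the cone-extension step: one must check carefully that when $\sigma^{\an}(\hat X)=0$ a self-dual mezzoperversity on $\hat X$ really does extend to a self-dual mezzoperversity on the smoothly stratified cone $c\hat X$, i.e.\ that the signature of \eqref{eq:NonDegQ} is the sole obstruction. This requires combining the Hodge-theoretic description of $\cD_{\cW}(\eth_{\dR})$ and its dual from \cite{ALMP13.1} with Banagl's Lagrangian-extension argument, and making sure the smoothness of the stratification on $c\hat X$ survives the construction; everything else is a formal consequence of the bordism invariance already established.
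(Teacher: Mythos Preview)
Your proposal is correct and follows essentially the same route as the paper: both argue that $\sigma^{\an}$ and $\sigma^{\tp}$ are each complete invariants on $\Sig{n}{\pt}$ via the cone-filling argument (surjectivity from smooth manifolds, injectivity because the cone over a smoothly stratified space with vanishing signature is a smooth null-bordism), and then conclude equality by comparing to a smooth manifold where the two signatures classically agree. The ``main obstacle'' you flag---that the vanishing of the signature is precisely the obstruction to extending the self-dual mezzoperversity over the cone point---is exactly the point the paper absorbs into its appeal to Banagl's extension mechanism, so your caution there is well-placed but not a divergence.
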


We defined an analytic $L$-class above using the analytic signature of submanifolds transverse to the stratification.
There is also a topological $L$-class. Indeed, it follows from \cite{CSW} that every self-dual complex of sheaves has an $L$-class in homology which is a topological invariant.
Banagl showed that on an $L$-space, $\hat X,$ the $L$-class is independent of the choice of self-dual sheaf complex from $SD(\hat X)$ and is defined in the same way as we defined the analytic $L$-class  (Def. \ref{def:L}) but using the topological signature. Hence we have:

\begin{corollary}
The analytic $L$-class of a Cheeger spaces coincides with its topological $L$-class.
\end{corollary}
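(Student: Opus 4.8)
The plan is to deduce the corollary directly from the preceding theorem $\sigma^{\an}(\hat X)=\sigma^{\tp}(\hat X)$, by noting that the analytic and topological $L$-classes are assembled from exactly the same transverse-preimage signatures. Both $\cL^{\an}(\hat X)$ and the topological $L$-class $\cL^{\tp}(\hat X)$ lie in $\tH_{\mathrm{even}}(\hat X;\bbQ)$, so it suffices to check that they pair identically with every class $\alpha\in\tH^{2q}(\hat X;\bbQ)$, for all $q$.

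First I would treat the stable range $4q>n+1$. There $\tH^{2q}(\hat X;\bbQ)\cong[\hat X,\bbS^{2q}]\otimes\bbQ$, so $\alpha$ is represented by a smooth map $F:\hat X\lra\bbS^{2q}$ which, after a homotopy, may be taken transverse to the north pole $N$ --- precisely the setting of Definition \ref{def:L}. By construction $\ang{\cL^{\an}(\hat X),\alpha}=\sigma^{\an}(F^{-1}(N))$, and by the theorem of Banagl recalled above (building on the sheaf-theoretic $L$-class of \cite{CSW}) the topological $L$-class is built by the identical transversality recipe, so $\ang{\cL^{\tp}(\hat X),\alpha}=\sigma^{\tp}(F^{-1}(N))$. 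Now Lemma \ref{lem:L} tells us that $F^{-1}(N)$ inherits a smooth stratification together with a self-dual mezzoperversity, hence is itself a Cheeger space; applying the preceding theorem to $F^{-1}(N)$ in place of $\hat X$ gives $\sigma^{\an}(F^{-1}(N))=\sigma^{\tp}(F^{-1}(N))$, so the two pairings agree.

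For the unstable range $4q\le n+1$ I would stabilize exactly as in Definition \ref{def:L}: choose $\ell\gg1$ with $4q+4\ell>n+\ell+1$ and use the K\"unneth isomorphism $\tH^{2q+2\ell}(\hat X\times\bbS^{2\ell};\bbQ)\simeq\tH^{2q}(\hat X;\bbQ)$. Since $\hat X\times\bbS^{2\ell}$ is again a Cheeger space and both $L$-classes are, by their very definitions, compatible with this product stabilization, the unstable pairings reduce to stable pairings on $\hat X\times\bbS^{2\ell}$, already handled in the previous step. Combining the two ranges yields $\cL^{\an}(\hat X)=\cL^{\tp}(\hat X)$.

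This corollary is short: the substantive content was the preceding theorem $\sigma^{\an}=\sigma^{\tp}$. The only point needing care --- rather than a genuine obstacle --- is the definitional input that Banagl's topological $L$-class of an $L$-space really is computed by the same transverse-slice-signature formula as $\cL^{\an}$ (Def.\ \ref{def:L}), so that the identity of signatures can be fed in slice by slice; once this is granted, no further difficulty arises.
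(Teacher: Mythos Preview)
Your proposal is correct and follows exactly the approach of the paper. The paper's proof is essentially the one sentence preceding the corollary: Banagl's topological $L$-class ``is defined in the same way as we defined the analytic $L$-class (Def.~\ref{def:L}) but using the topological signature,'' so the equality $\sigma^{\an}=\sigma^{\tp}$ applied to each transverse preimage $F^{-1}(N)$ gives the result; you have simply unpacked this in more detail, including the stabilization step for the unstable range.
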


Minatta \cite[Theorem 3.4]{Minatta} has shown that $M \mapsto \Sigt{*}{M}$ is a multiplicative generalized homology theory (with the product induced by Cartesian product) and by adding a formal variable $t,$ the signature becomes an isomorphism of graded rings
\begin{equation*}
	\xymatrix @R=1pt{ 
	\Sigt{*}{\pt} \ar[r]^{\cong} & \bbZ[t] \\
	[\hat X] \ar@{|->}[r] & \sigma^{\tp}(\hat X) t^{\dim X/4} }
\end{equation*}
It is easy to see that $M \mapsto \Sig{*}{M}$ is also a multiplicative generalized homology theory. (Indeed, Minatta's proof holds in the smooth setting and even simplifies, as his main tool is a transversality theorem which is standard for smooth spaces but required a `very sophisticated argument' to establish in the topological category.)
Moreover, by Theorem 5.7 in \cite{ABLMP}, there is a  natural map $\Sig{*}{M} \lra \Sigt{*}{M}$ 
which induces  a natural transformation of multiplicative homology theories. Since we have seen that this map is an isomorphism when $M = \pt,$ we conclude (see e.g., \cite[Theorem 2.9]{Kono-Tamaki}):

\begin{corollary}
There is an isomorphism of multiplicative generalized homology theories (over $CW$-complexes)
\begin{equation*}
	\Sig{*}{-} \lra \Sigt{*}{-}.
\end{equation*}
\end{corollary}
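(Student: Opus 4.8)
The plan is to apply the standard comparison principle for multiplicative generalized homology theories: a natural transformation of such theories (over $CW$-complexes) which is an isomorphism on the point is an isomorphism on every space. All the ingredients are by now in place. First I would invoke the immediately preceding Theorem to get that the analytic and topological signatures of a Cheeger space agree and that the induced map $\Sig{n}{\pt} \lra \Sigt{n}{\pt}$ is an isomorphism. Next I would recall that $M \mapsto \Sigt{*}{M}$ is a multiplicative generalized homology theory, proved by Minatta \cite[Theorem 3.4]{Minatta}, and observe (as the excerpt already notes) that Minatta's argument applies \emph{mutatis mutandis} — indeed more simply — to show $M \mapsto \Sig{*}{M}$ is also a multiplicative generalized homology theory, the only nontrivial input being transversality, which is classical in the smooth category.

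The second step is to produce the natural transformation. This is provided by Theorem 5.7 of \cite{ABLMP}, which gives a natural map $\Sig{*}{M} \lra \Sigt{*}{M}$; naturality in $M$ and compatibility with the product structures is part of that statement, so this is a morphism of multiplicative homology theories. The third step is the coefficient comparison: by the preceding Theorem this morphism is an isomorphism when $M = \pt$, i.e., it induces an isomorphism on the coefficient rings $\Sig{*}{\pt} \xrightarrow{\ \cong\ } \Sigt{*}{\pt} \cong \bbZ[t]$.

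Finally I would conclude by the comparison theorem for homology theories in the form cited, e.g.\ \cite[Theorem 2.9]{Kono-Tamaki}: a natural transformation of (connective, or suitably bounded) generalized homology theories on $CW$-complexes that is an isomorphism on coefficients is an isomorphism on all $CW$-complexes. Since both $\Sig{*}{-}$ and $\Sigt{*}{-}$ satisfy the Eilenberg--Steenrod axioms (minus dimension) and are multiplicative, and the transformation of the previous paragraphs is an isomorphism on the point, it is an isomorphism in general. Hence $\Sig{*}{-} \lra \Sigt{*}{-}$ is an isomorphism of multiplicative generalized homology theories over $CW$-complexes, which is the assertion.

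The only point requiring any genuine care — and hence the main obstacle — is verifying that the hypotheses of the abstract comparison theorem actually hold: namely that $\Sig{*}{-}$ is a bona fide homology theory (exactness, excision, and the colimit/wedge axiom) rather than merely a bordism-type invariant. For this one leans on Minatta's construction, checking that each step of his verification of the Eilenberg--Steenrod axioms for $\Sigt{*}{-}$ transports to the smooth setting; the transversality arguments he needed the ``very sophisticated'' topological machinery for are here replaced by elementary smooth transversality, so no new difficulty arises. Once this bookkeeping is done, the conclusion is immediate from the cited comparison theorem.
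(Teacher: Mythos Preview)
Your proposal is correct and follows essentially the same approach as the paper: invoke the preceding Theorem for the isomorphism on a point, cite Minatta \cite[Theorem~3.4]{Minatta} for $\Sigt{*}{-}$ being a multiplicative homology theory and observe the same argument works in the smooth category, use \cite[Theorem~5.7]{ABLMP} for the natural transformation, and conclude via the comparison theorem \cite[Theorem~2.9]{Kono-Tamaki}. Your discussion of the ``only genuine care'' point---verifying the Eilenberg--Steenrod axioms for $\Sig{*}{-}$ by transporting Minatta's proof---matches the paper's parenthetical remark almost verbatim.
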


Let us recall  Minatta's  computation of  this homology theory with  coefficients in the following three cases:

a] For rational coefficients \cite[Proposition 4.2]{Minatta},
\begin{equation*}
	\Sigt{*}{M} \otimes_\bbZ \bbQ \cong \tH_*(M; \bbQ[t]) \,.
\end{equation*}

b] At odd primes \cite[Proposition 4.6]{Minatta}, \cite[Theorem 4.1]{Banagl:Computing},
\begin{equation*}
	\Sigt{*}{M} \otimes_{\bbZ} \bbZ[\tfrac12] \cong \Omega^{\mathrm{SO}}_*(M) \otimes_{\Omega^{\mathrm{SO}}_*(\pt)} \bbZ[\tfrac12][t],
\end{equation*}
where $\Omega^{\mathrm{SO}}_*$ denotes the smooth oriented bordism.

c]  At two \cite[Corollary 4.8]{Minatta},
\begin{equation*}
	\Sigt{*}{M} \otimes \bbZ_{(2)} \cong \tH_*(M;\bbZ_{(2)}[t]).
\end{equation*}

In particular, for any topological space $M$ there is a surjection \cite[Corollary 4.1]{Banagl:Computing}
\begin{equation*}
	\Omega^{\mathrm{SO}}_*(M) \otimes_{\bbZ} \bbZ[\tfrac12][t] \lra \Sigt{*}{M} \otimes_{\bbZ} \bbZ[\tfrac12].
\end{equation*}
This allows us to state a uniqueness result for the `analytic symmetric signature' of \eqref{eq:ansymsign}, \eqref{eq:ansymsign2}, $\sigma^{\an}_{\Gamma}(\cdot)$.
Note that, by the work of Kasparov and Mishchenko, for a smooth closed manifold $\Ind(\eth_{\sign}^{\sG(r)})$ coincides with the symmetric signature of Mishchenko.

\begin{corollary}\label{cor:BordismMagic}
Any homomorphism 
\begin{equation*}
	\Sig{*}{B\Gamma} \otimes_{\bbZ} \bbZ[\tfrac12] \lra K_*(C^*_r\Gamma; \bbQ)
\end{equation*}
that coincides with the symmetric signature of Mishchenko, tensored with $\bbZ[\tfrac12],$ for smooth manifolds
is equal to $\sigma^{\an}_{\Gamma}(\cdot) \otimes \bbZ[\tfrac12].$
\end{corollary}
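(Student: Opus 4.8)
The plan is to reduce the statement to the case of smooth closed oriented manifolds mapping to $B\Gamma$, using only the bordism-theoretic facts quoted just above; there is no further analytic content. First I would observe that $\sigma^{\an}_{\Gamma}\otimes\bbZ[\tfrac12]$ is itself a homomorphism of the required type: it is a group homomorphism by Corollary \ref{cor:LagIndep}, and by the work of Kasparov and Mishchenko recalled just before the statement it sends the class $[N,r]$ of a smooth closed oriented manifold $N$ with classifying map $r\colon N\to B\Gamma$ (a Cheeger space with no singular strata) to $\Ind(\eth_{\sign}^{\sG(r)})$, which is Mishchenko's symmetric signature of $(N,r)$. Hence it suffices to prove: if $\Phi,\Psi\colon \Sig{*}{B\Gamma}\otimes_{\bbZ}\bbZ[\tfrac12]\to K_*(C^*_r\Gamma;\bbQ)$ are group homomorphisms that agree on the classes $[N,r]$ of all smooth closed oriented manifolds, then $\Phi=\Psi$; applying this with $\Psi=\sigma^{\an}_{\Gamma}\otimes\bbZ[\tfrac12]$ yields the corollary.

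The target is a $\bbQ$-vector space, so $2$ acts invertibly on it, and consequently any group homomorphism from a $\bbZ[\tfrac12]$-module into it is automatically $\bbZ[\tfrac12]$-linear (from $2\,\Phi(\tfrac12 a)=\Phi(a)$ one recovers $\Phi(\tfrac12 a)=\tfrac12\Phi(a)$). Thus it is enough to check $\Phi=\Psi$ on a set of abelian-group generators of $\Sig{*}{B\Gamma}\otimes\bbZ[\tfrac12]$. Here I would invoke the isomorphism $\Sig{*}{-}\cong\Sigt{*}{-}$ of multiplicative generalized homology theories established just above, together with the quoted surjection $\Omega^{\mathrm{SO}}_*(B\Gamma)\otimes_{\bbZ}\bbZ[\tfrac12][t]\twoheadrightarrow \Sigt{*}{B\Gamma}\otimes_{\bbZ}\bbZ[\tfrac12]$, which is $\bbZ[t]$-linear for the module structure given by Cartesian product, with the degree-$4$ generator $t$ represented by $\mathbb{CP}^2$. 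An abelian-group generator $[N,f]\otimes\tfrac{1}{2^m}t^k$ of the source, with $N$ smooth closed oriented, therefore maps to $\tfrac{1}{2^m}\,[\,N\times(\mathbb{CP}^2)^{\times k},\, f\circ\mathrm{pr}\,]\otimes 1$, i.e.\ to a $\bbZ[\tfrac12]$-multiple of the class of a smooth closed oriented manifold equipped with a map to $B\Gamma$. Hence $\Sig{*}{B\Gamma}\otimes\bbZ[\tfrac12]$ is generated as a $\bbZ[\tfrac12]$-module by classes of smooth closed oriented manifolds, and since $\Phi$ and $\Psi$ agree on all such classes and are $\bbZ[\tfrac12]$-linear, $\Phi=\Psi$.

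The only step requiring care is the identification of the image of the quoted surjection with $\bbZ[\tfrac12]$-combinations of smooth-manifold classes — this is where Minatta's structure theorem and the $\bbZ[t]$-module/Cartesian-product formalism enter — together with the standard fact that the analytic higher signature restricts on smooth manifolds to Mishchenko's symmetric signature. I expect this bookkeeping, rather than any genuine difficulty, to be the main obstacle: the argument is essentially formal once $\Sig{*}{-}\cong\Sigt{*}{-}$, the surjectivity from oriented bordism inverted away from $2$, and the Kasparov–Mishchenko identification are all in hand.
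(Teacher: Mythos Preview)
Your proposal is correct and is precisely the argument the paper has in mind: the paper's own proof is a one-line reference to \cite[Proposition 11.1]{ALMP11}, whose content is exactly the bordism-theoretic reduction you spell out, namely that the surjection $\Omega^{\mathrm{SO}}_*(B\Gamma)\otimes_{\bbZ}\bbZ[\tfrac12][t]\twoheadrightarrow \Sig{*}{B\Gamma}\otimes_{\bbZ}\bbZ[\tfrac12]$ forces any two such homomorphisms to agree once they agree on smooth manifolds. You have simply unpacked the details the paper leaves implicit.
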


The proof is analogous to the corresponding statement for Witt spaces, \cite[Proposition 11.1]{ALMP11}.

%\bibliography{Wittless}
%\bibliographystyle{amsalpha}

\end{document}